\documentclass[11pt]{article}
\usepackage{amsmath,amsfonts,amssymb,amsthm}
\usepackage{eucal}
\usepackage{color}
\usepackage{makeidx}
\usepackage{multicol}
\usepackage{verbatim}

\topmargin -5mm
\textheight 215mm
\textwidth 140mm
\oddsidemargin 10mm
\evensidemargin 10mm

\def\N{{\mathbb N}}
\def\R{{\mathbb R}}
\def\P{{\mathbb P}}
\def\E{{\mathbb E}}
\def\I{{\mathbb I}}
\def\V{{\mathbb V\rm ar\,}}
\newcommand{\TR}{\widetilde{R}}
\newtheorem{theorem}{Theorem}
\newtheorem{lemma}[theorem]{Lemma} 
\newtheorem{corollary}[theorem]{Corollary}
\newtheorem{proposition}[theorem]{Proposition} 

\newtheorem{remark}[theorem]{Remark}
\makeindex

\title{Probabilistic Approach to Risk Processes\\ 
with Level-Dependent Premium Rate}

\author{Denis Denisov\footnote{University of Manchester, UK, 
denis.denisov@manchester.ac.uk},
Niklas Gotthardt\footnote{Augsburg University, Germany,
niklas.gotthardt@math.uni-augsburg.de},
Dmitry Korshunov\footnote{Lancaster University, UK, 
d.korshunov@lancaster.ac.uk},
and Vitali Wachtel\footnote{Bielefeld University, Germany, 
wachtel@math.uni-bielefeld.de}}

\begin{document}
\maketitle

\begin{abstract}
We study risk processes with level dependent premium rate. 
Assuming that the premium rate converges, as the risk reserve increases, 
to the critical value in the net-profit condition, 
we obtain upper and lower bounds for the ruin probability. 
In contrast to existing in the literature results, 
our approach is purely probabilistic and based on the analysis of Markov chains 
with asymptotically zero drift.

\vspace{5mm}

{\it AMS 2010 subject classifications}: Primary 91B05; secondary 60J20; 60F10

{\it Keywords and phrases}: 
Risk process, 
Cram\'er--Lundberg model, 
level-dependant premium rate,
heavy-tailed ruin probability,
transient Markov chain, 
down-crossing probabilities.
\end{abstract}

\section{Introduction}
\label{sec:intro}

\sectionmark{Risk processes}
\label{sec:risk}

In context of the collective theory of risk,
the classical {\it Cram\'er--Lundberg (Sparre Andersen) model}
is defined as follows. An insurance company receives
the constant inflow of premium at rate $v$, that is,
the premium income is assumed to be linear in time with rate $v$.
It is also assumed that the claims
incurred by the insurance company arrive according to
a homogeneous renewal process $N(t)$ with intensity $\lambda$
and the sizes (amounts) $\xi_n\ge 0$ of the claims
are independent copies of a random variable $\xi$ with finite mean.
The $\xi$'s are assumed independent of the process $N(t)$.
The company has an initial risk reserve $x=R(0)\ge0$.
Then the risk reserve $R(t)$ at time $t$ is equal to
\begin{eqnarray*}
R(t) &=& x+vt-\sum_{i=1}^{N(t)}\xi_i.
\end{eqnarray*}
The probability
\begin{eqnarray*}
\P\{R(t)\ge 0\mbox{ for all }t\ge0\}
&=& \P\Bigl\{\min_{t\ge 0}R(t)\ge 0\Bigr\}
\end{eqnarray*}
is the probability of ultimate survival and
\begin{eqnarray*}
\psi(x) &:=& \P\{R(t)<0\mbox{ for some }t\ge0\}\\
&=& \P\Bigl\{\min_{t\ge 0}R(t)<0\Bigr\}
\end{eqnarray*}
is the probability of ruin.\index{Ruin probability}
\index{Cram\'er--Lundberg!classical model!ruin probability} 
We have
\begin{eqnarray*}
\psi(x) &=& \P\Bigl\{\sum_{i=1}^{N(t)}\xi_i-vt>x
\mbox{ for some }t\ge0\Bigr\}.
\end{eqnarray*}
Since $v>0$, the ruin can only occur at a claim epoch. Therefore,
\begin{eqnarray*}
\psi(x) &=& \P\Bigl\{\sum_{i=1}^n\xi_i-vT_n>x
\mbox{ for some }n\ge1\Bigr\},
\end{eqnarray*}
where $T_n$ is the $n$th claim epoch, so that
$T_n=\tau_1+\ldots+\tau_n$ where the $\tau_k$'s are independent copies
of a random variable $\tau$ with finite mean $1/\lambda$, so that
$N(t):=\max\{n\ge 1:T_n\le t\}$.
Denote $X_i:=\xi_i-v\tau_i$ and $S_n:=X_1+\ldots+X_n$, then
\begin{eqnarray*}
\psi(x) &=& \P\Bigl\{\sup_{n\ge 1}S_n>x\Bigr\}.
\end{eqnarray*}
This relation represents the ruin probability problem
as the tail probability problem for
the maximum of the associated random walk $\{S_n\}$.
Let the {\it net-profit condition}
\begin{eqnarray}\label{net-profit}
v &>& v_c\ :=\ \E\xi/\E\tau\ =\ \lambda\E\xi
\end{eqnarray}
hold, thus $\{S_n\}$ has a negative drift: 
$\E S_1=\E\xi_1-v\E\tau<0$. 
Hence by the strong law of large numbers
$S_n\to-\infty$ a.s., so $\psi(x)\downarrow 0$ as $x\to\infty$.

The most classical case is when the distribution of $X_1$ 
satisfies the following well-known Cram\'er condition: there exists a $\beta>0$ such that
\begin{eqnarray}\label{def.beta}
\E e^{\beta X_1} &=& 1.
\end{eqnarray}
Under this condition, the sequence $e^{\beta S_n}$ is a martingale and, 
by the Doob maximal inequality, the following Lundberg's inequality
\begin{equation}
\label{eq:Cramer_bound}
\psi(x)=\P\left\{\sup_{n\ge1}e^{\beta S_n}>e^{\beta x}\right\}
\le e^{-\beta x},\quad x>0.
\end{equation}
If we additionally assume that 
$$
\E X_1e^{\beta X_1}<\infty
$$
and the distribution of $X_1$ is non-lattice,
then the Cram\'er--Lundberg approximation holds, that is, 
there exists a constant $c_0\in(0,1)$ such that 
\begin{equation}
\label{eq:Cramer_asymp}
\psi(x)\sim c_0e^{-\beta x}\quad\text{as }x\to\infty,
\end{equation}
see e.g. Theorem VI.3.2 in Asmussen and Albrecher \cite{AA};
in the lattice case $x$ must be taken as a multiple of the lattice step.
The most important feature of these results is the fact that the upper bound 
\eqref{eq:Cramer_bound} and the asymptotic relation 
\eqref{eq:Cramer_asymp} depend on the distribution of $X_1$ only via the parameter $\beta$. 
If the moment condition \eqref{def.beta} on the distribution of $X_1$ does not hold 
then the tail asymptotics for $\psi(x)$ are typically determined by the tail of the claim size $\xi$. 
The most prominent situation is when the distribution of $\xi$ is of subexponential type. 
We discuss this case in more detail later.

The risk models with non-constant premium rates have also 
become rather popular in the collective risk literature.
There are two main approaches, one of them leads to
a Markovian model when 
the premium rate is a function of the current level of the risk reserve 
$R(t)$, see e.g. Asmussen and Albrecher \cite[Chapter VIII]{AA},
Albrecher et al. \cite{A_all}, Boxma and Mandjes \cite{BM}, 
Czarna et al. \cite{CPRY},
Marciniak and Palmowski et al. \cite{MZ}. 
The second approach considers the premium rate that depends on the whole claims history,
see e.g. Li et al. \cite{LNC}.

In this paper we follow the first approach and consider a risk process where the premium
rate $v(y)$ only depends on the current level of risk reserve $R(t)=y$,
so $R(t)$ satisfies the equality
\begin{eqnarray}\label{risk.1}
R(t) &=& x+\int_0^t v(R(s))ds-\sum_{j=1}^{N(t)}\xi_j;
\end{eqnarray}
$v(y)$ is assumed to be a measurable non-negative bounded function.
The probability of ruin given initial risk reserve $x$
is again denoted by $\psi(x)$. 
Notice that $\psi(x)$ is no longer a decreasing function of $x$
as it is in the classical case.

The ruin probability for such processes with level dependent premium rate 
is much less studied in the literature than with constant premium rate, 
and all known results are exact expressions for some particular distributions of $\tau$, 
$\xi$ and/or for particular choices of the rate function $v(y)$.
The first example of the risk process where $\psi(x)$
is explicitly calculable is the case of exponentially distributed
$\tau$ and $\xi$, say with parameters $\lambda$ and $\mu$ respectively,
so hence $v_c=\lambda/\mu$.
In this case, for some $c_0\in(0,1)$,
\begin{eqnarray}\label{psi.x.formula}
\psi(x) &=& c_0\int_x^\infty \frac{1}{v(y)}
\exp\Bigl\{-\mu y+\lambda\int_0^y\frac{dz}{v(z)}\Bigr\}dy\nonumber\\
&=& c_0\int_x^\infty \frac{1}{v(y)}
\exp\Bigl\{\lambda\int_0^y\Bigl(\frac{1}{v(z)}-\frac{1}{v_c}\Bigr)dz\Bigr\}dy,
\end{eqnarray}
provided the outer integral is convergent from $0$ to infinity,
see, e.g. Corollary 1.9 in Albrecher and Asmussen \cite[Ch. VIII]{AA}. 
Some further examples of solutions in closed form can be found in Albrecher et al \cite{A_all}. 
The authors of that paper use a purely analytical approach, 
which works however only in the situations where the Laplace transforms of $\xi$ 
and $\tau$ are rational functions.

The main goal of our paper is to develop a probabilistic method of the asymptotic analysis 
of risk processes with level-dependent premium rate, 
which is not based on exact calculations and uses only moment 
and tail conditions on $\xi$ and $\tau$. 
The following two qualitatively different cases can be identified:
\begin{eqnarray}\label{vyvinfty}
v(y) &\to& v_\infty\ >\ v_c\quad\mbox{as }y\to\infty;\\
v(y) &\to& v_c\quad\mbox{as }y\to\infty.\label{risk.appr}
\end{eqnarray}
In the first case \eqref{vyvinfty} one could expect that the ruin probability $\psi(x)$
decays similarly to the classical collective risk model with constant premium rate $v_\infty$.
In this paper we concentrate on a more difficult case \eqref{risk.appr}
where the ruin is more likely due to the approaching the critical
premium rate. 

We start by discretising the time; this procedure is standard for risk processes
with constant rate.
Since the ruin can only occur at a claim epoch,
the ruin probability may be reduced to that for
the embedded Markov chain $R_n:=R(T_n)$, $n\ge 1$, $R_0:=x$, that is,
\begin{eqnarray*}
\psi(x) &=& \P\{R_n<0\mbox{ for some }n\ge0\}.
\end{eqnarray*}
So, our main goal is to analyse the down-crossing probabilities for the chain $\{R_n\}$.
In contrast to the constant premium case, we deal with a Markov chain instead 
of a random walk with independent increments.

As mentioned above, we shall restrict our attention to the case \eqref{risk.appr} 
where $v(y)$ approaches the critical value $v_c$ at infinity.
Then the Markov chain $\{R_n\}$ has asymptotically zero drift, that is,
\begin{eqnarray}\label{Mcwazd}
\E\{R_1-R_0\mid R_0=x\} &\to& 0\quad\mbox{as }x\to\infty,
\end{eqnarray}
see Theorem \ref{non.exp.return} below. The study of Markov chains
with vanishing drift was initiated by Lamperti in a series of papers
\cite{Lamp60, Lamp62, Lamp63}. For further development in Lamperti's problem,
see Menshikov et al \cite{AMI, MPW16}.
We also show in Theorem \ref{non.exp.return} that under \eqref{Mcwazd} 
the ruin probability decays slower than any exponential function, that is, for any $\lambda>0$,
\begin{eqnarray*}
e^{\lambda x}\psi(x) &\to& \infty\quad\mbox{as }x\to\infty.
\end{eqnarray*}

As well-known, for the classical Cram\'er--Lundberg model under 
the net-profit condition \eqref{net-profit}, 
the ruin probability decays slower than any exponential function 
if and only if the claim size tail distribution is so.
As just mentioned, risk processes under the condition \eqref{risk.appr} 
give rise to heavy-tailed ruin probabilities whatever the distribution of the claim size, 
even if it is a bounded random variable. 
So, risk processes with near critical premium rate provide an important example 
of a stochastic model where light-tailed input produces heavy-tailed output.

We want to investigate how the rate of convergence 
in \eqref{risk.appr} is reflected in how slowly the ruin probability $\psi(x)$ decreases.
Let us get some intuition on what kind of phenomena we could expect here by considering
the case of exponentially distributed $\xi$ and $\tau$. As we have mentioned above, 
the ruin probability $\psi(x)$ is given in this case by \eqref{psi.x.formula}.
Combining \eqref{psi.x.formula} and \eqref{risk.appr}, we obtain
\begin{eqnarray*}
\psi(x) &\sim& \frac{c_0}{v_c}\int_x^\infty
\exp\Bigl\{\lambda\int_0^y\Bigl(\frac{1}{v(z)}-\frac{1}{v_c}\Bigr)dz\Bigr\}dy
\quad\mbox{as }x\to\infty.
\end{eqnarray*}

If the premium rate $v(z)\ge v_c$ approaches $v_c$ at the rate of $\theta/z$,
$\theta>0$, more precisely, if
\begin{eqnarray}\label{risk.2}
\Bigl|v(z)-v_c-\frac{\theta}{z}\Bigr| &\le& p(z)\quad\mbox{for all }z>1,
\end{eqnarray}
where $p(z)>0$ is an integrable at infinity decreasing
function, then we get
\begin{eqnarray*}
\frac{1}{v(z)} &=&
\frac{1}{v_c}-\frac{\theta}{v_c^2z}+O(p(z)+1/z^2)
\end{eqnarray*}
and consequently
\begin{eqnarray*}
\lambda\int_0^y\Bigl(\frac{1}{v(z)}-\frac{1}{v_c}\Bigr)dz
&=& -\frac{\theta\mu^2}{\lambda}\log y +c_1+o(1)\quad\mbox{as }y\to\infty,
\end{eqnarray*}
where $c_1$ is a finite real. Let $\theta>\lambda/\mu^2$.
Then, for $C:=c_0e^{c_1}/(\theta\mu-\lambda/\mu)>0$,
\begin{eqnarray}\label{risk.exp}
\psi(x) &\sim& \frac{C}{x^{\theta\mu^2/\lambda-1}}\quad\mbox{as }x\to\infty.
\end{eqnarray}
A similar asymptotic equivalence can be obtained also in the case where the
Laplace transforms of variables $\xi_1$ and $\tau_1$ are rational functions,
see Albrecher et al. \cite{A_all}.

If the premium rate $v(z)$ approaches $v_c$ at a slower rate of $\theta/z^\alpha$,
$\theta>0$ and $\alpha\in(0,1)$, more precisely, if
\begin{eqnarray}\label{risk.2.hx}
\Bigl|v(z)-v_c-\frac{\theta}{z^\alpha}\Bigr| &\le& p(z)\quad\mbox{for all }z>1,
\end{eqnarray}
where $p(z)>0$ is an integrable at infinity decreasing function, then we get
\begin{eqnarray*}
\frac{1}{v(z)} &=&
\frac{1}{v_c}\sum_{j=0}^\infty \Bigl(-\frac{\theta}{v_c}\Bigr)^j
\frac{1}{z^{\alpha j}}+O(p(z)).
\end{eqnarray*}
Let $\gamma:=\min\{k\in\N:k\alpha>1\}$. Then
\begin{eqnarray*}
\frac{1}{v(z)} &=&
\frac{1}{v_c}\sum_{j=0}^{\gamma-1} \Bigl(-\frac{\theta}{v_c}\Bigr)^j
\frac{1}{z^{\alpha j}}+O(p_1(z)),
\end{eqnarray*}
where $p_1(z)=p(z)+z^{-\gamma\alpha}$ is integrable at infinity.
Consequently, if $1/\alpha$ is not integer, then
\begin{eqnarray*}
\lambda\int_0^y\Bigl(\frac{1}{v(z)}-\frac{1}{v_c}\Bigr)dz
&=& \frac{\lambda}{v_c} \int_1^y
\sum_{j=1}^{\gamma-1} \Bigl(-\frac{\theta}{v_c}\Bigr)^j
\frac{1}{z^{\alpha j}}dz+c_2+o(1)\\
&=& \frac{\lambda}{v_c}
\sum_{j=1}^{\gamma-1} \Bigl(-\frac{\theta}{v_c}\Bigr)^j
\frac{y^{1-\alpha j}}{1-\alpha j}+c_3+o(1)\quad\mbox{as }y\to\infty,
\end{eqnarray*}
where $c_3$ is a finite real because $p_1(x)$ is integrable.
In the case of integer $1/\alpha$ one has
\begin{eqnarray*}
\lambda\int_0^y\Bigl(\frac{1}{v(z)}-\frac{1}{v_c}\Bigr)dz
&=& \frac{\lambda}{v_c}
\sum_{j=1}^{\gamma-2} \Bigl(-\frac{\theta}{v_c}\Bigr)^j
\frac{y^{1-\alpha j}}{1-\alpha j}\\
&&\hspace{20mm}+
\frac{\lambda}{v_c} \Bigl(-\frac{\theta}{v_c}\Bigr)^{\gamma-1}\log y
+c_4+o(1)\quad\mbox{as }y\to\infty.
\end{eqnarray*}

Let, for example, $\alpha\in(1/2,1)$. Then
\begin{eqnarray*}
\lambda\int_0^y\Bigl(\frac{1}{v(z)}-\frac{1}{v_c}\Bigr)dz
&=& -\frac{\theta\mu^2}{\lambda(1-\alpha)} y^{1-\alpha} +c_3+o(1)\quad\mbox{as }y\to\infty.
\end{eqnarray*}
Therefore, for $C_1:=c_0e^{c_3}/\theta\mu>0$ and $C_2:=\theta\mu^2/\lambda(1-\alpha)>0$,
\begin{eqnarray}\label{risk.exp.new}
\psi(x) &\sim& C_1x^\alpha e^{-C_2x^{1-\alpha}}\quad\mbox{as }x\to\infty.
\end{eqnarray}

We are going to extend these results to not necessarily exponential distributions 
where there are no closed form expressions like \eqref{psi.x.formula} available for $\psi(x)$. 
In that case we can only derive lower and upper bounds for $\psi(x)$
which have the same decay rate at infinity.

\section{Heavy-tailedness of the ruin probability in the critical case}
\label{sec:heavy}

Denote the jumps of the chain $\{R_n=R(T_n)\}$ by $\xi(x)$,
that is,
$$
\P\{\xi(x)\in B\}=\P\{R_1-R_0\in B\mid R_0=x\}
$$
for all Borel sets $B$.
The dynamics of the risk reserve between two consequent claims
is governed by the differential equation $R'(t)=v(R(t))$.
Let $V_x(t)$ denote its solution with initial value $x$, so then
\begin{eqnarray*}
V_x(t) &=& x+\int_0^t v(V_x(s))ds.
\end{eqnarray*}
Therefore,
\begin{eqnarray*}
\xi(x) &=_{\rm st}& V_x(\tau)-x-\xi\ =\ \int_0^\tau v(V_x(s))ds-\xi,
\end{eqnarray*}
where $=_{\rm st}$ stands for the equality in distribution.

To avoid trivial case where $\psi(x)=0$ for all sufficiently large $x$,
we assume that $\psi(x)>0$ for all $x$.
As the function $\psi(x)$ is not necessarily decreasing, 
we need in the sequel a stronger condition: for all $x_0$,
\begin{eqnarray}\label{cond:Rgr0}
\inf_{x\le x_0} \psi(x) &>& 0.
\end{eqnarray}
A sufficient condition for that is that, for all $x_0$, 
there exists an $\varepsilon=\varepsilon(x_0)>0$ such that
\begin{eqnarray}\label{cond:Rgr01}
\P\{\xi(x)\le-\varepsilon\} &\ge& \varepsilon\quad\mbox{for all }x\le x_0.
\end{eqnarray}
In its turn, it is sufficient to assume that the random variable 
$\xi$ is unbounded, due to the inequality $\xi(y)\le \overline v\tau-\xi$ valid for all $y$,
where $\overline v:=\sup_{z>0} v(z)$.

\begin{theorem}\label{non.exp.return}
Let $v_c=\E\xi/\E\tau$ and let $v(x)\to v_c$ as $x\to\infty$. 
Then the chain $\{R_n\}$ has asymptotically zero drift, that is, \eqref{Mcwazd} holds true.

If, in addition, \eqref{cond:Rgr0} holds true, then,
for all $\lambda>0$, $e^{\lambda x}\psi(x)\to\infty$ as $x\to\infty$.
\end{theorem}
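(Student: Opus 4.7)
For the first assertion I would exploit the identity $\xi(x)=_{\rm st}\int_0^\tau v(V_x(s))\,ds-\xi$ from the text. Taking expectations and using Fubini (the trajectory $V_x(\cdot)$ is deterministic given $x$) yields
$$\E\xi(x)=\int_0^\infty v(V_x(s))\,\P(\tau>s)\,ds-\E\xi.$$
Since $v\ge 0$ we have $V_x(s)\ge x\to\infty$, and the hypothesis $v(y)\to v_c$ forces $v(V_x(s))\to v_c$ pointwise in $s$. The integrand is dominated by $\bar v\,\P(\tau>s)$, which is integrable because $\E\tau<\infty$, so dominated convergence delivers $\E\xi(x)\to v_c\E\tau-\E\xi=0$, which is \eqref{Mcwazd}.

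For the sub-exponential conclusion fix $\lambda>0$ and set $\sigma_M:=\inf\{n\ge 0:R_n\le M\}$. The strong Markov property together with \eqref{cond:Rgr0} gives
$$\psi(x)\ \ge\ c_M\,\P_x\{\sigma_M<\infty\},\qquad c_M:=\inf_{y\le M}\psi(y)>0,$$
so the problem reduces to exhibiting, for every $\lambda$, some $M=M(\lambda)$ for which $e^{\lambda x}\P_x\{\sigma_M<\infty\}\to\infty$. The underlying picture is that Part 1 forces $\{R_n\}$ to behave like a zero-drift random walk for large values, and such a walk reaches any fixed level at a sub-exponential rate in the starting point.

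To implement this I would use the Doob decomposition $R_n-x=N_n+A_n$ with martingale $N_n$ and predictable drift $A_n=\sum_{k<n}\mu(R_k)$, where $\mu(y):=\E\xi(y)$. Given $\varepsilon>0$, choose $M$ so large that $|\mu(y)|\le\varepsilon$ for all $y\ge M$; then on the event $\{R_k>M,\ k<n\}$ we have $|A_n|\le n\varepsilon$, while the conditional variance of $N_n$ grows linearly, $\langle N\rangle_n\ge n\sigma^2$ for some $\sigma^2>0$ (this $\sigma^2$ exists because $\V\xi(y)\to\V(v_c\tau-\xi)>0$ by an analogous dominated-convergence step, using nondegeneracy of $(\xi,\tau)$). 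Taking $n=\lfloor x/\varepsilon\rfloor$ and applying a Gaussian lower bound to $\{N_n\le M-x-n\varepsilon\}\cap\{R_k>M,\ k<n\}$ would yield
$$\P_x\{\sigma_M\le n\}\ \ge\ c\,\exp(-Cx\varepsilon)$$
for absolute constants $c,C$; since $\varepsilon$ is arbitrary, setting $\varepsilon=\lambda/(2C)$ gives $\psi(x)\ge c'e^{-\lambda x/2}$, whence $e^{\lambda x}\psi(x)\to\infty$.

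The main obstacle will be producing the quantitative Gaussian lower bound on $N_n$ without imposing exponential moments on $\xi$ and $\tau$. A clean workaround is to truncate the claims and inter-arrival times at a high level, run the argument for the resulting chain with bounded increments (where Azuma--Hoeffding or a Berry--Esseen-based martingale CLT applies), and then transfer the conclusion back by a monotone comparison. Alternatively one can lean on Lamperti-type recurrence results from \cite{Lamp60,AMI,MPW16}: when the drift decays sufficiently slowly to zero the chain is recurrent and $\P_x\{\sigma_M<\infty\}=1$ uniformly in $x$, so only the transient regime requires the Gaussian argument. In either case the essential point is that a sub-exponential lower bound on $\P_x\{\sigma_M<\infty\}$ is an automatic consequence of the asymptotically zero drift proved in Part 1.
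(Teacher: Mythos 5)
Your proof of the first assertion is correct and is, if anything, cleaner than the paper's: the paper passes via the weak convergence $\xi(x)\Rightarrow v_c\tau-\xi$ and tacitly uses the uniform domination $|\xi(x)|\le\overline v\tau+\xi$ to upgrade to moment convergence, whereas your Fubini plus dominated-convergence calculation yields $\E\xi(x)\to v_c\E\tau-\E\xi=0$ directly.

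The second assertion, however, is where you have a genuine gap. Theorem~\ref{non.exp.return} assumes only $\E\xi<\infty$ and $\E\tau<\infty$, but your Doob-decomposition/Gaussian-lower-bound strategy requires second-moment control: the statements $\langle N\rangle_n\ge n\sigma^2$ and $\V\xi(y)\to\V(v_c\tau-\xi)$ presuppose $\E\xi^2<\infty$ and $\E\tau^2<\infty$, neither of which is available here. Your two proposed workarounds are not carried through, and the truncation one actually fails: truncating $\xi$ downward shifts the critical rate to $\E\min(\xi,K)/\E\tau<v_c$, so the truncated chain acquires a strictly positive drift at infinity, becomes strongly transient, and its ruin probability decays exponentially, giving no useful lower bound to transfer back. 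Truncating $\tau$ as well does not restore a monotone coupling, since shrinking $\tau$ decreases $V_x(\tau)$ while shrinking $\xi$ increases $V_x(\tau)-x-\xi$, so the two truncations pull the increment in opposite directions. You also still need a first-passage lower bound along paths confined to $(M,\infty)$, not a one-time Gaussian marginal bound, which a Berry--Esseen estimate alone does not deliver.

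The paper closes the same gap with a short test-function argument that avoids higher moments entirely. Set $U_\lambda(x)=\min(e^{-\lambda x},1)$. From $\xi(x)\Rightarrow v_c\tau-\xi$, Fatou's lemma plus strict Jensen's inequality ($\E e^{\lambda(\xi-v_c\tau)}>e^{\lambda\E(\xi-v_c\tau)}=1$) give, for every $\lambda>0$, a level $x_\lambda$ beyond which $\E\{e^{-\lambda\xi(x)}-1;\ \xi(x)>-x\}\ge\varepsilon>0$. Hence $U_\lambda(R_{n\wedge\tau_{B_\lambda}})$ is a bounded submartingale for $B_\lambda=(-\infty,x_\lambda]$, and optional stopping yields $\P_x\{\tau_{B_\lambda}<\infty\}\ge e^{-\lambda x}$ for $x>x_\lambda$. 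Combining with \eqref{cond:Rgr0} gives $\psi(x)\ge\delta(\lambda)e^{-\lambda x}$ for every $\lambda>0$, and since $\lambda$ is arbitrary, $e^{\lambda x}\psi(x)\to\infty$ follows by applying the estimate with a smaller exponent. The point to take away is that the quantitative CLT machinery can be replaced by a one-line drift inequality for a well-chosen Lyapunov function, and that this is exactly what makes the statement hold under first-moment assumptions only.
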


\begin{proof}
Since $v(y)\to v_c$, for all $t>0$,
\begin{eqnarray*}
\int_0^t v(V_x(s))ds &\to& v_ct\quad\mbox{as }x\to\infty.
\end{eqnarray*}
This implies the following convergence in distribution:
\begin{eqnarray*}
\xi(x) &\Rightarrow& v_c\tau-\xi\quad\mbox{as }x\to\infty,
\end{eqnarray*}
which implies the first statement. It also implies that, for all $\lambda>0$,
\begin{eqnarray*}
(e^{-\lambda\xi(x)}-1)\I\{\xi(x)>-x\} &\Rightarrow& 
e^{\lambda(\xi-v_c\tau)}-1\quad\mbox{as }x\to\infty.
\end{eqnarray*}
Hence, as follows from Fatou's Lemma, 
\begin{eqnarray*}
\liminf_{x\to\infty}\ 
\E(e^{-\lambda\xi(x)}-1)\I\{\xi(x)>-x\} &\ge& \E e^{\lambda(\xi-v_c\tau)}-1\\
&>& e^{\lambda\E(\xi-v_c\tau)}-1.
\end{eqnarray*}
Recalling that $v_c=\E\xi/\E\tau$, we get $\E(\xi-v_c\tau)=0$.
Therefore, for all $\lambda>0$
there exists an $\varepsilon=\varepsilon(\lambda)>0$ such that 
\begin{eqnarray}\label{Fatou}
\E(e^{-\lambda\xi(x)}-1)\I\{\xi(x)>-x\} 
&\ge& \varepsilon\quad\mbox{for all sufficiently large }x.
\end{eqnarray}

Let $\lambda>0$. Consider a bounded decreasing function 
$U_\lambda(x):=\min(e^{-\lambda x},1)$. For all $x>0$,
\begin{eqnarray*}
\E(U_\lambda(x+\xi(x))-U_\lambda(x)) &\ge& 
\E\{e^{-\lambda(x+\xi(x))}-e^{-\lambda x};\ x+\xi(x)>0\}\\
&=& e^{-\lambda x}\E\{e^{-\lambda \xi(x)}-1;\ \xi(x)>-x\}.
\end{eqnarray*}
Due to \eqref{Fatou}, there exists a sufficiently large $x_\lambda>0$ such that 
\begin{eqnarray*}
\E(U_\lambda(x+\xi(x))-U_\lambda(x)) &\ge& 0\quad\mbox{for all }x>x_\lambda.
\end{eqnarray*}
Therefore, the process $\{U_\lambda(R_{n\wedge\tau_{B_\lambda}})\}$ 
is a bounded submartingale, where $B_\lambda:=(-\infty,x_\lambda]$
and $\tau_B=\min\{n:R_n\in B\}$.
Hence by the optional stopping theorem, for $z>x_\lambda$ 
and $x\in(x_\lambda,z)$,
$$
\E_x U_\lambda(R_{\tau_{B_\lambda}\wedge\tau_{(z,\infty)}})
\ \ge\ \E_x U_\lambda(X_0)\ =\ U_\lambda(x).
$$
Letting $z\to\infty$ we conclude that
\begin{eqnarray*}
\E_x\{U_\lambda(R_{\tau_{B_\lambda}});\ \tau_{B_\lambda}<\infty\} &=& 
\lim_{z\to\infty}\E_x\{U_\lambda(R_{\tau_{B_\lambda}});\ \tau_{B_\lambda}<\tau_{(z,\infty)}\}\\
&=& \lim_{z\to\infty}\E_x U_\lambda(R_{\tau_{B_\lambda}\wedge\tau_{(z,\infty)}})
-\lim_{z\to\infty}\E_x\{U_\lambda(R_{\tau_{(z,\infty)}});\ \tau_{B_\lambda}>\tau_{(z,\infty)}\}\\
&\ge& U_\lambda(x)-0\ =\ U_\lambda(x).
\end{eqnarray*}
On the other hand, since $U_\lambda$ is bounded by $1$,
\begin{eqnarray*}
\E_x\{U_\lambda(R_{\tau_{B_\lambda}});\ \tau_{B_\lambda}<\infty\} 
&\le& \P_x\{\tau_{B_\lambda}<\infty\}.
\end{eqnarray*}
This allows us to deduce the lower bound
\begin{eqnarray*}
\P_x\{\tau_{B_\lambda}<\infty\} &\ge& U_\lambda(x)\ =\ e^{-\lambda x}
\quad\mbox{for all }x>x_\lambda,
\end{eqnarray*}
and hence the conclusion (ii) follows, because by the Markov property, 
for all $\lambda>0$ and $x>0$,
\begin{eqnarray}
\label{lower.hat.0}
\psi(x)\ =\ \P_x\{\tau_{(-\infty,0]}<\infty\} &\ge& 
\P_x\{\tau_{B_\lambda}<\infty\}
\inf_{y\in(0,x_\lambda]} \P_y\{\tau_{(-\infty,0)}<\infty\}\nonumber\\
&\ge& \delta(\lambda)\P_x\{\tau_{B_\lambda}<\infty\},
\end{eqnarray}
where $\delta(\lambda)=(\varepsilon(x_\lambda))^{x_\lambda/\varepsilon(x_\lambda)}$,
owing to the condition \eqref{cond:Rgr0}.
\end{proof}

\section{Transience of the underlying Markov chain}
\label{sec:transience}
In this section we find conditions on the rate function $v(z)$
which ensure that the ruin probability is strictly less than one.

\begin{theorem}\label{thm:transience}
Let, for some $\theta>0$, 
\begin{eqnarray}\label{upper.for.vz}
v(z) &\ge& v_c+\theta/z\quad\mbox{for all sufficiently large }z.
\end{eqnarray}
Let both $\E \tau_1^2$ and $\E\xi_1^2$ be finite. If
\begin{eqnarray}\label{eq:theta-cond}
\theta &>& \frac{\V\xi+v_c^2\V\tau}{2\E\tau},
\end{eqnarray}
then the underlying Markov chain $\{R_n=R(T_n)\}$ is transient or, 
equivalently, $\psi(x)<1$ for all $x>0$. 

If, in addition,
\begin{eqnarray}\label{asy.for.vz}
v(z) -v_c &\sim& \theta/z\quad\mbox{as }z\to\infty,
\end{eqnarray}
then $R_n^2/n$ weakly converges to a $\Gamma$-distribution with
mean $2\mu+b$ and variance $(2\mu+b)2b$ where
$\mu:=\theta\E\tau$ and $b:=\V\xi+v_c^2\V\tau$.
\end{theorem}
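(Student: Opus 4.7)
The plan is to recognize $R_n^2/n$ as a discrete-time approximation to a scaled squared Bessel process of dimension $\delta:=(2\mu+b)/b$, whose time-$1$ marginal is the claimed $\Gamma$-law.

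The first ingredient is the asymptotics of the one-step conditional moments of $\xi(x)$. Starting from $\xi(x)=_{\rm st}\int_0^\tau v(V_x(s))\,ds-\xi$, using $V_x(s)\ge x$ (since $v\ge 0$) together with \eqref{asy.for.vz}, dominated convergence gives $x\,\E\xi(x)\to\theta\E\tau=\mu$. Combined with the weak convergence $\xi(x)\Rightarrow v_c\tau-\xi$ noted in the proof of Theorem \ref{non.exp.return} and the uniform bound $|\xi(x)|\le\overline v\tau+\xi$ whose square is integrable, this yields $\E\xi(x)^2\to\V\xi+v_c^2\V\tau=b$. Expanding $R_{n+1}^2-R_n^2=2R_n\xi(R_n)+\xi(R_n)^2$ then gives, as $x\to\infty$,
$$
\E[R_{n+1}^2-R_n^2\mid R_n=x]=2\mu+b+o(1),\qquad
\V[R_{n+1}^2-R_n^2\mid R_n=x]\sim 4bx^2.
$$

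These are exactly the drift $2\mu+b$ and the infinitesimal variance $4by$ at the point $y=x^2$ of the SDE
$$
dY_t=(2\mu+b)\,dt+2\sqrt{bY_t}\,dW_t,
$$
whose solution started at $0$ satisfies $Y_t=b\tilde Y_t$ with $\tilde Y$ a squared Bessel process of dimension $\delta$. Because the first part of the theorem already ensures $R_n\to\infty$ almost surely, the above asymptotic moment formulas eventually govern the dynamics, and a diffusion-approximation theorem for Lamperti-type Markov chains in the spirit of Menshikov--Popov--Wade \cite{MPW16} yields $R_{\lfloor nt\rfloor}^2/n\Rightarrow Y_t$ in $D[0,\infty)$. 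In particular $R_n^2/n\Rightarrow Y_1=b\tilde Y_1$, and since the time-$1$ marginal of a dimension-$\delta$ squared Bessel started at $0$ is the $\Gamma(\delta/2,2)$ law, $Y_1$ has the $\Gamma(\delta/2,2b)$ distribution with mean $\delta b=2\mu+b$ and variance $(\delta/2)(2b)^2=2b(2\mu+b)$, as claimed.

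The main obstacle will be carrying out the diffusion approximation under the second-moment hypothesis $\E\xi^2,\E\tau^2<\infty$ alone: higher moments of $\xi(x)$ may fail to exist, so Aldous-type tightness criteria cannot be applied off the shelf, and likewise $\E(R_{n+1}^2-R_n^2)^2$ need not even be finite. I would get round this by truncating the jumps $\xi(R_n)$ at a threshold $M_n\to\infty$ growing slowly with $n$, bounding the probability of a discarded jump via Chebyshev with the second moment, and checking that the truncated chain still satisfies the asymptotic drift-and-variance identities above; identification of the limit through the martingale problem then requires only the two-moment information extracted in the first step.
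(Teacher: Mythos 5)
Your proposal addresses only the second half of the theorem. For the transience claim you write ``\emph{Because the first part of the theorem already ensures $R_n\to\infty$ almost surely}'' and then proceed; but that first part is precisely what needs to be proven, so this is a genuine gap, not merely a deferral. The paper handles transience by a comparison trick that you would need: it replaces $v$ by $v_\theta(z):=\min(v(z),\,v_c+\theta/z)$, which by \eqref{upper.for.vz} equals $v_c+\theta/z$ exactly for all large $z$; this modified rate satisfies the two-sided hypotheses of Lemma~\ref{l:risk}, giving $2m_{\theta,1}(x)/m_{\theta,2}(x)\ge(1+\varepsilon)/x$ from \eqref{eq:theta-cond}, after which a Lamperti-type transience criterion (Theorem~3 of \cite{DKW2013}) applies to the dominated chain. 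Transience of the original chain then follows by the pathwise monotonicity $V_x(t)\ge V_{\theta,x}(t)$. Without this (or some other) reduction, the hypothesis \eqref{upper.for.vz} alone does not give you convergent one-step moment asymptotics, because it only bounds $v$ from below. Note also that transience is asserted under \eqref{upper.for.vz} alone, whereas your diffusion-approximation machinery only kicks in under the stronger \eqref{asy.for.vz}; so even a transience conclusion extracted from a distributional limit would not cover the full range of the first claim.

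For the $\Gamma$-limit itself, your route is genuinely different from the paper's: the paper cites Theorem~4 of \cite{DKW2013} directly, while you propose a functional limit theorem toward a squared Bessel diffusion. The identification of the limit law (dimension $\delta=(2\mu+b)/b$, time-$1$ marginal $\Gamma(\delta/2,2b)$ with the stated mean and variance) is correct. However, the core analytic content --- getting tightness and the martingale-problem identification under only $\E\xi^2,\E\tau^2<\infty$ --- is exactly the hard part, and you explicitly defer it to a truncation argument that is not carried out. The conditional-variance asymptotic $\V[R_{n+1}^2-R_n^2\mid R_n=x]\sim 4bx^2$ contains a term $\V\,\xi(x)^2$ which is not even finite under the second-moment hypothesis, so the truncation is not a cosmetic repair but is doing all the work. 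Also, in your derivation of $x\,\E\xi(x)\to\theta\E\tau$, ``$V_x(s)\ge x$'' by itself is not enough: you need an upper envelope $v(z)-v_c\le C/z$ for large $z$, which does come from \eqref{asy.for.vz} but should be invoked explicitly; the paper obtains this via the two-sided sandwich for $V_x(t)-x$ in the proof of Lemma~\ref{l:risk}.
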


As we see from the convergence to a $\Gamma$-distribution,
in the case \eqref{eq:theta-cond} the chain $R_n$ escapes to infinity in probability
at rate $\sqrt n$ in quite specific way as there is now law of large numbers.
In the case where $v(z)-v_c\sim c/z^\alpha$ with $\alpha\in(0,1)$,
the chain $R_n$ is transient too, however as follows from Lamperti 
\cite[Theorem 7.1]{Lamp62}, it follows a law of large numbers,
$R_n^{1+\alpha}/n\to c(1+\alpha)$ as $n\to\infty$.

Below we prove Theorem \ref{thm:transience} via Lyapunov (test) functions approach, so 
we start with moment computations for the jumps of  $\{R_n\}$.
Denote by $m_k(x)$ the $k$th moment of the jump $\xi(x)$ 
of the chain $\{R_n\}$ from state $x$, that is,
$m_k(x) =\E \xi^k(x)$.

\begin{lemma}\label{l:risk}
If both $\E \tau^2$ and $\E\xi^2$ are finite, then, 
under the rate of convergence \eqref{risk.2}, as $x\to\infty$,
\begin{eqnarray}\label{risk.7}
m_1(x) &=& \frac{\theta \E\tau}{x}+O(p(x)+1/x^2),\\
\label{risk.8}
m_2(x) &=& \V\xi+v_c^2\V\tau+O(1/x).
\end{eqnarray}
Let $\delta>0$. If $\E\xi^{\gamma_0+2}<\infty$ for some $\gamma_0\ge 0$ then
\begin{eqnarray}\label{cond.xi.le.return}
\P\{\xi(x)<-\delta x\} &=& o(p_1(x)/x^{\gamma_0+1})\quad\mbox{as }x\to\infty,
\end{eqnarray}
for some decreasing integrable at infinity function $p_1(x)$.

If $\E \tau^2\log(1+\tau)<\infty$ and $\E\xi^2\log(1+\xi)<\infty$, 
then, as $x\to\infty$,
\begin{eqnarray}\label{cond.3.moment.return}
\E\bigl\{|\xi(x)|^3;\ |\xi(x)|\le \delta x\bigr\} &=& o(x^2p_1(x)),\\
\label{risk.8.tail}
\E\{\xi^2(x);\ |\xi(x)|>\delta x\} &=& o(xp_1(x)).
\end{eqnarray}
\end{lemma}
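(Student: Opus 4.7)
The core representation is $\xi(x) =_{\rm st} A_x - \xi$ where $A_x := \int_0^\tau v(V_x(s))\,ds$, with $V_x$ the flow $V_x' = v(V_x)$, $V_x(0) = x$, and $\tau, \xi$ independent. Because $v \ge 0$ is bounded, $V_x(s) \in [x, x+\overline v s]$ throughout, so \eqref{risk.2} together with the monotonicity of $p$ yields, for $x$ large and uniformly in $s \in [0,\tau]$,
\[
v(V_x(s)) - v_c \;=\; \frac{\theta}{V_x(s)} + O\bigl(p(V_x(s))\bigr) \;=\; \frac{\theta}{x} + O\!\Bigl(\frac{s}{x^2} + p(x)\Bigr),
\]
using $1/V_x(s) = 1/x + O(s/x^2)$ and $p(V_x(s)) \le p(x)$.

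For \eqref{risk.7} the cancellation $v_c\E\tau = \E\xi$ reduces $m_1(x)$ to $\E\int_0^\tau[v(V_x(s)) - v_c]\,ds$; substituting the above expansion and integrating against $\tau$ (using $\E\tau, \E\tau^2 < \infty$) produces $\theta\E\tau/x + O(1/x^2 + p(x))$. For \eqref{risk.8} I would decompose $\xi(x) = (v_c\tau - \xi) + R_x$ with $R_x := A_x - v_c\tau = O(\tau/x)$ a.s., expand the square, and exploit independence of $\tau$ and $\xi$ together with $\E(v_c\tau - \xi) = 0$ to extract the leading constant $\V\xi + v_c^2\V\tau$; the cross term and $\E R_x^2$ are each $O(1/x)$ from the finiteness of second moments.

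For \eqref{cond.xi.le.return}, non-negativity $A_x \ge 0$ gives $\xi(x) \ge -\xi$, hence $\{\xi(x) < -\delta x\} \subset \{\xi > \delta x\}$. Finiteness of $\E\xi^{\gamma_0+2}$ yields both $x^{\gamma_0+2}\P\{\xi > \delta x\} \to 0$ and integrability of $x \mapsto x^{\gamma_0+1}\P\{\xi > \delta x\}$ on $[1,\infty)$, from which a standard regularisation argument (passing to a decreasing envelope and weighting by a slowly decaying factor) produces a decreasing integrable $p_1$ with $x^{\gamma_0+1}\P\{\xi > \delta x\} = o(p_1(x))$.

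For \eqref{cond.3.moment.return} and \eqref{risk.8.tail}, the key additional observation is that, since $v(y) \ge v_c/2$ for all $y$ sufficiently large, one has the almost-sure lower bound $A_x \ge v_c\tau/2$ for $x$ large. Hence on $\{|\xi(x)| \le \delta x\}$ either $\tau \le Cx$ (and then $\xi = A_x - \xi(x) \le C'x$ as well), or $\tau$ is large and $\xi$ must lie in the narrow window $[A_x - \delta x, A_x + \delta x]$ centred at $A_x \ge v_c\tau/2 \ge 2\delta x$, whose conditional probability given $\tau$ is bounded by $\P\{\xi > v_c\tau/4\}$. Combining the truncation $|\xi(x)|^3 \le \delta x \cdot \xi^2(x)$ with a careful split of $\E\{\xi^2(x); |\xi(x)| \le \delta x\}$ over these two regimes yields \eqref{cond.3.moment.return}, and a symmetric analysis on the complementary event gives \eqref{risk.8.tail}. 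The main obstacle is producing a decreasing integrable $p_1$ that dominates the resulting estimates in the $o$-sense: the logarithmic moment hypotheses $\E\xi^2\log(1+\xi) < \infty$ and $\E\tau^2\log(1+\tau) < \infty$ are precisely what make $y\P\{\xi > y\}$ and $y\P\{\tau > y\}$ integrable against a logarithmic weight, and this extra integrability is exactly the slack consumed by the regularisation that yields the required $p_1$.
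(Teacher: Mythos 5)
Your arguments for \eqref{risk.7}, \eqref{risk.8}, and \eqref{cond.xi.le.return} are sound and essentially match the paper's: the sandwich $v_c t \le V_x(t)-x \le v_c t + (\theta/x + p(x))t$ drives the first two moment expansions, and $\xi(x)\ge -\xi$ gives the left-tail bound, after which a regularisation lemma (the paper's Lemma~\ref{l:maj.p.e}) supplies the required $p_1$. Your outline for \eqref{risk.8.tail} also goes through: large positive $\xi(x)$ forces $\tau$ to be large, large negative $\xi(x)$ forces $\xi$ to be large, and the logarithmic moments push each resulting tail expectation down to $o(xp_1(x))$ (the paper's Lemma~\ref{l:maj.p.e.log}).

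There is a genuine gap, however, in your treatment of \eqref{cond.3.moment.return}. The inequality $|\xi(x)|^3\le\delta x\,\xi^2(x)$ on $\{|\xi(x)|\le\delta x\}$ is too lossy: since $\xi(x)\Rightarrow v_c\tau-\xi$ and $\E\xi^2(x)\to\V\xi+v_c^2\V\tau>0$, the quantity $\E\{\xi^2(x);|\xi(x)|\le\delta x\}$ converges to a \emph{positive constant}, so this route yields only the crude bound $O(x)$. But $p_1$ being decreasing and integrable forces $xp_1(x)\to0$, so $x^2p_1(x)=o(x)$, and the claim $\E\{|\xi(x)|^3;|\xi(x)|\le\delta x\}=o(x^2p_1(x))$ is a strictly \emph{sub-linear} statement; $O(x)$ is not enough. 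Your "careful split" into $\tau\le Cx$ versus $\tau>Cx$ does not repair this: the dominant contribution to $\E\{\xi^2(x);|\xi(x)|\le\delta x\}$ comes from the bulk region where $\tau$ and $\xi$ are $O(1)$ (hence $\tau\le Cx$), which already contributes the non-vanishing constant. What makes the sub-linear bound true is not factoring out one power of $\delta x$, but an integration-by-parts estimate that exploits the \emph{decay of the tail}: with $\Xi:=\overline v\tau+\xi$ one has $\E\{|\xi(x)|^3;|\xi(x)|\le\delta x\}\le 3\int_0^{\delta x}y^2\P\{\Xi>y\}\,dy\le 3\int_0^{\delta x}y\,\E\{\Xi;\Xi>y\}\,dy$, and because $\E\{\Xi;\Xi>y\}\to0$ as $y\to\infty$, the resulting function $p(x):=x^{-2}\int_0^x y\,\E\{\Xi;\Xi>y\}\,dy$ is decreasing and integrable (this is exactly the paper's Lemma~\ref{l:p.V.maj.o} with $\gamma=2$, $\alpha=1$, needing only $\E\Xi^2<\infty$). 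You should replace the $|\xi(x)|^3\le\delta x\,\xi^2(x)$ step with this truncated-moment/integration-by-parts argument; the $\tau$-versus-$\xi$ dichotomy and the lower bound $A_x\ge v_c\tau/2$ are then superfluous for this item.
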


\begin{proof}
By \eqref{risk.2},
\begin{eqnarray*}
v(y) &\le& v_c+\theta/y+p(y)\\
&\le& v_c+\theta/x+p(x)\quad\mbox{for all }y\ge x,
\end{eqnarray*}
therefore
\begin{eqnarray}\label{risk.Vx.above}
\nonumber
V_x(t)-x &=&\int_0^t v(V_x(s))ds\\
&\le& v_ct+\theta t/x+p(x)t,\quad t>0.
\end{eqnarray}
On the other hand, again by \eqref{risk.2},
\begin{eqnarray*}
v(y) &\ge& v_c+\theta/y-p(y)\\
&\ge& v_c+\theta/y-p(x)\quad\mbox{for all }y\ge x,
\end{eqnarray*}
Hence,
\begin{eqnarray*}
V_x(t)-x &\ge& v_ct+\theta\int_0^t \frac{ds}{V_x(s)}-p(x)t\\
&\ge& v_ct+\theta\int_0^t \frac{ds}{x+(v_c+\theta/x+p(x))s}-p(x)t\\
&=& v_ct+\frac{\theta}{v_c+\theta/x+p(x)}
\log\bigl(1+(v_c+\theta/x+p(x))t/x\bigr)-p(x)t,
\end{eqnarray*}
where the second inequality follows from the upper bound \eqref{risk.Vx.above}. Therefore,
\begin{eqnarray}\label{risk.Vx.below}
V_x(t)-x &\ge& v_ct+\frac{\theta}{v_c+\theta/x+p(x)}
\log\bigl(1+v_ct/x\bigr)-p(x)t.
\end{eqnarray}
Since $\xi(x)=V_x(\tau)-x-\xi$,
it follows from \eqref{risk.Vx.above} and \eqref{risk.Vx.below} that
\begin{eqnarray}\label{risk.6}
\lefteqn{v_c\tau-\xi+\frac{\theta}{v_c+\theta/x+p(x)}
\log\Bigl(1+\frac{v_c\tau}{x}\Bigr)-p(x)\tau}\nonumber\\
&&\hspace{40mm}\le\ \xi(x)\ \le\
v_c\tau-\xi +\frac{\theta\tau}{x}+p(x)\tau.\hspace{10mm}
\end{eqnarray}
Recalling that $v_c=\E\xi/\E\tau$, we get
\begin{eqnarray*}
\frac{\theta}{v_c+\theta/x+p(x)}\E\log\Bigl(1+\frac{v_c\tau}{x}\Bigr)-p(x)\E\tau
&\le& m_1(x)\ \le\ \frac{\theta}{x}\E\tau+p(x)\E\tau.
\end{eqnarray*}
By the inequality $\log(1+z)\ge z-z^2/2$ for $z\ge 0$,
\begin{eqnarray*}
\E\log\Bigl(1+\frac{v_c\tau}{x}\Bigr) &\ge&
\frac{v_c \E\tau}{x}-\frac{v^2_c \E\tau^2}{2x^2}.
\end{eqnarray*}
Therefore, the relation \eqref{risk.7} follows.
From that expression we have
\begin{eqnarray*}
m_2(x) &=& \V\xi(x)+m_1^2(x)\\
&=& \V(V_x(\tau)-x-\xi)+O(p^2(x)+1/x^2)\\
&=& \V(V_x(\tau)-x)+\V\xi+O(p^2(x)+1/x^2)
\quad\mbox{as }x\to\infty.
\end{eqnarray*}
Recalling that
\begin{eqnarray*}
v_ct-p(x)t &\le& V_x(t)-x\ \le\ v_ct+\frac{\theta}{x}t+p(x)t,
\end{eqnarray*}
we get
\begin{eqnarray*}
(v_c-p(x))\E\tau &\le& \E (V_x(\tau)-x)\ \le\ (v_c+\theta/x+p(x))\E\tau
\end{eqnarray*}
and
\begin{eqnarray*}
(v_c-p(x))^2\E \tau^2 &\le& \E (V_x(\tau)-x)^2
\ \le\ (v_c+\theta/x+p(x))^2\E\tau^2.
\end{eqnarray*}
Hence,
\begin{eqnarray*}
\V(V_x(\tau)-x) &=& v_c^2\V\tau+O(1/x)
\quad\mbox{as }x\to\infty,
\end{eqnarray*}
which in its turn implies \eqref{risk.8}.

Next, since $V_x(\tau)-x\ge 0$ and $\xi\ge 0$, we have
\begin{eqnarray}\label{risk.8a.ge}
\xi^2(x)\I\{\xi(x)>\delta x\} &=& (V_x(\tau)-x-\xi)^2\I\{V_x(\tau)-x-\xi>\delta x\}\nonumber\\
&\le& (V_x(\tau)-x)^2\I\{V_x(\tau)-x>\delta x\}\nonumber\\
&\le& \overline v^2\tau^2\I\{\tau>\delta x/\overline v\},
\end{eqnarray}
where $\overline v=\sup_{z} v(z)$, owing to the inequality $V_x(t)-x\le \overline vt$,
which follows from \eqref{risk.Vx.above}. Similarly,
\begin{eqnarray}\label{risk.8a.le}
\xi^2(x)\I\{\xi(x)<-\delta x\} &=& (V_x(\tau)-x-\xi)^2\I\{V_x(\tau)-x-\xi<-\delta x\}\nonumber\\
&\le& \xi^2\I\{\xi>\delta x\}.
\end{eqnarray}
Then it follows from the finiteness of $\E\xi^2\log(1+\xi)$ and
$\E\tau^2\log(1+\tau)$ that both tail expectations
$\E\{\tau^2;\ \tau>\delta x/\overline v\}$ and
$\E\{\xi^2;\ \xi>\delta x\}$ are of order $o(xp_1(x))$ 
for some decreasing integrable at infinity function $p_1(x)$,
see Lemma \ref{l:maj.p.e.log}.
Hence the upper bound \eqref{risk.8.tail}.

Further, the upper bound \eqref{cond.3.moment.return} 
follows from Lemma \ref{l:p.V.maj.o} with $\gamma=2$ and $\alpha=1$.

Finally, 
\begin{eqnarray*}
\P\{\xi(x)<-\delta x\} &\le& \P\{\xi>\delta x\}\ =\ o(p_1(x)/x^{\gamma_0+1}),
\end{eqnarray*}
for some decreasing integrable at infinity function $p_1(x)$,
due to Lemma \ref{l:maj.p.e} with $\gamma=\gamma_0+2$, $\beta=0$, and $\alpha=1$.
Hence the upper bound \eqref{cond.xi.le.return}.
\end{proof}

\begin{proof}[Proof of Theorem \ref{thm:transience}.]
Let us consider the function $v_\theta(z):=\min(v(z),v_c+\theta/z)$.
The dynamics of the risk reserve between two consequent claims
with premium rate $v_\theta(z)$ is governed by the differential equation $R'(t)=v_\theta(R(t))$.
Let $V_{\theta,x}(t)$ denote its solution with the initial value $x$, so then
\begin{eqnarray*}
V_{\theta,x}(t) &=& x+\int_0^t v_\theta(V_{\theta,x}(s))ds.
\end{eqnarray*}
Since $v_\theta(z)\le v(z)$, 
\begin{eqnarray}\label{V.compar}
V_x(t) &\ge& V_{\theta,x}(t)\quad\mbox{for all }t>0.
\end{eqnarray}

For $\xi_\theta(x):=V_{\theta,x}(\tau)-x-\xi$, denote $m_{\theta,k}(x):=\E\xi_\theta^k(x)$. Since 
$v_\theta(z)=v_c+\theta/z$ for all sufficiently large $z$,
Lemma ~\ref{l:risk} applies. As a result we have
\begin{eqnarray*}
m_{\theta,1}(x) &=& \frac{\theta \E\tau}{x}+O(1/x^2)\quad\mbox{as }x\to\infty,
\end{eqnarray*}
and
\begin{eqnarray*}
m_{\theta,2}(x) &=& \V\xi+v_c^2\V\tau+O(1/x)
\quad\mbox{as }x\to\infty.
\end{eqnarray*}
Therefore,
\begin{eqnarray*}
\frac{2m_{\theta,1}(x)}{m_{\theta,2}(x)} &=&
\frac{2\theta\E \tau}{\V\xi+v_c^2\V\tau} \cdot
\frac{1}{x}+O(1/x^2)\quad\mbox{as }x\to\infty.
\end{eqnarray*}
By the condition on $\theta$, there exists an $\varepsilon>0$ such that
\begin{eqnarray}\label{ratio.lower.bound}
\frac{2m_{\theta,1}(x)}{m_{\theta,2}(x)} &\ge& \frac{1+\varepsilon}{x}
\quad\mbox{for all sufficiently large }x.
\end{eqnarray}
Further, again by Lemma \ref{l:risk} with $\gamma_0=0$, for any fixed $\delta>0$,
\begin{eqnarray}\label{left.tail}
\P\{\xi_\theta(x)\le -\delta x \} &=& O(p(x)/x)\quad\mbox{as }x\to\infty,
\end{eqnarray}
for some decreasing integrable at infinity function $p(x)$,
due to $\E\xi^2<\infty$. 

The bounds \eqref{ratio.lower.bound} and \eqref{left.tail} show that
the conditions (11) and (13) from Theorem 3 in \cite{DKW2013} hold true.
In addition, the Markov chain $\{R_{\theta,n}\}$---the embedded Markov chain for the ruin process with premium rate $v_\theta(z)$---dominates
a similar Markov chain generated by a risk process with constant premium rate $v_c$ which represents a zero-drift random walk.
The latter is null-recurrent and hence satisfying the condition (12) 
from Theorem 3 in \cite{DKW2013}, thus $\limsup_{n\to\infty} R_{\theta,n}=\infty$.
Therefore, Theorem 3 from \cite{DKW2013} applies and
we conclude that the chain $\{R_{\theta,n}\}$ is transient.
Then the original chain $\{R_n\}$ is transient too, 
due to the domination property \eqref{V.compar}.

The convergence to a $\Gamma$-distribution follows from Theorem 4 in \cite{DKW2013}.
\end{proof}

\begin{remark}
It is worth mentioning that the condition \eqref{eq:theta-cond}
is close to be minimal one for $\psi(x)<1$. More precisely, one can show that if 
$$
v(z)\le v_c+\theta/z\quad\text{for all sufficiently large }z
$$
with some 
$$
\theta < \frac{\V\xi+v_c^2\V\tau}{2\E\tau}
$$
then the chain $\{R_n\}$ is recurrent or, equivalently,
$\psi(x)=1$ for all $x>0$.\\
This statement follows by similar arguments applied to a dominating Markov chain
with premium rate $v_\theta(z):=\max(v(z),v_c+\theta/z)$ that satisfies,
for some $\varepsilon>0$,
\begin{eqnarray*}
2zm_{\theta,1}(z) &\le& (1-\varepsilon)m_{\theta,2}(z)\quad\mbox{for all sufficiently large }z,
\end{eqnarray*}
and hence the classical Lamperti criterion (see, e.g. Lamperti \cite{Lamp60}) 
for recurrence of Markov chains applies. 
\end{remark}

\section{Approaching critical premium rate at rate of $\theta/x$}
\label{sec:rate.1x}

\begin{theorem}\label{thm:risk}
Assume \eqref{cond:Rgr0} and the rate of convergence \eqref{risk.2} 
with some $\theta$ satisfying \eqref{eq:theta-cond}, that is,
\begin{eqnarray*}
\theta &>& \frac{\V\xi+v_c^2\V\tau}{2\E\tau}.
\end{eqnarray*}
Set
\begin{eqnarray*}
\rho &:=& \frac{2\theta\E \tau}{\V\xi+v_c^2\V\tau}-1.
\end{eqnarray*}
If
both $\E \tau^2\log(1+\tau)$ and $\E\xi^{\rho+2}$ are finite
then there exist positive constants $c_1$ and $c_2$ such that
\begin{eqnarray*}
\frac{c_1}{(1+x)^\rho} &\le& \psi(x)\ \le\ \frac{c_2}{(1+x)^\rho}
\quad\mbox{for all } x>0.
\end{eqnarray*}
\end{theorem}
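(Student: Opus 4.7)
I would prove both bounds via the Lyapunov-function method applied to the embedded chain $\{R_n\}$, using test functions that match the sharp power $\rho$ and leveraging the moment asymptotics of Lemma~\ref{l:risk}. At the outset I invoke Lemma~\ref{l:risk} to obtain $m_1(x)=\theta\E\tau/x+O(p(x)+1/x^2)$ and $m_2(x)=b+O(1/x)$ with $b:=\V\xi+v_c^2\V\tau$, together with the large-jump tail $\P(\xi(x)<-\delta x)=o(p_1(x)/x^{\rho+1})$ from \eqref{cond.xi.le.return} applied with $\gamma_0=\rho$. The moment hypotheses $\E\tau^2\log(1+\tau)<\infty$ and $\E\xi^{\rho+2}<\infty$ are calibrated precisely so that all these residuals are small enough to be absorbed by perturbations of the candidate test function $(1+x)^{-\rho}$.

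\textit{Upper bound.} Setting $U(x):=(1+x)^{-\rho}$, Taylor expansion yields
\[
\E U(x+\xi(x))-U(x)=U'(x)m_1(x)+\tfrac12 U''(x)m_2(x)+\mathrm{Rem}(x),
\]
where the two leading contributions combine to $\rho x^{-\rho-2}\bigl[(\rho+1)b/2-\theta\E\tau\bigr]$ and cancel exactly by the definition of $\rho$. The residual $\mathrm{Rem}(x)$ splits into a premium-rate error of order $x^{-\rho-1}p(x)$, a Taylor-remainder term controlled by \eqref{cond.3.moment.return}, and a large-negative-jump contribution controlled by \eqref{cond.xi.le.return}. I then perturb $U$ by a lower-order correction $-C(1+x)^{-\rho-\kappa}$ chosen so that its surviving strict drift $\asymp -x^{-\rho-\kappa-2}$ dominates $\mathrm{Rem}(x)$ uniformly for large $x$, turning $U_\ast(R_n)$ into a genuine supermartingale on $\{R_n>L\}$ for $L$ large. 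Optional stopping at $\tau_L:=\inf\{n:R_n\le L\}$, passage to $n\to\infty$, and monotonicity of $U_\ast$ on $[0,\infty)$ yield $U_\ast(L)\,\P_x(\tau_L<\infty)\le U_\ast(x)$, whence $\psi(x)\le\P_x(\tau_L<\infty)\le c_2(1+x)^{-\rho}$.

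\textit{Lower bound.} Symmetrically, I construct $\widetilde U(x):=(1+x)^{-\rho}+C(1+x)^{-\rho-\kappa}\asymp (1+x)^{-\rho}$, which is bounded and a submartingale on $\{R_n>L\}$ by the analogous drift calculation. Optional stopping of $\widetilde U(R_{n\wedge\tau_L})$ followed by $n\to\infty$, together with $R_n\to\infty$ on $\{\tau_L=\infty\}$ (Theorem~\ref{thm:transience}) and bounded convergence, gives
\[
\widetilde U(x)\le\sup_{y\le L}\widetilde U(y)\cdot\P_x(\tau_L<\infty),
\]
so that $\P_x(\tau_L<\infty)\ge c'(1+x)^{-\rho}$. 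Combining this with \eqref{cond:Rgr0}, which provides a uniform positive lower bound on $\psi$ over $[0,L]$, and the strong Markov property at $\tau_L$, delivers $\psi(x)\ge c_1(1+x)^{-\rho}$.

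\textit{Main obstacle.} The genuine difficulty is achieving the \emph{sharp} exponent $\rho$. Any $\gamma<\rho$ (resp.~$\gamma>\rho$) makes $(1+x)^{-\gamma}$ an easy supermartingale (resp.~submartingale), but only yields the weaker bound $(1+x)^{-\gamma}$. At $\gamma=\rho$ the leading drift vanishes identically, so the sign of the perturbed drift is decided by three competing second-order error sources: the premium-rate deviation $p(z)$, the third-moment Taylor remainder from \eqref{cond.3.moment.return}, and the negative-jump tail from \eqref{cond.xi.le.return}. Tuning the correction parameters $C$ and $\kappa$ so that the correction's strict drift overpowers \emph{all three} uniformly in $x$ large, and verifying that the hypothesized moments on $\xi$ and $\tau$ are exactly what is needed to do this simultaneously for the super- and submartingale constructions, is the technical crux of the argument.
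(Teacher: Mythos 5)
Your high-level strategy is the same as the paper's: Lyapunov functions for $\{R_n\}$ built around the sharp power $x^{-\rho}$, perturbed to make the drift strictly signed, followed by optional stopping and the Markov-property argument via \eqref{cond:Rgr0} for the lower bound. The moment computations you invoke from Lemma~\ref{l:risk} are also the right ones.

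However, the specific perturbation you propose does not close the argument, and the failure is exactly at what you yourself flag as the crux. You take $U_\ast(x)=(1+x)^{-\rho}-C(1+x)^{-\rho-\kappa}$ with a fixed $\kappa>0$; its surviving strict drift is $\asymp -x^{-\rho-\kappa-2}$. But among the residuals you must absorb there is the premium-rate error: since $m_1(x)=\theta\E\tau/x+O(p(x)+1/x^2)$, the term $U'(x)m_1(x)$ carries a contribution of size $\asymp x^{-\rho-1}p(x)$, where $p$ in hypothesis \eqref{risk.2} is only assumed decreasing and integrable. There is no $\kappa>0$ for which $x^{-\rho-\kappa-2}$ dominates $x^{-\rho-1}p(x)$ in general: take $p(x)=1/(x\log^2 x)$, which is admissible, and then $x^{-\rho-1}p(x)=x^{-\rho-2}/\log^2 x$ decays \emph{slower} than $x^{-\rho-\kappa-2}$ for every $\kappa>0$. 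So a power-law correction cannot overpower this error source, and $U_\ast$ need not be a supermartingale beyond any level; the same defect afflicts the submartingale side.

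The paper's construction is designed precisely to avoid this. Instead of perturbing the test function by an additive power term, it perturbs the logarithmic derivative: with $q(x)=(\rho+1)\min(1,1/x)$ and $q_\pm(x)=q(x)\pm p(x)$, it sets $Q_\pm(x)=\int_0^x q_\pm$ and $U_\pm(x)=\int_x^\infty e^{-Q_\pm(y)}\,dy$. Since $p$ is integrable, $U_\pm(x)\asymp U(x)\asymp x^{-\rho}$, but now the Taylor expansion in Lemma~\ref{L.Lyapunov.return} produces a strict drift term $\pm p(x)(1+o(1))e^{-Q_\pm(x)}$, i.e.\ of size $\asymp \pm x^{-\rho-1}p(x)$, which matches the premium-rate error exactly, whatever the rate of decay of $p$. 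If you wish to keep your additive presentation you would have to replace $-C(1+x)^{-\rho-\kappa}$ by something tied to $\int_0^x p$ (e.g.\ a correction of the form $(1+x)^{-\rho}(e^{\mp\int_0^x p}-1)$), which is effectively the paper's $U_\pm$. As stated, your proposal has a genuine gap; under the additional (unstated) hypothesis $p(x)=O(x^{-1-\kappa})$ for some $\kappa>0$ it would go through, but that is strictly weaker than the theorem.

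A smaller point: you should also record that the test function must be extended constantly to $(-\infty,0]$ (the chain can overshoot $0$), as the paper does, before applying optional stopping; otherwise $U(R_{\tau_B})$ need not be defined.
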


These bounds are quite similar to the classical estimates \eqref{eq:Cramer_bound} 
and \eqref{eq:Cramer_asymp}. Indeed, they are universal and only depend  
on a single parameter $\rho$ of the distribution of $(\xi,\tau)$. 
In contrast to the classical Cramer case, the crucial parameter $\rho$ 
is very easy to compute. A further advantage of the bounds in 
Theorem~\ref{thm:risk} is the fact that they are applicable to a wide class 
of claim size distributions: the only restriction is that the moment of order $\rho+2$ 
should be finite; otherwise, the probability of ruin is higher, see Section \ref{sec:heavy-tails}.

By the condition on $\theta$, $\rho>0$. Define 
$$
q(x) := (\rho+1)\min(1,1/x)
$$ 
and
\begin{eqnarray}\label{def.of.R.2}
Q(x) := \int_0^x q(y)dy &\to& \infty\quad\mbox{as }x\to\infty;
\end{eqnarray}
hereinafter we define $Q(x)=0$ for $x<0$.
The increasing function $Q(x)$ is concave on the positive half line
because $q(x)$ is decreasing.
We have, for $c=\rho+1$,
\begin{eqnarray*}
Q(x)\ =\ \int_0^x q(y)dy &=& (\rho+1)\log x+c
\quad\mbox{for all  }x\ge 1,
\end{eqnarray*}
so the function $e^{-Q(x)}$ is integrable at infinity, due to 
$\rho>0$.
It allows us to define the following bounded decreasing function which plays
the most important r\^ole in our analysis of the ruin probabilities:
\begin{eqnarray}\label{def.u.return}
U(x) &:=& \int_x^\infty e^{-Q(y)}dy\quad\mbox{for }x\ge 0;
\end{eqnarray}
and $U(x)=U(0)$ for $x\le 0$. For all $x\ge 1$ we have 
\begin{eqnarray}
\label{eq:QandU}
e^{-Q(x)}= e^{-c}/x^{\rho+1}
\quad\text{and}\quad
U(x)=e^{-c}/\rho x^\rho.
\end{eqnarray}

Let us also define the following auxiliary decreasing functions needed for our analysis.
Without loss of generality we assume that $p_1(x)\le p(x)\le q(x)$ for all $x$, 
where $p_1(x)$ is given by Lemma~\ref{l:risk};
otherwise we can always consider the function $\max(p_1(x),p(x))$ instead of $p(x)$.
Consider the functions $q_+(x):=q(x)+p(x)$ and $q_-(x):=q(x)-p(x)$
and let
\begin{eqnarray}\label{Upm.def}
\nonumber
Q_\pm(x) &:=& \int_0^x q_\pm(y)dy,\\
U_\pm(x) &:=& \int_x^\infty e^{-Q_\pm(y)}dy,\quad x\ge 0,
\end{eqnarray}
and $U_\pm(x)=U_\pm(0)$ for $x\le 0$.
We have $0\le q_-(x)\le q(x)\le q_+(x)$, 
$0\le Q_-(x)\le Q(x)\le Q_+(x)$ and
$U_-(x)\ge U(x)\ge U_+(x)>0$. Since
\begin{eqnarray*}
C_p &:=& \int_0^\infty p(y)dy\quad\mbox{is finite},
\end{eqnarray*}
we have
\begin{eqnarray}\label{equiv.for.R.return}
Q_\pm(x) &=& Q(x)\pm C_p+o(1)\quad\mbox{as }x\to\infty.
\end{eqnarray}
Therefore,
\begin{eqnarray}\label{equiv.for.U.return}
U_\pm(x) \sim e^{\mp C_p}U(x)\sim\frac{e^{\mp C_p}}{\rho}xe^{-Q(x)} \quad\mbox{as }x\to\infty.
\end{eqnarray}

Since $p(x)$ is decreasing and integrable, $p(x)x\to0$ as $x\to\infty$.
We also assume that
\begin{eqnarray}\label{r.p.prime.return}
p'(x)\ =\ O(1/x^2).
\end{eqnarray}
It follows from Lemma \ref{l:g.fin.p.2} that the condition on $p'(x)$
is always satisfied for a properly chosen function $p$.

\begin{lemma}\label{L.Lyapunov.return}
As $x\to\infty$,
\begin{eqnarray}\label{b.for.u.return+}
\E U_+(x+\xi(x))-U_+(x) &=&
p(x)(1+o(1))e^{-Q_+(x)}
\end{eqnarray}
and
\begin{eqnarray}\label{b.for.u.return-}
\E U_-(x+\xi(x))-U_-(x) &=& -p(x)(1+o(1))e^{-Q_-(x)}.
\end{eqnarray}
\end{lemma}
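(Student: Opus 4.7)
My plan is to start from the identity
$$U_\pm(x+\xi(x))-U_\pm(x) = -\int_0^{\xi(x)} e^{-Q_\pm(x+s)}\,ds,$$
which follows directly from $U_\pm' = -e^{-Q_\pm}$. For $|\xi|\le\delta x$ (with $\delta>0$ small and fixed) I would Taylor-expand the integrand in $s$ to second order, exploiting $Q_\pm' = q_\pm$, the bound $q_\pm(x)=O(1/x)$, and the bound $q_\pm'(x) = O(1/x^{2})$ that follows from \eqref{r.p.prime.return} together with the definition of $q$. Taking expectation on the event $\{|\xi(x)|\le\delta x\}$ leaves the principal contribution
$$e^{-Q_\pm(x)}\Bigl[-m_1(x)+\tfrac{q_\pm(x)}{2}m_2(x)\Bigr]$$
plus a cubic-in-$\xi$ remainder of size $O\bigl(x^{-2}\,\E[|\xi(x)|^3;\,|\xi(x)|\le\delta x]\bigr)\cdot e^{-Q_\pm(x)}$, which by \eqref{cond.3.moment.return} is $o(p(x)e^{-Q_\pm(x)})$.

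The algebraic heart of the proof is the cancellation forced by the very definition of $\rho$. Since $(\rho+1)(\V\xi+v_c^{2}\V\tau) = 2\theta\E\tau$ and $q(x) = (\rho+1)/x$ on $[1,\infty)$, the expansions \eqref{risk.7} and \eqref{risk.8} combine to give $-m_1(x)+\tfrac{q(x)}{2}m_2(x) = O(p(x)+1/x^{2}) = O(p(x))$, so that after the decomposition $q_\pm = q\pm p$,
$$-m_1(x)+\tfrac{q_\pm(x)}{2}m_2(x) = \pm\tfrac{1}{2}(\V\xi+v_c^{2}\V\tau)\,p(x)\,(1+o(1)).$$
The positive constant $(\V\xi+v_c^{2}\V\tau)/2$ is then absorbed into the majorising function $p$, which is only fixed up to a multiplicative constant through \eqref{risk.2} and whose monotonicity, integrability, and the bound $p_1\le p$ are all preserved by such a rescaling; this produces the claimed $\pm p(x)(1+o(1))e^{-Q_\pm(x)}$.

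It remains to show the contribution from $\{|\xi(x)|>\delta x\}$ is $o(p(x)e^{-Q_\pm(x)})$. Because $U_\pm$ is bounded by $U_\pm(0)$, the absolute contribution is dominated by $U_\pm(0)\cdot\P\{|\xi(x)|>\delta x\}$; combining Chebyshev with \eqref{risk.8.tail} on the right and \eqref{cond.xi.le.return} on the left (applied with $\gamma_0$ large enough), together with the polynomial decay $e^{-Q_\pm(x)}\asymp x^{-(\rho+1)}$, forces both tails to be negligible on the scale $p(x)e^{-Q_\pm(x)}$.

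The principal obstacle I anticipate is precision bookkeeping. Since the target is of order $p(x)$ itself, every intermediate $O(\cdot)$ estimate from Lemma \ref{l:risk} must be promoted to a genuine $o(p(x))$ once multiplied by the appropriate power of $1/x$ or $q_\pm(x)$. This only works because the $1/x$ contributions in $m_1(x)$ and in $q(x)m_2(x)/2$ cancel \emph{exactly} by virtue of the identity $(\rho+1)(\V\xi+v_c^{2}\V\tau)=2\theta\E\tau$; this exact cancellation is precisely what singles out the critical exponent $\rho$ and will in turn drive the polynomial bounds of Theorem \ref{thm:risk}.
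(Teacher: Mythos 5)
Your overall strategy mirrors the paper's: decompose on $\{|\xi(x)|\le\delta x\}$ and the two tails, Taylor-expand $U_\pm$ (equivalently your integral form of $U_\pm'$) to third order on the middle event, and invoke Lemma~\ref{l:risk} together with the defining identity for $\rho$ to produce the cancellation that isolates the $\pm p(x)$ contribution. That is the right skeleton. However there are two gaps, one of which is a genuine quantitative error.

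The error is in the right-tail estimate. You bound the contribution of $\{\xi(x)>\delta x\}$ by $U_\pm(0)\cdot\P\{\xi(x)>\delta x\}$. Under the hypotheses of Theorem~\ref{thm:risk} one only has $\E\tau^2\log(1+\tau)<\infty$, so via \eqref{risk.8.tail} and Chebyshev the best one can say is $\P\{\xi(x)>\delta x\}=o(p_1(x)/x)$. That makes $U_\pm(0)\P\{\xi(x)>\delta x\}=o(p_1(x)/x)$, which is \emph{not} $o(p(x)e^{-Q_\pm(x)})\asymp o(p(x)/x^{\rho+1})$: you lose a factor of $x^{\rho}$. The paper uses the sharper monotonicity bound $\E\{U_\pm(x+\xi(x))-U_\pm(x);\,\xi(x)>\delta x\}\ge -U_\pm(x)\P\{\xi(x)>\delta x\}$, and since $U_\pm(x)\sim \text{const}\cdot x\,e^{-Q_\pm(x)}$ by \eqref{equiv.for.U.return}, this does give $o(p_1(x)e^{-Q_\pm(x)})$. (For the \emph{left} tail your $U_\pm(0)$ bound is fine, because there the hypothesis $\E\xi^{\rho+2}<\infty$ gives, via \eqref{cond.xi.le.return} with $\gamma_0=\rho$, the much faster decay $\P\{\xi(x)<-\delta x\}=o(p_1(x)/x^{\rho+1})$.) The two tails are not symmetric and must be treated with different tools.

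The second issue is the one you yourself flag as "the principal obstacle" but do not resolve. You correctly note that the exact cancellation forced by $(\rho+1)(\V\xi+v_c^2\V\tau)=2\theta\E\tau$ reduces $-m_1(x)+\tfrac{q(x)}{2}m_2(x)$ to $O(p(x)+1/x^2)$, but then you jump straight to the conclusion that $-m_1(x)+\tfrac{q_\pm(x)}{2}m_2(x)=\pm\tfrac12(\V\xi+v_c^2\V\tau)p(x)(1+o(1))$. That last step needs $O(p(x)+1/x^2)=o(p(x))$, which is false as stated. The resolution is a without-loss-of-generality enlargement of $p$: if \eqref{risk.2} holds with some $p_0$, replace $p$ by a strictly dominating decreasing integrable function (guaranteed by Lemma~\ref{l:g.fin.p}, and by Lemma~\ref{l:denis} one may take it regularly varying of index $-1$ so that also $1/x^2=o(p(x))$). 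Then the error terms from Lemma~\ref{l:risk} become $o(p(x))$, and the functions $Q_\pm,U_\pm$ built from the enlarged $p$ still have the equivalent asymptotics \eqref{equiv.for.R.return}, \eqref{equiv.for.U.return}. Your "absorb the constant into $p$" remark is a separate observation (the stated lemma does appear to have dropped a factor of $m_2(\infty)/2$, which is harmless since only the sign is used in Corollary~\ref{cor:lyapunov.return}); it does not address the $O$-versus-$o$ problem.
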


\begin{proof} 
We start with the following decomposition:
\begin{eqnarray}\label{L.harm2.1.return}
\E U_\pm(x+\xi(x))-U_\pm(x)
&=& \E\{U_\pm(x+\xi(x))-U_\pm(x);\ \xi(x)<-x/2\}\nonumber\\
&& +\E\{U_\pm(x+\xi(x))-U_\pm(x);\ |\xi(x)|\le x/2\}\nonumber\\
&&+\E\{U_\pm(x+\xi(x))-U_\pm(x);\ \xi(x)>x/2\}.
\end{eqnarray}
The third term on the right hand side is negative because
$U_\pm$ decreases and it may be bounded below as follows:
\begin{eqnarray}\label{L.harm2.2a.return}
\E\{U_\pm(x+\xi(x))-U_\pm(x);\ \xi(x)>x/2\}
&\ge& -U_\pm(x)\P\{\xi(x)>x/2\}\nonumber\\
&=& o\bigl(p_1(x)e^{-Q_\pm(x)}\bigr),
\end{eqnarray}
due to the upper bound \eqref{risk.8.tail} which implies
$\P\{\xi(x)>x/2\}=o(p_1(x)/x)$,
and due to the relations \eqref{equiv.for.R.return} and \eqref{equiv.for.U.return}.
Further, the first term on the right hand side of \eqref{L.harm2.1.return}
is positive and possesses the following upper bound:
\begin{eqnarray}\label{L.harm2.2b.return}
\E\{U_\pm(x+\xi(x))-U_\pm(x);\ \xi(x)<-x/2\}
&\le& \E\{U_\pm(x+\xi(x));\ \xi(x)<-x/2\}\nonumber\\
&=& o\bigl(p_1(x)e^{-Q_\pm(x)}\bigr),
\end{eqnarray}
due to the upper bound \eqref{cond.xi.le.return} 
and due to the relations \eqref{eq:QandU} and \eqref{equiv.for.U.return}.

To estimate the second term on the right hand side of \eqref{L.harm2.1.return},
we make use of Taylor's expansion:
\begin{eqnarray}\label{L.harm2.2.return}
\lefteqn{\E\{U_\pm(x+\xi(x))-U_\pm(x);\ |\xi(x)|\le x/2\}}\nonumber\\
&&\hspace{15mm} =\ U_\pm'(x)\E\{\xi(x);|\xi(x)|\le x/2\}
+\frac{1}{2} U_\pm''(x)\E\{\xi^2(x);|\xi(x)|\le x/2\}\nonumber\\
&&\hspace{35mm} +\frac{1}{6}\E\bigl\{U_\pm'''(x+\theta\xi(x))\xi^3(x);
|\xi(x)|\le x/2\bigr\}\nonumber\\
&&\hspace{15mm} =\ U_\pm'(x)m_1(x)+\frac{1}{2} U_\pm''(x)m_2(x)\nonumber\\
&&\hspace{20mm}-U_\pm'(x)\E\{\xi(x);|\xi(x)|>x/2\}
-\frac{1}{2} U_\pm''(x)\E\{\xi^2(x);|\xi(x)|>x/2\}\nonumber\\
&&\hspace{35mm} +\frac{1}{6}\E\bigl\{U_\pm'''(x+\theta\xi(x))\xi^3(x);
|\xi(x)|\le x/2\bigr\},
\end{eqnarray}
where $0\le\theta=\theta(x,\xi(x))\le 1$. By the construction of $U_\pm$,
\begin{eqnarray}\label{U.12.prime.return}
U_\pm'(x)=-e^{-Q_\pm(x)},\qquad
U_\pm''(x)=q_\pm(x)e^{-Q_\pm(x)}=(q(x)\pm p(x))e^{-Q_\pm(x)}.
\end{eqnarray}
Then it follows that
\begin{eqnarray}\label{L.harm2.2.1.return}
U_\pm'(x)m_1(x)+\frac{1}{2}U_\pm''(x)m_2(x)
&=& e^{-Q_\pm(x)} \Bigl(-m_1(x)+(q(x)\pm p(x))\frac{m_2(x)}{2}\Bigr)\nonumber\\
&=& \frac{m_2(x)}{2}e^{-Q_\pm(x)}
\biggl(-\frac{2m_1(x)}{m_2(x)}+q(x)\pm p(x)\biggr)\nonumber\\
&=& \pm\frac{m_2(x)}{2}e^{-Q_\pm(x)} p(x) (1+o(1)),
\end{eqnarray}
by Lemma \ref{l:risk} which yields
\begin{eqnarray*}
\frac{2m_1(x)}{m_2(x)}
&=& q(x)+o(p(x)+1/x^2))\quad\mbox{as }x\to\infty.
\end{eqnarray*}
It follows from \eqref{risk.8.tail} and  \eqref{U.12.prime.return} that
\begin{eqnarray}\label{L.harm2.2.5.return}
U_\pm'(x)\E\{\xi(x);|\xi(x)|>x/2\}
+\frac{1}{2} U_\pm''(x)\E\{\xi^2(x);|\xi(x)|>x/2\}
&=& o(p(x)e^{-Q_\pm(x)}).\nonumber\\
\end{eqnarray}
Finally, let us estimate the last term in \eqref{L.harm2.2.return}.
Notice that by the condition \eqref{r.p.prime.return} on the derivative of $p(x)$,
\begin{eqnarray*}
U_\pm'''(x) &=& \bigl(q'(x)\pm p'(x)-(q(x)\pm p(x))^2\bigr)e^{-Q_\pm(x)}\\
&=& O(1/x^2)e^{-Q_\pm(x)},
\end{eqnarray*}
hence, 
\begin{eqnarray*}
U_\pm'''(x+y) &=& O(1/x^2)e^{-Q_\pm(x)}
\end{eqnarray*}
as $x\to\infty$ uniformly for $|y|\le x/2$ which implies
\begin{eqnarray*}
\bigl|\E\bigl\{U_\pm'''(x+\theta\xi(x))\xi^3(x);
|\xi(x)|\le x/2\bigr\}\bigr|
&\le& \frac{c_1}{x^2}\E\bigl\{|\xi^3(x)|;\ |\xi(x)|\le x/2\bigr\}e^{-Q_\pm(x)}.
\end{eqnarray*}
Then, in view of \eqref{cond.3.moment.return},
\begin{eqnarray}\label{full.vs.cond.return}
\bigl|\E\bigl\{U_\pm'''(x+\theta\xi(x))\xi^3(x);\ |\xi(x)|\le x/2\bigr\}\bigr|
&=& o\bigl(p(x)e^{-Q_\pm(x)}\bigr).
\end{eqnarray}
Substituting \eqref{L.harm2.2.1.return}--\eqref{full.vs.cond.return} 
into \eqref{L.harm2.2.return},  we obtain that
\begin{eqnarray}\label{L.harm2.2.2.return}
\E\{U_\pm(x+\xi(x))-U_\pm(x);\ |\xi(x)|\le x/2\}
&=& \pm m_2(x) p(x)(1+o(1))e^{-Q_\pm(x)}.\nonumber\\[-1mm]
\end{eqnarray}
Substituting \eqref{L.harm2.2a.return}---or \eqref{L.harm2.2b.return}---and
\eqref{L.harm2.2.2.return} into \eqref{L.harm2.1.return}
and recalling that $p_1(x)\le p(x)$,
we finally come to the desired conclusions.
\end{proof}

Lemma \ref{L.Lyapunov.return} implies the following result.

\begin{corollary}\label{cor:lyapunov.return}
There exists an $\widehat x$ such that, for all $x>\widehat x$,
\begin{eqnarray*}
\E U_-(x+\xi(x)) &\le& U_-(x),\\
\E U_+(x+\xi(x)) &\ge& U_+(x).
\end{eqnarray*}
\end{corollary}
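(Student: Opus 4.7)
The plan is to read off both statements directly from Lemma \ref{L.Lyapunov.return}, since that lemma already pins down the sign and the leading-order size of the one-step increments of $U_\pm$ applied to the chain.

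First I would recall that by construction $p(x)>0$ for all $x$, and by \eqref{Upm.def} also $e^{-Q_\pm(x)}>0$ for all $x$; in particular $p(x)e^{-Q_\pm(x)}$ is a strictly positive quantity. Hence the relations
\begin{eqnarray*}
\E U_+(x+\xi(x))-U_+(x) &=& p(x)(1+o(1))e^{-Q_+(x)},\\
\E U_-(x+\xi(x))-U_-(x) &=& -p(x)(1+o(1))e^{-Q_-(x)},
\end{eqnarray*}
proved in Lemma \ref{L.Lyapunov.return}, show that each left-hand side differs, for large $x$, from something of a definite sign by a factor $1+o(1)$. Concretely, since $1+o(1)>1/2$ for all sufficiently large $x$, there exists some $\widehat x$ such that, for every $x>\widehat x$, the factor $1+o(1)$ in both formulas is positive, which forces
\begin{eqnarray*}
\E U_+(x+\xi(x))-U_+(x) &>& 0,\\
\E U_-(x+\xi(x))-U_-(x) &<& 0,
\end{eqnarray*}
giving the two inequalities of the corollary (with strict inequality, a fortiori with the non-strict inequality stated).

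There is no real obstacle here: the entire content of the corollary is that the $o(1)$ term in Lemma \ref{L.Lyapunov.return} does not flip the sign of the leading order, which is immediate from the definition of $o(1)$. The only bookkeeping point is to take $\widehat x$ as the maximum of the two thresholds produced by Lemma \ref{L.Lyapunov.return} for the ``$+$'' and ``$-$'' cases, so that both estimates hold simultaneously for every $x>\widehat x$.
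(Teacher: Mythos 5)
Your argument is correct and is exactly what the paper intends: the paper offers no separate proof of the corollary, merely noting that it follows from Lemma \ref{L.Lyapunov.return}, and you spell out the immediate sign argument (that $p(x)e^{-Q_\pm(x)}>0$ and the $1+o(1)$ factor is eventually positive, so $\widehat x$ can be taken as the larger of the two thresholds).
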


\begin{proof}[Proof of Theorem \ref{thm:risk}]
The process $U_-(R_n)$ is bounded above by $U_-(0)$.
Let $\widehat x$ be any level guaranteed by the last corollary,
$B=(-\infty,\widehat x]$ and $\tau_B=\min\{n\ge 1:X_n\in B\}$.

By Corollary \ref{cor:lyapunov.return}, $U_-(R_{n\wedge\tau_B})$
is a bounded supermartingale. 
Hence by the optional stopping theorem, for $z>\widehat x$ and $x\in(\widehat x,z)$,
$$
\E_x U_-(R_{\tau_B\wedge\tau_{(z,\infty)}})\ \le\ \E_x U_-(R_0)\ =\ U_-(x).
$$
Letting $z\to\infty$ we conclude that
\begin{eqnarray*}
\E_x\{U_-(R_{\tau_B});\ \tau_B<\infty\} &=& 
\lim_{z\to\infty}\E_x\{U_-(R_{\tau_B});\ \tau_B<\tau_{(z,\infty)}\}\\
&=& \lim_{z\to\infty}\E_x U_-(R_{\tau_B\wedge\tau_{(z,\infty)}})
-\lim_{z\to\infty}\E_x\{U_-(R_{\tau_{(z,\infty)}});\ \tau_B>\tau_{(z,\infty)}\}\\
&\le& U_-(x)-0\ =\ U_-(x).
\end{eqnarray*}
On the other hand, since $U_-$ is decreasing,
\begin{eqnarray*}
\E_x\{U_-(R_{\tau_B});\ \tau_B<\infty\} &\ge& U_-(\widehat x)\P_x\{\tau_B<\infty\}.
\end{eqnarray*}
Therefore,
\begin{eqnarray}\label{tau.B.infty.U-}
\P_x\{\tau_B<\infty\} &\le& \frac{U_-(x)}{U_-(\widehat x)},
\end{eqnarray}
which implies, by \eqref{equiv.for.U.return}, that, 
for some constant $c_2<\infty$,
$$
\P_x\{R_n\le \widehat x\mbox{ for some }n\}\ \le\ c_2U(x)
\quad\mbox{for all }x>\widehat x.
$$
Thus,
\begin{eqnarray*}
\P_x\{R_n\le 0\mbox{ for some }n\}
&\le& 
\P_x\{R_n\le \widehat x\mbox{ for some }n\}\\ 
&\le& c_2U(x)
\quad\mbox{for all }x>\widehat x.
\end{eqnarray*}
This gives the desired upper bound.

On the other hand, the process $\{U_+(R_{n\wedge\tau_B})\}$ 
is a bounded submartingale
due to the lower bound provided by Corollary \ref{cor:lyapunov.return}.
Hence again by the optional stopping theorem, for $x>x_0$,
$$
\E_x\{U_+(R_{\tau_B});\ \tau_B<\infty\}\ \ge\ \E_x U_+(R_0)\ =\ U_+(x).
$$
On the other hand, since $U_+$ is bounded by $U_+(0)$,
$$
\E_x\{U_+(R_{\tau_B});\ \tau_B<\infty\}\
\le\ U_+(0)\P_x\{\tau_B<\infty\}.
$$
This allows us to deduce a lower bound
\begin{eqnarray*}
\P_x\{\tau_B<\infty\} &\ge& \frac{U_+(x)}{U_+(0)},
\end{eqnarray*}
which completes the proof of the lower bound, 
for some constant $c_1>0$,
$$
\P_x\{R_n\le \widehat x\mbox{ for some }n\}\ \ge\ c_1U(x)
\quad\mbox{for all }x>\widehat x,
$$
due to \eqref{equiv.for.U.return}.
To complete the proof of the lower bound it remains to refer to the
arguments in \eqref{lower.hat.0}.
\end{proof}

\section{Approaching critical premium rate at rate of $\theta/x^\alpha$}
\label{sec:rate.hx}

In this section we consider the case \eqref{risk.2.hx} with some $\alpha\in(0,1)$.
In order to understand the asymptotic behaviour of the ruin probability
under this rate of approaching the critical value $v_c$, we first derive asymptotic estimates
for the moments of $V_x(\tau)-x$. Define
\begin{eqnarray*}
\gamma:=\min\{k\ge1:\alpha k>1\}.
\end{eqnarray*}

\begin{lemma}\label{lem:risk.2.V}
Let $\E\tau^\gamma<\infty$ and there exists an $x_0\ge0$ such that
\begin{eqnarray}\label{risk.2.v12}
v_-(x) &\le& v(x)\ \le\ v_+(x)\quad\mbox{for all }x\ge x_0,
\end{eqnarray}
where both $v_-(x)$ and $v_+(x)$ are decreasing functions on
$[x_0,\infty)$. Then, for all $k\le\gamma$,
\begin{eqnarray}\label{risk.2.bounds}
\E\tau^k \left(v_-(x+\tau v_+(x))\right)^k\ \le\ \E(V_x(\tau)-x)^k &\le& v_+^k(x)\E\tau^k,\quad x\ge x_0.
\end{eqnarray}
If, in addition, $\E\tau^{\gamma+1-\alpha}<\infty$ and 
\eqref{risk.2.hx} holds true, then there exists an integrable
decreasing function $p_1(x)$ such that, for all $k\le\gamma$,
\begin{eqnarray}\label{risk.2.1}
\E(V_x(\tau)-x)^k &=&
(v_c+\theta/x^\alpha)^k\E\tau^k+O(p_1(x))
\quad\mbox{as }x\to\infty.
\end{eqnarray}
\end{lemma}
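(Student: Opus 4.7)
My plan is to prove part (1) by direct comparison on the integral equation for $V_x$, and then derive part (2) as a refined asymptotic by substituting the natural sandwich functions and controlling the resulting error.

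For part (1), start from $V_x(t)-x=\int_0^t v(V_x(s))\,ds$ and use $v\ge 0$ to get $V_x(s)\ge x$ for all $s\ge 0$. Combined with $v\le v_+$ on $[x_0,\infty)$ and the decrease of $v_+$, this yields $v(V_x(s))\le v_+(V_x(s))\le v_+(x)$, hence $V_x(t)-x\le v_+(x)t$; raising to the $k$th power and taking expectation against the law of $\tau$ produces the upper bound in \eqref{risk.2.bounds}. Plugging this back, for $s\in[0,\tau]$ we have $V_x(s)\le x+\tau v_+(x)$; the decrease of $v_-$ together with $v\ge v_-$ gives $v(V_x(s))\ge v_-(V_x(s))\ge v_-(x+\tau v_+(x))$. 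Integrating over $[0,\tau]$ and raising to the $k$th power yields the lower bound.

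For part (2), apply part (1) with $v_\pm(y):=v_c+\theta/y^\alpha\pm p(y)$, which sandwich $v$ by \eqref{risk.2.hx} and are eventually decreasing (both $y^{-\alpha}$ and $p$ are decreasing). The upper bound from part (1) reads $(v_c+\theta/x^\alpha+p(x))^k\E\tau^k$; a first-order expansion in the additive argument $p(x)$, using boundedness of $v_c+\theta/x^\alpha$, yields $(v_c+\theta/x^\alpha)^k\E\tau^k+O(p(x))$, which is of the required form since $\E\tau^k\le\E\tau^\gamma<\infty$ and $p$ is integrable.

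For the matching lower bound, set $a:=v_c+\theta/x^\alpha$ and $\phi_x(\tau):=v_c+\theta/(x+\tau v_+(x))^\alpha-p(x+\tau v_+(x))$, so that part (1) gives $\E(V_x(\tau)-x)^k\ge\E[\tau^k\phi_x(\tau)^k]$. Writing $\phi_x(\tau)=a-b(\tau)$ with $b(\tau)=\theta(x^{-\alpha}-(x+\tau v_+(x))^{-\alpha})+p(x+\tau v_+(x))\ge 0$, for large $x$ one has $b\le a$ and the elementary inequality $(a-b)^k\ge a^k-ka^{k-1}b$ reduces the deficit from $a^k\E\tau^k$ to $O(\E[\tau^k b(\tau)])$. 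The $p$-part of $b$ contributes at most $p(x)\E\tau^k$, which is integrable. For the $\theta$-part, use the universal estimate $0\le 1-(1+u)^{-\alpha}\le C\min(u,u^\alpha)$ for $u\ge 0$ and split the expectation according to whether $\tau v_+(x)\le x$; on each piece, the interpolation $\tau^m\le M^\alpha\tau^{m-\alpha}$ valid on $\{\tau\le M\}$ reduces everything to integrals against the finite moment $\E\tau^{\gamma+1-\alpha}$.

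The main obstacle I anticipate is at the critical index $k=\gamma$: the naive estimate $\E\tau^{\gamma+1}/x^{1+\alpha}$ is unavailable, and producing an integrable $p_1$ requires that the split threshold $M=M(x)$ separating the two $\tau$-regimes be tuned to match precisely the exponent $\gamma+1-\alpha$ in the moment hypothesis, which is exactly why that particular exponent appears in the assumption. Verifying that both the small- and large-$\tau$ contributions simultaneously admit decreasing integrable majorants, and that a single such majorant dominates the errors uniformly over $k\in\{1,\dots,\gamma\}$, is the main technical step; one then takes $p_1$ to be the maximum of these majorants together with $p$ itself to obtain the single decreasing integrable function advertised in the statement.
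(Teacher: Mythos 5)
Your Part (1) is correct and identical in spirit to the paper's argument: monotone pre/post bounds on $V_x$ from the sandwich on $v$. The upper bound of Part (2) is also as in the paper. The difficulty is entirely in your lower bound, and there is a genuine gap there.

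Your reduction to controlling $\E[\tau^k b(\tau)]$, with $b$ the nonnegative defect, is fine and is essentially the binomial expansion the paper performs. The problem is how you then propose to bound the ``$\theta$''-part. Two issues. First, the estimate $1-(1+u)^{-\alpha}\le C\min(u,u^\alpha)$ is the wrong envelope for large $u$: on the large-$\tau$ piece it yields $\frac{C}{x^{2\alpha}}\E\{\tau^{\gamma+\alpha};\tau>M\}$, and for $\alpha\ge 1/2$ this requires moments of $\tau$ strictly above $\gamma+1-\alpha$, which you do not have. The paper uses the cap $1-(1+u)^{-\alpha}\le \min(\alpha u,1)$, so the large-$\tau$ piece becomes $\frac{C}{x^\alpha}\E\{\tau^\gamma;\tau>x\}$, whose integral is exactly $\frac{1}{1-\alpha}\E\tau^{\gamma+1-\alpha}<\infty$. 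Second, and more seriously, your treatment of the small-$\tau$ piece at the critical index $k=\gamma$ via the pointwise interpolation $\tau^{\gamma+1}\le M^\alpha\tau^{\gamma+1-\alpha}$ gives, with the natural cut $M\asymp x$, the bound $\frac{CM^\alpha}{x^{1+\alpha}}\E\tau^{\gamma+1-\alpha}\asymp C'/x$, which is not integrable. Tuning $M(x)$ does not repair this: shrinking $M$ helps the small-$\tau$ piece but worsens the large-$\tau$ piece by exactly the compensating amount, because $\E\tau^{\gamma+1-\alpha}<\infty$ is the borderline moment and a Fubini calculation shows the two contributions trade off one-for-one. No pointwise bound of the form $O(1/x)$ can be converted into an integrable majorant without further input.

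The missing ingredient is the paper's Lemma \ref{l:p.V.maj.o}, which shows that $\frac{1}{x^{1+\alpha}}\E\{\tau^{\gamma+1};\tau\le x\}$ admits a decreasing, integrable majorant $p(x)$. The proof is not a pointwise interpolation but an integration-by-parts followed by Markov's inequality, namely
\[
\E\{\tau^{\gamma+1};\tau\le x\}\le (\gamma+1)\int_0^x y^\alpha\,\E\{\tau^{\gamma-\alpha};\tau>y\}\,dy,
\]
and then a Fubini computation shows $\int_0^\infty p(x)\,dx=\E\tau^{\gamma+1-\alpha}/\alpha<\infty$. The key is that $\E\{\tau^{\gamma-\alpha};\tau>y\}\to 0$, so the Ces\`aro average inside is genuinely $o(1)$, which is the improvement over $O(1)$ that the pointwise interpolation cannot see. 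Your proposal correctly identifies where the difficulty is, but the fix it sketches does not actually close it; you need this lemma (or an equivalent averaging argument) to finish.
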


\begin{proof}
Fix some $x\ge x_0$. Due to \eqref{risk.2.v12}, $v(z)\le v_+(x)$ for all $z\ge x$. Hence,
\begin{eqnarray}\label{risk.2.v2}
\nonumber
V_x(t) &=& x+\int_0^t v(V_x(s))ds\\
&\le& x+\int_0^t v_+(x)ds
\ =\ x+tv_+(x),
\end{eqnarray}
and the inequality on the right hand side of \eqref{risk.2.bounds} follows.
It follows from the left hand side inequality in \eqref{risk.2.v12}
and from the last upper bound for $V_x(t)$ that
\begin{eqnarray}\label{risk.2.v1}
V_x(t)-x &\ge& \int_0^t v_-(V_x(t))ds
\ \ge\ tv_-(x+tv_+(x)),
\end{eqnarray}
and the left hand side bound in \eqref{risk.2.bounds} is proven.

Owing to \eqref{risk.2.hx}, $v(z)$ is sandwiched between the two
eventually decreasing functions $v_{\pm}(z):=v_c+\theta/z^\alpha\pm p(z)$.
Therefore, applying the right hand side bound in \eqref{risk.2.bounds} we get
\begin{eqnarray}\label{risk.2.above}
\E(V_x(\tau)-x)^k &\le& (v_c+\theta/x^\alpha+p(x))^k\E\tau^k\nonumber\\
&=& (v_c+\theta/x^\alpha)^k\E\tau^k+O(p(x))\quad\mbox{as }x\to\infty.
\end{eqnarray}
From the lower bound in \eqref{risk.2.bounds} we deduce that, for all $k\le\gamma$,
\begin{eqnarray*}
\E(V_x(\tau)-x)^k &\ge&
\E\tau^k\Bigl(v_c+\frac{\theta}{(x+\tau v_+(x))^\alpha}-p(x)\Bigr)^k\\
&\ge& \E\tau^k\Bigl(v_c+\frac{\theta}{(x+\overline v\tau)^\alpha}\Bigr)^k+O(p(x)),
\quad \overline v=\sup_z v(z).
\end{eqnarray*}
By the inequality $1/(1+y)^\alpha\ge 1-\alpha y\wedge 1$,
we infer that, for $c_2=\alpha\overline v$,
\begin{eqnarray*}
\frac{1}{(x+\overline vt)^\alpha}
&\ge& \frac{1}{x^\alpha}\Bigl(1-\frac{c_2t}{x}\wedge 1\Bigr).
\end{eqnarray*}
Therefore, for all $k\le\gamma$,
\begin{eqnarray}\label{risk.2.3}
\E(V_x(\tau)-x)^k &\ge&
\E\tau^k\Bigl(v_c+\frac{\theta}{x^\alpha}
-\frac{c_2\theta\tau}{x^{\alpha+1}}\I\{\tau\le x/c_2\}
-\frac{\theta}{x^\alpha}\I\{\tau>x/c_2\}\Bigr)^k
+O(p(x))\nonumber\\
&\ge& \Bigl(v_c+\frac{\theta}{x^\alpha}\Bigr)^k\E\tau^k
-\frac{c_3}{x^\alpha}\E\{\tau^k;\ \tau>x/c_2\}\nonumber\\
&&\hspace{10mm} -c_3\sum_{j=1}^k\frac{1}{x^{j(\alpha+1)}}\E\{\tau^{k+j};\ \tau\le x/c_2\}
-c_3p(x),
\hspace{5mm}
\end{eqnarray}
for some $c_3<\infty$. Then, due to the integrability of $p(x)$,
in order to prove that
\begin{eqnarray}\label{risk.2.below}
\E(V_x(\tau)-x)^k &\ge& (v_c+\theta/x^\alpha)^k\E\tau^k-p_1(x)
\end{eqnarray}
for some decreasing integrable function $p_1(x)$, it suffices to show that
$$
x^{-\alpha}\E\{\tau^\gamma;\ \tau>x\}
$$
and
$$
x^{-j(\alpha+1)}\E\{\tau^{\gamma+j};\ \tau\le x\}
$$
are bounded by decreasing integrable at infinity functions.
Indeed, the integral of the first function---which decreases itself---is finite 
due to the finiteness of the $(\gamma+1-\alpha)$ moment of $\tau$. 
Concerning the second function, first notice that
\begin{eqnarray*}
x^{-j(\alpha+1)}\E\{\tau^{\gamma+j};\ \tau\le x\} &\le&
\frac{\E\{\tau^{\gamma+1};\ \tau\le x\}}{x^{1+\alpha}},\quad j\ge 1.
\end{eqnarray*}
The right hand side is bounded by a decreasing integrable at infinity function
due to the moment condition on $\tau$ and  Lemma \ref{l:p.V.maj.o}.
So, \eqref{risk.2.below} is proven
which together with \eqref{risk.2.above} completes the proof.
\end{proof}

\begin{proposition}\label{prop:risk.2}
Assume the rate of convergence \eqref{risk.2.hx}.
If both $\E\tau^{1+\gamma}$ and $\E\xi^{1+\gamma}$ are finite,
then, for all $k\le\gamma$,
\begin{eqnarray*}
m_k(x) &=&
\sum_{j=0}^k\frac{a_{k,j}}{x^{\alpha j}} +O(x^{\alpha(k-1)}p_2(x))
\quad\mbox{as }x\to\infty,
\end{eqnarray*}
where $p_2(x)$ is a decreasing integrable at infinity function and
\begin{eqnarray*}
a_{k,j} &:=& {k\choose j}\theta^j \E\tau^j(v_c\tau-\xi)^{k-j},
\quad j\le k\le\gamma.
\end{eqnarray*}
In addition,
\begin{eqnarray}\label{eq:x^alpha-tail}
\E\{|\xi^k(x)|;\ |\xi(x)|>x^\alpha\} &=& o(x^{\alpha(k-1)}p_2(x))
\quad\mbox{as }x\to\infty.
\end{eqnarray}
\end{proposition}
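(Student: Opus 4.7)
The plan is to expand $\xi(x)^k=(V_x(\tau)-x-\xi)^k$ by the binomial theorem, exploit the independence of $\tau$ and $\xi$ to split the expectation, and then apply Lemma~\ref{lem:risk.2.V} to insert the asymptotics for the moments of $V_x(\tau)-x$.

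Since $V_x(\tau)$ is a deterministic function of $\tau$ and $\tau$ is independent of $\xi$, taking the expectation of the binomial expansion produces
$$m_k(x)=\sum_{i=0}^k\binom{k}{i}\E(V_x(\tau)-x)^i\,\E(-\xi)^{k-i}.$$
For each $i\le k\le\gamma$ Lemma~\ref{lem:risk.2.V} applies and gives $\E(V_x(\tau)-x)^i=(v_c+\theta/x^\alpha)^i\E\tau^i+O(p_1(x))$. I would then expand $(v_c+\theta/x^\alpha)^i$ binomially, interchange the order of summation, and use the identity $\binom{k}{i}\binom{i}{j}=\binom{k}{j}\binom{k-j}{i-j}$ with the substitution $m=i-j$ to collect equal powers of $1/x^\alpha$. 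Independence of $\tau$ and $\xi$ then rewrites the coefficient of $1/x^{\alpha j}$ as
$$\binom{k}{j}\theta^j\sum_{m=0}^{k-j}\binom{k-j}{m}v_c^m\E\tau^{j+m}\E(-\xi)^{k-j-m}=\binom{k}{j}\theta^j\E\bigl[\tau^j(v_c\tau-\xi)^{k-j}\bigr]=a_{k,j},$$
as required. The aggregated error from the $k+1$ invocations of Lemma~\ref{lem:risk.2.V} is still $O(p_1(x))$, which is absorbed into $O(x^{\alpha(k-1)}p_2(x))$ whenever $p_2\ge p_1$ since $k\ge 1$ implies $x^{\alpha(k-1)}\ge 1$.

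For the tail estimate \eqref{eq:x^alpha-tail} I would use $0\le V_x(\tau)-x\le\overline v\tau$ together with $\xi\ge 0$ to conclude $|\xi(x)|\le\max(\overline v\tau,\xi)$, so that $\{|\xi(x)|>x^\alpha\}\subseteq\{\overline v\tau>x^\alpha\}\cup\{\xi>x^\alpha\}$. Splitting the expectation along these two events and using independence of $\tau$ and $\xi$ to handle the cross terms reduces the problem to controlling tail moments of the form $\E\{\tau^k;\tau>cx^\alpha\}$ and $\E\{\xi^k;\xi>cx^\alpha\}$ for $k\le\gamma$. The moment assumptions $\E\tau^{1+\gamma}<\infty$ and $\E\xi^{1+\gamma}<\infty$ in conjunction with Lemma~\ref{l:p.V.maj.o} produce a decreasing integrable function $p_3(x)$ for which these tail expectations are $o(x^{\alpha(k-1)}p_3(x))$; redefining $p_2:=\max(p_1,p_3)$ keeps $p_2$ decreasing and integrable and makes both assertions of the proposition hold simultaneously. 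The main difficulty is exactly this bookkeeping: the reindexing that collapses the double sum into the compact coefficients $a_{k,j}$ is combinatorially delicate, and the several error terms arising from Lemma~\ref{lem:risk.2.V}, from the truncation of $\xi(x)$, and from the residual tail moments of $\tau$ and $\xi$ must all be subsumed under one common majorant $p_2$.
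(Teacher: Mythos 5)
Your plan for the main expansion coincides with the paper's proof: binomial expansion of $(V_x(\tau)-x-\xi)^k$, independence of $\tau$ and $\xi$, substitution of $\E(V_x(\tau)-x)^i=(v_c+\theta/x^\alpha)^i\E\tau^i+O(p_1(x))$ from Lemma~\ref{lem:risk.2.V}, and the reindexing via $\binom{k}{i}\binom{i}{j}=\binom{k}{j}\binom{k-j}{i-j}$ to produce $a_{k,j}=\binom{k}{j}\theta^j\E\tau^j(v_c\tau-\xi)^{k-j}$. That part is correct and is exactly the paper's argument.

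For the tail estimate \eqref{eq:x^alpha-tail}, your reduction to tail moments of $\tau$ and $\xi$ via $0\le V_x(\tau)-x\le\overline v\tau$ is also in line with the paper, but your citation of Lemma~\ref{l:p.V.maj.o} is misplaced: that lemma controls \emph{truncated-above} moments of high order, namely $\sup_\theta\E\{\xi_\theta^{\gamma+1};\ \xi_\theta\le x\}$, whereas here you need bounds on \emph{tail} moments of lower order, $\E\{\tau^k;\ \tau>cx^\alpha\}$ and $\E\{\xi^k;\ \xi>cx^\alpha\}$ with $k\le\gamma$. The paper does not invoke any appendix lemma at this point; it uses a direct Markov-type bound: since $\E\tau^{\gamma+1}<\infty$,
\begin{equation*}
\E\{\tau^k;\ \tau>x^\alpha/\overline v\}
\le \Bigl(\frac{\overline v}{x^\alpha}\Bigr)^{\gamma+1-k}\E\{\tau^{\gamma+1};\ \tau>x^\alpha/\overline v\}
= o\bigl(x^{-\alpha(\gamma+1-k)}\bigr),
\end{equation*}
so that $x^{-\alpha(k-1)}\E\{\tau^k;\ \tau>x^\alpha/\overline v\}=o(x^{-\alpha\gamma})$, and likewise for $\xi$; since $\alpha\gamma>1$ by the definition of $\gamma$, the function $x^{-\alpha\gamma}$ is already decreasing and integrable at infinity, which is all that is required. (The paper also uses the cleaner pointwise inequality $|\xi(x)|^k\I\{|\xi(x)|>x^\alpha\}\le(V_x(\tau)-x)^k\I\{V_x(\tau)-x>x^\alpha\}+\xi^k\I\{\xi>x^\alpha\}$, which avoids the cross terms from the union bound altogether.) Replacing your appeal to Lemma~\ref{l:p.V.maj.o} by this elementary estimate closes the only real gap in your argument.
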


\begin{proof}
It follows from the definition of $\xi(x)$ that
\begin{eqnarray*}
\E\xi^k(x) &=&
\E(V_x(\tau)-x-\xi)^k
\ =\ \sum_{i=0}^k{k\choose i}\E(V_x(\tau)-x)^i\E(-\xi)^{k-i}.
\end{eqnarray*}
Applying Lemma \ref{lem:risk.2.V}, we then obtain
\begin{eqnarray*}
m_k(x)\ :=\ \E\xi^k(x) &=& 
\sum_{i=0}^k{k\choose i}\Bigl(v_c+\frac{\theta}{x^\alpha}\Bigr)^i
\E\tau^i\E(-\xi)^{k-i}+O(p_1(x))\\
&=& \sum_{i=0}^k{k\choose i}\E\tau^i\E(-\xi)^{k-i}
\sum_{j=0}^i{i\choose j}v_c^{i-j}\Bigl(\frac{\theta}{x^\alpha}\Bigr)^j+O(p_1(x))\\
&=:& \sum_{j=0}^k\frac{a_{k,j}}{x^{\alpha j}}+O(p_1(x))\quad\mbox{as }x\to\infty,
\end{eqnarray*}
where
\begin{eqnarray*}
a_{k,j} &:=& {k\choose j}\theta^j 
\sum_{i=j}^k{k-j\choose i-j}\E\tau^i\E(-\xi)^{k-i}v_c^{i-j}\\
&=& {k\choose j}\theta^j \E\sum_{i=0}^{k-j}{k-j\choose i}\tau^{i+j}(-\xi)^{k-j-i}v_c^i\\
&=& {k\choose j}\theta^j \E\tau^j(v_c\tau-\xi)^{k-j}.
\end{eqnarray*}

It is immediate from \eqref{risk.2.v2} that $V_x(\tau)-x\le \overline v\tau$
where $\overline v=\sup_z v(z)$. Then
\begin{eqnarray*}
\E\{|\xi^k(x)|;\ |\xi(x)|>x^\alpha\}
&\le& \E\{(V_x(\tau)-x)^k;\ V_x(\tau)-x>x^\alpha\}+\E\{\xi^k;\xi>x^\alpha\}\\
&\le& \overline v^k\E\{\tau^k;\ \tau>x^\alpha/\overline v\}
+\E\{\xi^k;\ \xi>x^\alpha\}.
\end{eqnarray*}
Since $\E\tau^{\gamma+1}<\infty$, for all $k\le\gamma$,
\begin{eqnarray*}
x^{-\alpha(k-1)}\E\{\tau^k;\ \tau>x^\alpha/\overline v\} 
&=& o\left(\frac{1}{x^{\alpha(k-1)} x^{\alpha(\gamma+1-k)}}\right)\\
&=& o\left(\frac{1}{x^{\alpha\gamma}}\right)\quad\mbox{as }x\to\infty.
\end{eqnarray*}
By the definition of the $\gamma$, 
$\alpha\gamma>1$. The function $1/x^{\alpha\gamma}$ is integrable at infinity.
The same arguments work for $\xi$, so the value of
$x^{-\alpha(k-1)}\E\{|\xi^k(x)|;\ |\xi(x)|>x^\alpha\}$ 
is bounded by a decreasing integrable at infinity function,
 and the proof is complete.
\end{proof}

Now we state the main result in this section.
\index{Cram\'er--Lundberg!classical model!ruin probability bounds}

\begin{theorem}\label{thm:risk.2}
Assume  \eqref{cond:Rgr0} and the rate of convergence \eqref{risk.2.hx}.
Let $\E\tau^{\gamma+1}<\infty$ and $\E e^{r\xi^{1-\alpha}}<\infty$ for some
$$
r\ >\frac{r_1}{1-\alpha},
$$
where
\begin{eqnarray}\label{eq:r1}
r_1 &:=& \frac{2\theta\E\tau}{\V\xi+v_c^2\V\tau}.
\end{eqnarray}
Then there exist constants $r_2$, $r_3$, \ldots, $r_{\gamma-1}\in\R$,
and $0<C_1<C_2<\infty$ such that
\begin{itemize}
\item[(i)] if $\alpha=1/(\gamma-1)$ for an integer $\gamma\ge 2$,
then, for $x>1$,
\begin{eqnarray}\label{risk.2.6}
\frac{C_1x^\alpha}{x^{r_{\gamma-1}}}
\exp\Biggl\{-\sum_{j=1}^{\gamma-2}\frac{r_j}{1-\alpha j} x^{1-\alpha j}\Biggr\}
\ \le\ \psi(x)\ \le\
\frac{C_2x^\alpha}{x^{r_{\gamma-1}}}
\exp\Biggl\{-\sum_{j=1}^{\gamma-2}\frac{r_j}{1-\alpha j} x^{1-\alpha j}\Biggr\},
\nonumber\\
\end{eqnarray}
\item[(ii)] if $\alpha<1/(\gamma-1)$ then
\begin{eqnarray}\label{risk.2.6*}
C_1x^\alpha\exp\Biggl\{-\sum_{j=1}^{\gamma-1}\frac{r_j}{1-\alpha j} x^{1-\alpha j}\Biggr\}
\ \le\ \psi(x)\ \le\
C_2x^\alpha\exp\Biggl\{-\sum_{j=1}^{\gamma-1}\frac{r_j}{1-\alpha j} x^{1-\alpha j}\Biggr\}.
\nonumber\\
\end{eqnarray}
\end{itemize}
\end{theorem}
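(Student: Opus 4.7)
My approach is to mirror the Lyapunov-function strategy of the proof of Theorem~\ref{thm:risk}: construct functions $U_\pm$ such that $\{U_-(R_{n\wedge\tau_B})\}$ is a bounded supermartingale and $\{U_+(R_{n\wedge\tau_B})\}$ a bounded submartingale for a suitable half-line $B=(-\infty,\widehat x]$, then apply the optional stopping theorem together with \eqref{lower.hat.0} to sandwich $\psi(x)$. The additional difficulty compared with Section~\ref{sec:rate.1x} is that the drift is now of order $1/x^\alpha$ with $\alpha<1$, so every power $1/x^{\alpha j}$ with $j\le\gamma-1$ in the expansion of $2m_1(x)/m_2(x)$ is non-integrable and must be absorbed into $Q$; consequently a Taylor expansion of order $\gamma$, rather than $3$, is needed.

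Using Proposition~\ref{prop:risk.2} I would divide the series for $2m_1(x)$ by that for $m_2(x)$ and truncate at order $x^{-\alpha\gamma}$, which is integrable because $\alpha\gamma>1$. This produces unique coefficients $r_1,\ldots,r_{\gamma-1}$ with $r_1$ as in \eqref{eq:r1}, together with a decreasing integrable remainder $p(x)$, such that
$$
\frac{2m_1(x)}{m_2(x)}=\sum_{j=1}^{\gamma-1}\frac{r_j}{x^{\alpha j}}+O(p(x)),\qquad x\to\infty.
$$
Set $q(x):=\sum_{j=1}^{\gamma-1}r_j/x^{\alpha j}$ for $x\ge1$ (smoothly extended on $[0,1]$), $Q(x):=\int_0^x q(y)\,dy$, and $U(x):=\int_x^\infty e^{-Q(y)}\,dy$. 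Integrating $q$ termwise yields in case (ii) the expression $\sum_{j=1}^{\gamma-1}r_j x^{1-\alpha j}/(1-\alpha j)+\mathrm{const}$, whereas in the resonant case (i) the $j=\gamma-1$ contribution becomes $r_{\gamma-1}\log x$ and produces the extra factor $x^{-r_{\gamma-1}}$ in $e^{-Q(x)}$; an integration by parts gives $U(x)\sim x^\alpha e^{-Q(x)}/r_1$, matching the prefactors in \eqref{risk.2.6}--\eqref{risk.2.6*}. Following \eqref{Upm.def} I introduce the perturbations $q_\pm:=q\pm p$, $Q_\pm(x):=\int_0^x q_\pm(y)\,dy$ and $U_\pm(x):=\int_x^\infty e^{-Q_\pm(y)}\,dy$; integrability of $p$ yields $U_\pm(x)\sim C_\pm U(x)$ as $x\to\infty$.

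The central analytic step is the analogue of Lemma~\ref{L.Lyapunov.return}:
$$
\E U_\pm(x+\xi(x))-U_\pm(x)=\pm p(x)(1+o(1))e^{-Q_\pm(x)}.
$$
I split the expectation on $\{|\xi(x)|\le x^\alpha\}$ and its complement. On the principal set I expand $U_\pm$ via Taylor's formula up to order $\gamma$; the derivatives satisfy $U_\pm^{(k)}(x)=O(q(x)^{k-1}e^{-Q_\pm(x)})$ and, combined with the expansions of $m_k(x)$ from Proposition~\ref{prop:risk.2}, the contributions of orders $k=1,\ldots,\gamma-1$ collapse into the single quantity $\tfrac12 m_2(x)(q_\pm(x)-2m_1(x)/m_2(x))e^{-Q_\pm(x)}$, which by the very choice of the $r_j$'s equals $\pm p(x)(1+o(1))e^{-Q_\pm(x)}$. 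The Taylor remainder of order $\gamma$ is $o(p(x)e^{-Q_\pm(x)})$ because $m_\gamma(x)=O(1)$ and $x^{-\alpha\gamma}$ is integrable.

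The delicate point I expect to dominate the technical work is the complement $\{|\xi(x)|>x^\alpha\}$; this is where the hypothesis $\E e^{r\xi^{1-\alpha}}<\infty$ with $r>r_1/(1-\alpha)$ becomes essential. The right tail $\xi(x)>x^\alpha$ is controlled by \eqref{eq:x^alpha-tail}. For the left tail $\xi(x)<-x^\alpha$, the inequality $V_x(\tau)-x\le\overline v\tau$ forces $\xi\ge(1-\epsilon)x$ on a set of essentially full $\tau$-probability (the event $\{\tau>\epsilon x\}$ being negligible under the polynomial moment assumption), and Markov's inequality applied to $e^{r\xi^{1-\alpha}}$ then yields $\P\{\xi(x)<-x^\alpha\}=O(\exp\{-r(1-\epsilon)^{1-\alpha}x^{1-\alpha}\})$; choosing $\epsilon$ so small that $r(1-\epsilon)^{1-\alpha}>r_1/(1-\alpha)$---which is possible precisely under the standing assumption---makes this tail $o(p(x)e^{-Q(x)})$, as required for the Taylor argument to close. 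Once the (super-/sub-)martingale property is established, the passage from the two-sided bound $c_1U(x)\le\P_x\{\tau_B<\infty\}\le c_2U(x)$ to the claimed inequalities on $\psi(x)$ is verbatim as in the proof of Theorem~\ref{thm:risk}, using the Markov-type estimate \eqref{lower.hat.0}.
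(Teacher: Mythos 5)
Your overall strategy mirrors the paper's (Lyapunov functions $U_\pm$ built from $e^{-Q}$, optional stopping, Taylor expansion of order $\gamma$), and the identification of the right tail via \eqref{eq:x^alpha-tail} and the closing argument via \eqref{lower.hat.0} are exactly right. However there are two genuine gaps in the middle that would make the proof fail as written.

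\textbf{Choice of the coefficients $r_j$.} You propose to define $q(x)=\sum_{j=1}^{\gamma-1}r_j x^{-\alpha j}$ from the asymptotic expansion of $2m_1(x)/m_2(x)$, and claim that after a Taylor expansion of $U_\pm$ to order $\gamma$ ``the contributions of orders $k=1,\ldots,\gamma-1$ collapse into $\tfrac12 m_2(x)\bigl(q_\pm(x)-2m_1(x)/m_2(x)\bigr)e^{-Q_\pm(x)}$.'' That identity only holds for $k=1,2$. For $k\ge 3$ one has $U_\pm^{(k)}(x)=(-1)^k\bigl(q^{k-1}(x)+o(p(x))\bigr)e^{-Q_\pm(x)}$, so the $k$-th summand contributes $(-1)^k\frac{q^{k-1}(x)}{k!}m_k(x)\,e^{-Q_\pm(x)}$, a term of size $x^{-\alpha(k-1)}e^{-Q_\pm(x)}$, which is \emph{not} $o(p(x)e^{-Q_\pm(x)})$ as soon as $(k-1)\alpha\le 1$, i.e.\ for every $k\le\gamma$. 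These terms do not cancel against $-m_1+\tfrac{q}{2}m_2$ on their own. The paper therefore determines the $r_j$ not by matching $q\approx 2m_1/m_2$ but by solving, recursively in the powers of $x^{-\alpha}$, the full identity \eqref{eq:q-moments}, namely
\begin{equation*}
-m_1(x)+\sum_{j=2}^\gamma(-1)^j\frac{m_j(x)}{j!}\,q^{j-1}(x)=o(p_3(x)),
\end{equation*}
in which $m_3,\ldots,m_\gamma$ do appear. This fixes $r_1$ exactly as in \eqref{eq:r1} (the first order agrees with your recipe), but the higher $r_j$ ($j\ge2$) differ from the coefficients of $2m_1/m_2$, and without the corrected values the Taylor sum retains a non-integrable residual and no super/submartingale can be produced.

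\textbf{Control of the left tail $\{\xi(x)<-x^\alpha\}$.} Your argument here does not go through. Since $V_x(\tau)-x\ge0$, the event $\{\xi(x)<-x^\alpha\}$ only forces $\xi>x^\alpha$, not $\xi\ge(1-\epsilon)x$; the inequality $V_x(\tau)-x\le\overline v\tau$ gives an upper bound on the drift contribution, which pushes in the wrong direction. So Markov's inequality applied to $e^{r\xi^{1-\alpha}}$ yields at best
$\P\{\xi(x)<-x^\alpha\}=O\bigl(e^{-r\,x^{\alpha(1-\alpha)}}\bigr)$,
and since $\alpha(1-\alpha)<1-\alpha$ this is far too weak to beat $e^{-Q(x)}\asymp e^{-\frac{r_1}{1-\alpha}x^{1-\alpha}}$. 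Crucially, even a sharp bound on $\P\{\xi(x)<-x^\alpha\}$ is not enough: one must control the expectation $\E\{U_\pm(x+\xi(x))-U_\pm(x);\ \xi(x)<-x^\alpha\}$, where $U_\pm(x+\xi(x))$ can be as large as $U_\pm(0)$. The paper's route is to exploit concavity of $Q$: from $Q(x-y)\ge Q(x)-Q(y)$ one gets
\begin{equation*}
U_\pm(x-y)-U_\pm(x)\le e^{2C_p}\,y\,e^{-Q(x)}e^{Q(y)},
\end{equation*}
whence the left-tail expectation is bounded by $e^{2C_p}e^{-Q(x)}\,\E\{\xi\,e^{Q(\xi)};\ \xi>x^\alpha\}$. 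It is precisely for this integrand, with $Q(y)\sim\frac{r_1}{1-\alpha}y^{1-\alpha}$, that the assumption $\E e^{r\xi^{1-\alpha}}<\infty$ with $r>r_1/(1-\alpha)$ is needed: it guarantees that $\E\{\xi e^{Q(\xi)};\ \xi>x^\alpha\}$ is a decreasing, integrable-at-infinity function, which is what is required to absorb this piece into the $o(p(x)e^{-Q_\pm(x)})$ error. Your proposal uses the exponential-moment hypothesis only for a cruder probability estimate, and this is not sufficient.

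In short, the plan is correct in shape but both the algebraic identity fixing the $r_j$ and the treatment of the large negative jumps need to be replaced by the finer arguments above before the super/submartingale estimates close.
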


\begin{proof}
We first show that there exist constants
$r_1,r_2,\ldots,r_{\gamma-1}$ such that
$$
q(x):=\sum_{j=1}^{\gamma-1}\frac{r_j}{(b+x)^{\alpha j}}
$$
satisfies 
\begin{equation}
\label{eq:q-moments}
-m_1(x)+\sum_{j=2}^\gamma(-1)^j\frac{m_j(x)}{j!}q^{j-1}(x)
=o(p_3(x)),
\end{equation}
where $p_3$ is a decreasing integrable function and $b$ is a positive number.

We can determine all these numbers recursively.
Indeed, as proven in Proposition~\ref{prop:risk.2},
\begin{eqnarray*}
m_1(x) &=& \frac{\theta\E\tau}{x^\alpha}+o(p_2(x))\quad\mbox{as }x\to\infty
\end{eqnarray*}
and
\begin{eqnarray*}
m_2(x) &=& \V\xi+v_c^2\V\tau+O(x^{-\alpha}+x^\alpha p_2(x))
\quad\mbox{as }x\to\infty.
\end{eqnarray*}
For $r_1$ defined defined in \eqref{eq:r1},
\begin{eqnarray*}
-m_1(x)+\sum_{j=2}^{\gamma} (-1)^j\frac{m_j(x)}{j!}q^{j-1}(x)
&=& O(x^{-2\alpha}+p_2(x))\quad\mbox{as }x\to\infty,
\end{eqnarray*}
for any choice of $r_2$, $r_3$, \ldots, $r_{\gamma-1}$.
Then we can choose $r_2$ such that
the coefficient of $x^{-2\alpha}$ is also zero, and so on.
It is clear that the numbers $r_1$, $r_2$, \ldots, $r_{\gamma-1}$ 
do not depend on the parameter $b$. 
Therefore, we can take $b$ so large that the function $q(x)$ is decreasing on $[0,\infty)$.

As in the previous section, we define 
$$
Q(x)=\int_0^x q(y)dy
\quad\text{and}\quad 
U(x)=\int_x^\infty e^{-Q(y)}dy,\ x\ge0.
$$
For $x<0$ we set $U(x)=U(0)$. It is immediate from the definition of 
$q(x)$ that 
\begin{eqnarray*}
Q(x) &=&\int_0^x \sum_{j=1}^{\gamma-1}\frac{r_j}{(b+z)^{\alpha j}} dz,\quad x\ge0.
\end{eqnarray*}
and
\begin{eqnarray*}
U(x) &=& \int_x^\infty \exp\Bigl\{-
\int_0^y \sum_{j=1}^{\gamma-1}\frac{r_j}{(b+z)^{\alpha j}} dz \Bigr\}dy,\quad x\ge0.
\end{eqnarray*}
We define also
$$
q_\pm(x)=q(x)\pm p(x),\quad 
Q_\pm(x)=\int_0^x q_\pm(y)dy
\quad\text{and}\quad 
U_\pm(x)=\int_x^\infty e^{-Q(y)}dy.
$$
We further assume that, for all $1\le k\le\gamma-1$,
\begin{eqnarray}\label{4.r.prime.gen.hy}
q^{(k)}(x) = o(q^\gamma(x)), &&
p^{(k)}(x) = o(q^\gamma(x))\quad\mbox{as }x\to\infty
\end{eqnarray}
and 
\begin{eqnarray}\label{4.r.gamma.hy}
q^\gamma(x) &=& o(p(x))\quad\mbox{as }x\to\infty.
\end{eqnarray}
If $q(x)\sim c/x^\alpha$ where $\gamma\alpha<2$,
then it follows from Lemma \ref{l:g.fin.p.2} that the condition on
the derivatives of $p(x)$
is always satisfied for a properly chosen function $p$,
so the condition \eqref{4.r.prime.gen.hy} on the derivatives
of $p$ does not restrict generality under this specific choice of $r(x)$.

It is clear that 
$$
U_\pm(x)\sim e^{\mp C_p}U(x)\quad\text{as }x\to\infty.
$$ 
Noting that
$$
\frac{U'(x)}{\left(\frac{1}{q(x)}e^{-Q(x)}\right)'}
=\frac{-e^{-Q(x)}}{(-q'(x)/q^2(x)-1)e^{-Q(x)}} \to 1
\quad\text{as }x\to\infty
$$
and applying the L'H\^{o}pital rule, we conclude that,
as $x\to\infty$,
\begin{equation}\label{eq:U-asymp}
U(x)\sim \frac{e^{-Q(x)}}{q(x)}
\quad\text{and}\quad
U_\pm(x)\sim e^{\mp C_p}\frac{e^{-Q(x)}}{q(x)}
\sim \frac{e^{-Q_\pm(x)}}{q(x)}.
\end{equation}

\begin{lemma}\label{L.Lyapunov.return.hy}
As $x\to\infty$, we have the following estimates:
\begin{eqnarray}\label{b.for.u.return.1.hy}
\E U_+(x+\xi(x))-U_+(x) &=&
\frac{\V\xi+v_c^2\V\tau+o(1)}{2}p(x)e^{-R_+(x)},\\
\label{b.for.u.return.2.hy}
\E U_-(x+\xi(x))-U_-(x) &=& -\frac{\V\xi+v_c^2\V\tau+o(1)}{2}p(x)e^{-R_-(x)}.
\end{eqnarray}
\end{lemma}

\begin{proof}
We start with the following decomposition:
\begin{eqnarray}\label{L.harm2.1.return1}
\E U_\pm(x+\xi(x))-U_\pm(x)
&=& \E\{U_\pm(x+\xi(x))-U_\pm(x);\ \xi(x)<-x^\alpha\}\nonumber\\
&&+\E\{U_\pm(x+\xi(x))-U_\pm(x);\ |\xi(x)|\le x^\alpha\}\nonumber\\
&& +\E\{U_\pm(x+\xi(x))-U_\pm(x);\ \xi(x)>x^\alpha\}.
\end{eqnarray}
The third term on the right hand side is negative because
$U_\pm$ decreases and it may be bounded below as follows:
\begin{eqnarray}\label{L.harm2.2a.return1}
\E\{U_\pm(x+\xi(x))-U_\pm(x);\ \xi(x)>x^\alpha\}
&\ge& -U_\pm(x)\P\{\xi(x)>x^\alpha\}\nonumber\\
&=& o\bigl(p_2(x)e^{-Q_\pm(x)}\bigr),
\end{eqnarray}
due to the upper bound \eqref{eq:x^alpha-tail} which implies
$\P\{\xi(x)>x^\alpha\}=o(p_1(x)/x^\alpha)$,
and due to the relation \eqref{eq:U-asymp}.

Further, the first term on the right hand side of \eqref{L.harm2.1.return1} is positive. 
To obtain an upper bound for that expectation we first notice that, 
due to the fact that $Q(z)$ is monotone increasing,
$$
U_\pm(x-y)-U_\pm(x)=\int_{x-y}^xe^{-Q_\pm(z)}dz
\le e^{2C_p}\int_{x-y}^xe^{-Q(z)}dz
\le e^{2C_p}ye^{-Q(x-y)}.
$$
Since $q(x)$ is chosen to be decreasing, $Q(z)$ is concave and, consequently, 
$$
Q(x-y)\ge Q(x)-Q(y).
$$
Using this inequality we obtain 
$$
U_\pm(x-y)-U_\pm(x)\le e^{2C_p}ye^{-Q(x)}e^{Q(y)}
$$
and 
\begin{eqnarray*}
\lefteqn{\E\{U_\pm(x+\xi(x))-U_\pm(x);\ \xi(x)<-x^\alpha\}}\\
&&\hspace{10mm}\le\ e^{2C_p}e^{-Q(x)}\E\{-\xi(x)e^{Q(\xi(x))};\xi(x)<-x^\alpha\}\\
&&\hspace{20mm}\le\ e^{2C_p}e^{-Q(x)}\E\{\xi e^{Q(\xi)};\xi>x^\alpha\}.
\end{eqnarray*}
The moment assumption on $\xi$ implies that the decreasing function 
$\E\{\xi e^{Q(\xi)};\xi>x^\alpha\}$ is integrable at infinity. As a result we have 
\begin{eqnarray}\label{L.harm2.2b.return1}
\E\{U_\pm(x+\xi(x))-U_\pm(x);\ \xi(x)<-x^\alpha\}
&=& o\bigl(p_1(x)e^{-Q_\pm(x)}\bigr).
\end{eqnarray}

To estimate the second term on the right hand side of \eqref{L.harm2.1.return1},
we make use of Taylor's expansion with $\gamma+1$ terms:
\begin{eqnarray}\label{4.L.harm2.2.hy}
\lefteqn{\E\{U_\pm(x+\xi(x))-U_\pm(x);\ |\xi(x)|\le x^\alpha\}}\nonumber\\
&=& \sum_{k=1}^\gamma\frac{U_\pm^{(k)}(x)}{k!} 
\E\{\xi^k(x);|\xi(x)|\le x^\alpha\}
\nonumber\\
& &\hspace{1cm}+\E\Bigl\{\frac{U_\pm^{(\gamma+1)}(x+\theta\xi(x))}{(\gamma+1)!}\xi^{\gamma+1}(x);\
|\xi(x)|\le x^\alpha\Bigr\},\nonumber\\
&=& \sum_{k=1}^\gamma\frac{U_\pm^{(k)}(x)}{k!}m_k(x)
-\sum_{k=1}^\gamma\frac{U_\pm^{(k)}(x)}{k!} 
\E\{\xi^k(x);|\xi(x)|> x^\alpha\}
\nonumber\\
& &\hspace{1cm}+\E\Bigl\{\frac{U_\pm^{(\gamma+1)}(x+\theta\xi(x))}{(\gamma+1)!}\xi^{\gamma+1}(x);\
|\xi(x)|\le x^\alpha\Bigr\},
\end{eqnarray}
where $0\le\theta=\theta(x,\xi(x))\le 1$. By the construction of $U_\pm$,
\begin{eqnarray}\label{4.U.12.prime.hy}
U_\pm'(x)=-e^{-Q_\pm(x)},\qquad
U_\pm''(x)=q_\pm(x)e^{-Q_\pm(x)}=(q(x)\pm p(x))e^{-Q_\pm(x)},
\end{eqnarray}
and, for $k=3$, \ldots, $\gamma+1$,
\begin{eqnarray*}
U_\pm^{(k)}(x) = -(e^{-Q_\pm(x)})^{(k-1)}
&=& (-1)^k\bigl(q_\pm^{k-1}(x)+o(p(x))\bigr)e^{-Q_\pm(x)}
\quad\mbox{as }x\to\infty,
\end{eqnarray*}
where the remainder terms in the parentheses on the right
are of order $o(p(x))$ by the conditions \eqref{4.r.prime.gen.hy}
and \eqref{4.r.gamma.hy}.
By the definition of $q_\pm(x)$,
\begin{eqnarray*}
q_\pm^{k-1}(x) &=& (q(x)\pm p(x))^{k-1}= q^{k-1}(x)+o(p(x))
\quad\mbox{for all }k\ge 3,
\end{eqnarray*}
which implies the relation
\begin{eqnarray}\label{4.U.k.prime.hy}
U_\pm^{(k)}(x) &=& (-1)^k\bigl(q^{k-1}(x)+o(p(x))\bigr)e^{-Q_\pm(x)}
\quad\mbox{as }x\to\infty.
\end{eqnarray}
From these equalities we get 
$|U_\pm^{(k)}(x)|\le Cx^{-\alpha(k-1)}e^{-Q_\pm(x)}$,
Combining this with \eqref{eq:x^alpha-tail}, we obtain
\begin{equation}
 \label{eq:sum_of_tais}
 \sum_{k=1}^\gamma\frac{U_\pm^{(k)}(x)}{k!} 
\E\{\xi^k(x);|\xi(x)|> x^\alpha\}=o\left(p_2(x)e^{-Q_\pm(x)}\right)
\quad\mbox{as }x\to\infty.
\end{equation}

It follows from the equalities \eqref{4.U.12.prime.hy}
and \eqref{4.U.k.prime.hy} that
\begin{eqnarray}\label{4.L.harm2.2.1.hy}
\lefteqn{\sum_{k=1}^\gamma \frac{U_\pm^{(k)}(x)}{k!}m_k(x)}\nonumber\\
&=& e^{-Q_\pm(x)} \biggl(\sum_{k=1}^\gamma
(-1)^k\frac{r^{k-1}(x)}{k!}m_k(x)
+o(p(x))\pm p(x)\frac{m_2(x)}{2}\biggr)\nonumber\\
&=& e^{-Q_\pm(x)} \biggl(o(p(x))\pm p(x)\frac{m_2(x)}{2}\biggr)
\quad\mbox{as }x\to\infty,
\end{eqnarray}
by the equality \eqref{eq:q-moments}.
Owing to the condition \eqref{4.r.prime.gen.hy}
on the derivatives of $r(x)$ and \eqref{4.r.gamma.hy},
\begin{eqnarray*}
U_\pm^{(\gamma+1)}(x) &=&
(-1)^{\gamma+1}(q^\gamma(x)+o(q^\gamma(x))) e^{-Q_\pm(x)}
\quad\mbox{as }x\to\infty.
\end{eqnarray*}
Then, the last term in
\eqref{4.L.harm2.2.hy} possesses the following bound:
\begin{eqnarray*}
\lefteqn{\Bigl|\E\Bigl\{\frac{U_\pm^{(\gamma+1)}(x+\theta\xi(x))}{(\gamma+1)!}\xi^{\gamma+1}(x);\
|\xi(x)|\le x^\alpha\Bigr\}\Bigr|}\\
&&\hspace{40mm} = O\bigl(q^\gamma(x)e^{-Q_\pm(x)}\bigr)
\E\bigl\{|\xi(x)|^{\gamma+1};\ |\xi(x)|\le x^\alpha\bigr\}\\
&&\hspace{40mm} = o\bigl(p(x)e^{-Q_\pm(x)}\bigr)
\quad\mbox{as }x\to\infty,
\end{eqnarray*}
by the condition \eqref{4.r.gamma.hy}. Therefore, it follows from
\eqref{4.L.harm2.2.hy}, \eqref{eq:sum_of_tais} and \eqref{4.L.harm2.2.1.hy} that
\begin{eqnarray*}
\lefteqn{\E\{U_\pm(x+\xi(x))-U_\pm(x);\ |\xi(x)|\le x^\alpha\}}\nonumber\\
&&\hspace{20mm} =\pm p(x)\frac{m^{[s(x)]}_2(x)}{2} e^{-Q_\pm(x)}
+o\bigl(p(x)e^{-Q_\pm(x)}\bigr)\quad\mbox{as }x\to\infty.
\end{eqnarray*}
Together with \eqref{L.harm2.2a.return1}, \eqref{L.harm2.2b.return1},
and \eqref{L.harm2.1.return1} this completes the proof.
\end{proof}
The remaining part of the proof repeats literally the final
part of the proof of Theorem~\ref{thm:risk} and we omit it.
\end{proof}

\section{Heavy-tailed claim sizes }
\label{sec:heavy-tails}

In this section we study the case where the distribution of the claim size is so heavy 
that the moment conditions in the theorems proved above are not met.  

We assume that $v(x)$ converges towards $v_c$ at rate 
$\theta/x$ and that the distribution of $\xi$ is regularly varying
at infinity with index $-(\beta+2)$ for some $\beta\in(0,\rho)$.  
Then $\E\xi^{\rho+2}$ is infinite and, consequently, Theorem~\ref{thm:risk} does not apply.

\begin{theorem}\label{thm:heavy-tails}
Assume the rate of convergence \eqref{risk.2} with some $\theta$ satisfying 
\eqref{eq:theta-cond}.
Assume also that $\E\tau^2\log(1+\tau)<\infty$ and that 
\begin{equation}\label{eq:xi-heavy}
\P\{\xi>x\}=x^{-2-\beta}L(x)
\end{equation}
for some slowly varying at infinity function $L(x)$ and $\beta\in(0,\rho)$. 
Then there exist constants $C_1$ and $C_2$ such that 
$$
C_1 \int_x^\infty y\P\{\xi>y\}dy
\le \psi(x)\le 
C_2 \int_x^\infty y\P\{\xi>y\}dy
\quad\mbox{for all }x>0.
$$
\end{theorem}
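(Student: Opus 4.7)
My plan is to follow the Lyapunov-function framework of Theorem~\ref{thm:risk}, replacing the power Lyapunov function $x^{-\rho}$ by one matching the integrated tail $J(x):=\int_x^\infty y\,\P\{\xi>y\}dy$. I would construct two decreasing functions $U_-,U_+$ with $U_\pm(x)$ asymptotically proportional to $J(x)$ as $x\to\infty$, such that $U_-(R_n)$ is a bounded supermartingale above some threshold (yielding the upper bound $\psi(x)\le C_2J(x)$) and $U_+(R_n)$ is a bounded submartingale (yielding the lower bound $\psi(x)\ge C_1J(x)$). Concretely, I would set $U_\pm(x)=\int_x^\infty e^{-Q_\pm(y)}dy$ with $Q_\pm(y)=\int_0^y q_\pm$, where $q$ is chosen so that $e^{-Q(y)}\sim cy\,\P\{\xi>y\}$ and $q_\pm=q\pm p$ for a small integrable perturbation $p$, exactly in the spirit of Section~\ref{sec:rate.1x}; Karamata's theorem on regular variation then gives $U_\pm(x)\sim c^\pm J(x)$ as $x\to\infty$.

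The heart of the proof is an analogue of Lemma~\ref{L.Lyapunov.return}: decompose $\E U_\pm(x+\xi(x))-U_\pm(x)$ over the three regions $\{\xi(x)<-x/2\}$, $\{|\xi(x)|\le x/2\}$ and $\{\xi(x)>x/2\}$. The middle region is handled by a second-order Taylor expansion combined with Lemma~\ref{l:risk}, producing a leading term of order $\P\{\xi>x\}$ whose sign is controlled by the Lamperti-type balance between $m_1(x)$ and $m_2(x)$; since $\beta<\rho$ this contribution is negative. The right-tail region contributes negligibly because $\xi(x)>x/2$ forces $\tau$ to be large, and this is controlled via \eqref{risk.8.tail}, exactly where the assumption $\E\tau^2\log(1+\tau)<\infty$ enters. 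The left-tail region is the new ingredient compared with Section~\ref{sec:rate.1x}: using the asymptotic $\P\{\xi(x)<-y\}\sim\P\{\xi>y\}$ as $y\to\infty$, which follows from the regular variation \eqref{eq:xi-heavy} together with finite moments of $V_x(\tau)-x$, one integrates the increments of $U_\pm$ against this tail and obtains a positive contribution also of order $\P\{\xi>x\}$.

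Collecting the three contributions and using the $\pm p(x)$ perturbation to tilt the balance by an additive amount of order $p(x)e^{-Q_\pm(x)}$, one concludes that $\E U_-(x+\xi(x))-U_-(x)\le 0$ and $\E U_+(x+\xi(x))-U_+(x)\ge 0$ for all sufficiently large $x$. The remainder of the proof then repeats verbatim the optional-stopping argument at the end of the proof of Theorem~\ref{thm:risk}: setting $B=(-\infty,\widehat x]$ and $\tau_B=\inf\{n:R_n\in B\}$, the bounded sub- and super-martingales $U_\pm(R_{n\wedge\tau_B\wedge\tau_{(z,\infty)}})$ together with the transience of $\{R_n\}$ from Theorem~\ref{thm:transience} yield that $\P_x\{\tau_B<\infty\}$ is of order $U(x)$, hence of order $J(x)$; the two ruin-probability bounds then follow via the Markov-property argument used in \eqref{lower.hat.0}.

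The principal obstacle is the sharp analysis of the left-tail contribution $\E\{U_\pm(x+\xi(x))-U_\pm(x);\xi(x)<-x/2\}$. Both this term and the diffusive contribution from the middle region are of order $\P\{\xi>x\}$, so a crude bound on the left-tail will not suffice: one needs an asymptotic expression, and the functions $q$ and $p$ must be chosen with care so that the sign of the resulting sum is correct for both the super- and submartingale properties. The required uniform-in-$x$ estimate of the conditional tail $\P\{\xi(x)<-y\}$ is the crucial technical point, and rests on the heavy-tail asymptotics of $\xi$ together with the moment conditions on $\tau$.
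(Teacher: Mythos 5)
Your plan stays inside the Lyapunov--supermartingale framework of Section~\ref{sec:rate.1x}, but there is a structural obstruction there that the paper deliberately avoids, and I do not see how your construction can overcome it. You correctly identify that for $U_\pm(x)\sim cJ(x)\sim (c/\beta)x^{-\beta}L(x)$ both the middle-region (diffusive) contribution and the left-tail contribution to $\E U_\pm(x+\xi(x))-U_\pm(x)$ are of the same order $x^{-2-\beta}L(x)$: writing $b=\V\xi+v_c^2\V\tau$, the Taylor term $U_\pm'(x)m_1(x)+\tfrac12 U_\pm''(x)m_2(x)$ is $\sim c\,\tfrac{b(\beta-\rho)}{2}\,x^{-2-\beta}L(x)$ (negative since $\beta<\rho$), while the left-tail term $\E\{U_\pm(x+\xi(x))-U_\pm(x);\xi(x)<-x/2\}$ is $\asymp U_\pm(0)\,x^{-2-\beta}L(x)$ (positive). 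The crucial problem is that the mechanism by which Theorem~\ref{thm:risk} produces \emph{both} a supermartingale $U_-$ and a submartingale $U_+$ is the tilt $q_\pm=q\pm p$ with $p$ integrable, which contributes only $\pm p(x)e^{-Q_\pm(x)}=o(x^{-2-\beta}L(x))$ --- strictly subdominant to the two competing $x^{-2-\beta}L(x)$ terms. Once $Q$ is fixed (even up to the integrable perturbation), the sign of the dominant drift is fixed: depending on the numerical competition between $U(0)$ and $c\,b(\rho-\beta)/2$, \emph{both} $U_+$ and $U_-$ become supermartingales, or both become submartingales, or the expansion is inconclusive; in no case do you extract matching upper and lower bounds simultaneously. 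There is no free parameter to tune, since additively shifting $Q$ rescales $c$ and $U(0)$ by the same factor.

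This is precisely why the paper does not attempt a single Lyapunov argument for Theorem~\ref{thm:heavy-tails}. For the upper bound it \emph{removes} the heavy left tail from the martingale analysis by introducing the conditioned chain $\TR_n$ with jumps truncated below at $-x/2$ (Lemma~\ref{lemma:mengenAn}), so that the Lyapunov argument of Section~\ref{sec:rate.1x} applies cleanly to $\widetilde\psi(x)$ (Lemma~\ref{lem:psi-tilde}), and the left-tail effect re-enters as a separate additive term $\int(1-g(y))\widetilde H_x(dy)$ estimated via renewal-measure bounds (Lemma~\ref{lem:H-tilde}). For the lower bound the paper abandons Lyapunov functions altogether and argues directly: the chain stays in $[x/2,2x]$ for $\delta x^2$ steps with probability bounded away from zero (martingale maximal inequality plus the down-crossing estimate from \cite{DKW2013}), and at each such step a single catastrophic claim of size $\ge 2x$ causes ruin, producing the factor $x^2\P\{\xi>x\}$. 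Your proposal therefore has a genuine gap: the hope that ``$q$ and $p$ can be chosen with care'' is not realizable within the $U_\pm$ construction, because the degree of freedom (the integrable tilt) lives at the wrong order of magnitude.
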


Under the condition \eqref{eq:xi-heavy}, by Karamata's theorem,
$$
\int_x^\infty y\P\{\xi>y\}dy
\sim\frac{1}{\beta}x^2\P\{\xi>x\}\quad\mbox{as }x\to\infty.
$$
Therefore, the claim of Theorem~\ref{thm:heavy-tails}
can be reformulated in the following way:
$$
\widehat{C}_1 x^2 \P\{\xi>x\}
\le \psi(x)\le 
\widehat C_2 x^2 \P\{\xi>x\}.
$$
Notice that, for the classical ruin process with constant premium rate 
and with claim size of subexponential type,
$\psi(x)$ is asymptotically equivalent to the integral $\int_x^\infty \P\{\xi>y\}dy$
(see e.g. \cite[Section 5.11]{FKZ}). 
So, the main difference between our case and the classical one is that the 
probability of ruin is higher in our case owing to 
the additional weight $y$ in the integral, which is not surprising 
and reflects the fact that our system is close to a critical one, 
$v(y)\to v_c$ as $y\to\infty$.

Notice that the condition \eqref{cond:Rgr0} follows by \eqref{eq:xi-heavy}.

The proof of Theorem \ref{thm:heavy-tails} is split into two parts,
where we derive the upper and lower bounds.
For both, we need the following result on the left tail distribution
of the jumps of the chain $\{R_n\}$.

\begin{lemma}
\label{lem:g-tail}
If the distribution of $\xi$ is long-tailed, that is, if
$$
\lim_{x\to\infty}\frac{\P\{\xi>x+u\}}{\P\{\xi>x\}}=1
\quad\text{for any fixed }u,
$$
then, uniformly for all $x\ge 0$,
\begin{eqnarray*}
\P\{\xi(x)< -y\} &\sim& \P\{\xi>y\}\quad\text{as }y\to\infty.
\end{eqnarray*}
\end{lemma}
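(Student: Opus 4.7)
The plan is to leverage the distributional identity $\xi(x) =_{\rm st} V_x(\tau) - x - \xi$ together with the deterministic pathwise control $0 \le V_x(\tau) - x \le \overline v\,\tau$, where $\overline v = \sup_z v(z) < \infty$. The first inequality holds because $v \ge 0$ implies $V_x$ is non-decreasing; the second follows by integrating $V_x'(s) = v(V_x(s)) \le \overline v$. Since $\xi$ is independent of $\tau$ (and hence of the entire curve $V_x(\tau)$), conditioning on $\tau$ gives
\begin{eqnarray*}
\P\{\xi(x) < -y\} &=& \P\{\xi > y + (V_x(\tau)-x)\}\ =\ \E\,\overline F\bigl(y+(V_x(\tau)-x)\bigr),
\end{eqnarray*}
where $\overline F(u) := \P\{\xi > u\}$.

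Next, I would sandwich this expectation between two quantities not depending on $x$. By monotonicity of $\overline F$ and the bounds $0 \le V_x(\tau)-x \le \overline v\,\tau$,
\begin{eqnarray*}
\E\,\overline F(y+\overline v\,\tau) &\le& \P\{\xi(x) < -y\}\ \le\ \overline F(y).
\end{eqnarray*}
The upper bound immediately yields $\limsup_{y\to\infty}\sup_{x\ge 0}\P\{\xi(x)<-y\}/\overline F(y) \le 1$. For the lower bound, I would divide through by $\overline F(y)$ and invoke the long-tail hypothesis: for $\tau$-almost every realisation the ratio $\overline F(y+\overline v\,\tau)/\overline F(y) \to 1$ as $y\to\infty$, and this ratio is dominated by the constant $1$. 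Dominated convergence (justified by the finiteness of any moment of $\tau$ — in fact merely of the constant $1$) gives
\begin{eqnarray*}
\frac{\E\,\overline F(y+\overline v\,\tau)}{\overline F(y)} &\to& 1\quad\mbox{as }y\to\infty,
\end{eqnarray*}
whence $\liminf_{y\to\infty}\inf_{x\ge 0}\P\{\xi(x)<-y\}/\overline F(y) \ge 1$. Combining the two inequalities yields the claim with the required uniformity in $x \ge 0$.

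There is no serious obstacle: the key point is simply that $V_x(\tau)-x$ admits a uniform-in-$x$ stochastic upper bound $\overline v\,\tau$, which is the whole reason the long-tail property — defined via shifts by a fixed constant — transfers to shifts by the random variable $V_x(\tau)-x$ even though the latter's distribution depends on $x$. The only mild point requiring care is the passage under the expectation sign, but dominated convergence with majorant $1$ handles this directly, so no moment assumption on $\tau$ beyond the standing one is needed.
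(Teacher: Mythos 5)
Your proof is correct and follows essentially the same route as the paper: both use the identity $\xi(x)=_{\rm st}V_x(\tau)-x-\xi$, the trivial upper bound $\P\{\xi(x)<-y\}\le\P\{\xi>y\}$, and, for the lower bound, the uniform-in-$x$ stochastic domination $V_x(\tau)-x\le\overline v\tau$ combined with long-tailedness of $\xi$. The only cosmetic difference is in passing to the limit: you apply dominated convergence (with majorant $1$) directly to $\E\,\overline F(y+\overline v\tau)/\overline F(y)$, whereas the paper introduces an auxiliary threshold $u$, bounds $\P\{\xi(x)\le-y\}\ge\P\{\xi>y+u\}\P\{V_x(\tau)-x<u\}$, uses long-tailedness at fixed $u$, and then lets $u\to\infty$ via $\inf_x\P\{V_x(\tau)-x<u\}\to1$; the two implementations are equivalent, and yours is if anything slightly more streamlined.
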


\begin{proof}
Using the equality $\xi(x)=V_x(\tau)-x-\xi$, 
we get the following upper bound
\begin{eqnarray*}
\P\{\xi(x)< -y\} &=& \P\{\xi-(V_x(\tau)-x)>y\}\\
&\le& \P\{\xi>y\}.
\end{eqnarray*}
For a lower bound, let us notice that, for any fixed $u$,
\begin{eqnarray*}
\P\{\xi(x)\le -y\} &\ge& \P\{\xi>y+u\} \P\{V_x(\tau)-x<u\}\\
&\sim& \P\{\xi>y\} \P\{V_x(\tau)-x<u\}\quad\mbox{as }y\to\infty,
\end{eqnarray*}
due to the long-tailedness of the distribution of $\xi$. 
Also, by the stochastic boundedness of the family
of random variables $\{V_x(\tau)-x,\ x\ge 0\}$,
\begin{eqnarray*}
\inf_x\P\{V_x(\tau)-x<u\} &\to& 1\quad\mbox{as }u\to\infty,
\end{eqnarray*}
which implies the following lower bound, uniformly for all $x\ge 0$,
\begin{eqnarray*}
\P\{\xi(x)\le -y\} &\ge& \P\{\xi>y\} (1+o(1))\quad\mbox{as }y\to\infty,
\end{eqnarray*}
hence the desired result.
\end{proof}

\subsection{Proof of the upper bound}
As in the previous sections, we analyse the behaviour of the chain $R_n=R(T_n)$, $n\ge0$. 
In order to understand the impact of large claim sizes on ruin probabilities
from the point of view of an upper bound,
we introduce an auxiliary chain with jumps truncated below. 
For every $x\ge0$ we define jump $\widetilde{\xi}(x)$ as follows:
$$
\P\{\widetilde{\xi}(x)\in B\}:=\P\{\xi(x)\in B\mid \xi(x)\ge -x/2\},
\quad B\in\mathcal{B}(\R).
$$
Let $\{\widetilde{R}_n\}$ be a Markov chain with jumps $\widetilde{\xi}(x)$. 
The connection between $\{\widetilde{R}_n\}$ and $\{R_n\}$ is described in the next lemma.

\begin{lemma}\label{lemma:mengenAn}
Set $A_n := \{ \xi(R_k) \ge  -R_k/2 \text{ for all } k < n \}$. 
Then, for all Borel sets $B_1, B_2,\ldots, B_n$ we have
\begin{eqnarray*}
\P_x \{R_1 \in B_1,\ldots, R_n \in B_n ; A_n\} &=& 
\E_x \biggl\{ \prod_{k=0}^{n-1} g(\widetilde R_k);\ \TR_1 \in B_1,\ldots, \TR_n \in B_n \biggr\},
\end{eqnarray*}
where 
\begin{eqnarray*}
    g(x) := \mathbb{P} \{\xi(x)\ge -x/2\} \in (0,1).
\end{eqnarray*}
\end{lemma}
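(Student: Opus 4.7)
The plan is to unfold both sides as iterated integrals against the respective Markov transition kernels and reduce everything to a single pointwise identity. Let $P(y, \cdot)$ and $\widetilde{P}(y, \cdot)$ denote the one-step transition kernels of $\{R_n\}$ and $\{\widetilde R_n\}$, respectively. Directly from the definition of $\widetilde\xi(y)$ as $\xi(y)$ conditioned on $\{\xi(y)\ge -y/2\}$, the key pointwise identity will be
\begin{equation*}
P(y, dz)\,\I\{z \ge y/2\} \;=\; g(y)\,\widetilde P(y, dz),
\end{equation*}
and $\widetilde P(y, \cdot)$ is itself supported on $\{z \ge y/2\}$, so no indicator is needed on the right.

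Next, I would invoke the Markov property of $\{R_n\}$ together with the decomposition $A_n = \bigcap_{k=0}^{n-1}\{R_{k+1} \ge R_k/2\}$ to rewrite the left-hand side as the iterated integral
\begin{equation*}
\int_{B_1\times\cdots\times B_n} \prod_{k=1}^n P(y_{k-1}, dy_k)\,\I\{y_k \ge y_{k-1}/2\}, \qquad y_0 := x.
\end{equation*}
Substituting the key identity into each of the $n$ factors converts this into
\begin{equation*}
\int_{B_1\times\cdots\times B_n} \Bigl(\prod_{k=0}^{n-1} g(y_k)\Bigr)\prod_{k=1}^n \widetilde P(y_{k-1}, dy_k),
\end{equation*}
which, by the Markov property of $\{\widetilde R_n\}$ with $\widetilde R_0 = x$, is precisely $\E_x\bigl\{\prod_{k=0}^{n-1} g(\widetilde R_k);\ \widetilde R_1 \in B_1,\ldots,\widetilde R_n \in B_n\bigr\}$, namely the right-hand side of the lemma.

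The argument is essentially mechanical, so I do not expect any serious obstacle. The only minor points worth checking are that $g(y) \in (0,1)$ so that the conditional law defining $\widetilde\xi(y)$ is meaningful: positivity will follow from the non-degeneracy condition \eqref{cond:Rgr01}, while the strict upper bound $g(y) < 1$ is a consequence of the unboundedness of the claim distribution together with the inequality $\xi(y) \le \overline v\,\tau - \xi$ already noted in the paper. If preferred, the same identity can equally well be proved by induction on $n$, using the Markov property at step $n$ to reduce to the case $n-1$.
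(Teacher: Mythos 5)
Your proposal is correct and, at bottom, the same argument as the paper's: the paper proves the identity by induction on $n$, applying the Markov property at each step, while you unfold both sides into iterated integrals against the transition kernels and match factor by factor via the pointwise identity $P(y,dz)\,\I\{z\ge y/2\}=g(y)\,\widetilde P(y,dz)$; these are two presentations of the same computation, as you yourself note by observing that the induction variant works equally well. The only cosmetic difference is that your version makes the kernel identity explicit, whereas the paper folds it into the one-step calculation $\P\{x+\xi(x)\in B,\ \xi(x)\ge -x/2\}=g(x)\,\P\{x+\widetilde\xi(x)\in B\}$.
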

\begin{proof}
We use the induction in $n$. If $n=1$, then
\begin{eqnarray*}
\P_x \{R_1 \in B_1 ; A_1\} 
&=& \P \{x + \xi(x) \in B_1 , \xi(x) > -x/2\} \\
&=& g(x) \P \{x+\widetilde{\xi}(x) \in B_1\}\\
&=& \E_x \{g(\TR_0);\TR_1 \in B_1\}.
\end{eqnarray*}
For the induction step $n-1\to n$ it suffices to apply the Markov property:
\begin{eqnarray*}
\lefteqn{\P_x\{R_1 \in B_1, ..., R_n \in B_n ; A_n\}}\\
&=& \int_{B_{n-1}}\P_x\{R_1 \in B_1,\ldots, R_{n-1}\in dy; A_{n-1}\}
\P\{y + \xi(y) \in B_n, \xi(y)\ge  -y/2\} \\
&=& \int_{B_{n-1}} \E_x 
\biggl[\prod_{k=0}^{n-2} g(\widetilde R_k);\ \TR_1 \in B_1, ..., \TR_{n-1} \in dy \biggr]
g(y) \P\{y + \widetilde{\xi}(y)\in B_n\}\\
&=& \E_x \biggl\{ \prod_{k=0}^{n-1} g(\widetilde R_k);\ \TR_1 \in B_1, ..., \TR_n \in B_n \biggr\},
\end{eqnarray*}
which completes the proof.
\end{proof}

Let $\widetilde{\psi}(x)$ denote the ruin probability for the chain $\{\TR_n\}$, that is,
$$
\widetilde{\psi}(x)=\P_x\{\TR_n<0\text{ for some }n\ge1\}.
$$
Let $\widetilde{H}_x$ be the renewal measure of $\{\TR_n\}$
with starting point $x$:
$$
\widetilde{H}_x(B)=
\sum_{n=0}^\infty\P_x\{\TR_n\in B\},
\quad B\in\mathcal{B}(\R).
$$

\begin{lemma}
The following inequality holds true
\begin{eqnarray}\label{eq:decomp}
\psi(x) &\le& \widetilde{\psi}(x)+\int_{0}^{\infty} (1-g(y)) \widetilde{H}_x (dy).
\end{eqnarray}
\end{lemma}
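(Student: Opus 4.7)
The plan is to decompose the ruin event for $\{R_n\}$ according to whether the first ``big jump'' (one with $\xi(R_k) < -R_k/2$) occurs before the ruin time, and then apply Lemma~\ref{lemma:mengenAn} to reduce both pieces to quantities involving the truncated chain $\{\TR_n\}$.

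Concretely, I would introduce $\rho := \inf\{n \ge 1 : R_n < 0\}$ and $\sigma := \inf\{k \ge 0 : \xi(R_k) < -R_k/2\}$, noting that $\{\sigma \ge n\} = A_n$. Splitting gives
$$
\{\rho < \infty\} \ =\ \{\rho < \infty,\ \sigma \ge \rho\}\ \cup\ \{\rho < \infty,\ \sigma < \rho\},
$$
and I would bound the two pieces separately. For the first, since $\{\rho = n\} \cap \{\sigma \ge \rho\} = \{\rho = n\} \cap A_n$, applying Lemma~\ref{lemma:mengenAn} with $B_1 = \cdots = B_{n-1} = [0,\infty)$ and $B_n = (-\infty,0)$, and then bounding $\prod_{k<n} g(\TR_k) \le 1$, yields $\P_x\{\rho = n, A_n\} \le \P_x\{\widetilde{\rho} = n\}$ where $\widetilde{\rho} := \inf\{n \ge 1 : \TR_n < 0\}$. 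Summing over $n \ge 1$ produces the $\widetilde{\psi}(x)$ contribution.

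For the second piece I would bound it trivially by $\P_x\{\sigma < \infty\}$. The key computation is that by the Markov property at time $k$, given $R_k = y$ the conditional probability that $\xi(R_k) < -R_k/2$ equals $1 - g(y)$ irrespective of the past (and $A_k$ is measurable with respect to $R_0,\ldots,R_k$), so that
$$
\P_x\{\sigma = k,\ R_k \in dy\}\ =\ (1 - g(y))\, \P_x\{A_k,\ R_k \in dy\}.
$$
A second application of Lemma~\ref{lemma:mengenAn} (with $B_k = dy$ and $B_j = \R$ for $j < k$), combined again with $\prod_{j<k} g(\TR_j) \le 1$, gives $\P_x\{A_k, R_k \in dy\} \le \P_x\{\TR_k \in dy\}$. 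Summing over $k \ge 0$ and applying Tonelli produces $\int_0^\infty (1-g(y))\, \widetilde{H}_x(dy)$, as required.

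The main obstacle is simply the clean isolation of the factor $1 - g(y)$ from the joint event $\{A_k,\ \xi(R_k) < -R_k/2,\ R_k \in dy\}$ via the Markov property; once this is in place, Lemma~\ref{lemma:mengenAn} performs all the structural work of transferring quantities from $\{R_n\}$ to $\{\TR_n\}$, avoiding any explicit coupling argument.
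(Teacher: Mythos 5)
Your proposal is correct and follows essentially the same route as the paper: split the ruin event according to whether a large downward jump occurs before ruin, bound the no-big-jump piece via Lemma~\ref{lemma:mengenAn} applied to $B_1=\cdots=B_{n-1}=[0,\infty)$, $B_n=(-\infty,0)$ with $\prod g\le1$, and bound the big-jump piece by summing $\P_x\{\sigma=k,R_k\in dy\}=(1-g(y))\P_x\{A_k,R_k\in dy\}\le(1-g(y))\P_x\{\TR_k\in dy\}$ over $k$. The paper keeps the constraint $\tau_0>n$ slightly longer before discarding it, whereas you drop it at the outset via $\{\sigma<\rho\}\subseteq\{\sigma<\infty\}$; this is a cosmetic difference, not a structural one (though you may want to avoid reusing the symbol $\rho$, which the paper already assigns to $\frac{2\theta\E\tau}{\V\xi+v_c^2\V\tau}-1$).
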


\begin{proof}
Let
$$
\tau_0:=\inf\{n\ge1: R_n< 0\}
$$
and 
$$
A_{\tau_0} := \{ \xi(R_k) \ge -R_k/2\text{ for all } k < \tau_0\}.
$$
Noting that 
$$
\{ \tau_0 < \infty \} \subseteq \left( \{ \tau_0 < \infty \} \cap A_{\tau_0} \right) \cup A_{\tau_0}^c,
$$ 
we get
\begin{align}\label{form:tau0up}
\P_x \{\tau_0 < \infty\} \le \P_x \{\tau_0 < \infty,A_{\tau_0}\} + \P_x \{A_{\tau_0}^c\}.
\end{align}
Using now Lemma \ref{lemma:mengenAn} with $B_1 = ... = B_{n-1} = [0,\infty)$ 
and $B_n = (-\infty, 0)$ we obtain
\begin{align*}
\P_x \{\tau_0 = n, A_n\} 
= \E_x \biggl\{\prod_{k=0}^{n-1} g(\widetilde R_k);\ \widetilde{\tau}_0 = n\biggr\} 
\le \P_x \{\widetilde{\tau}_0 = n\},
    \quad n\ge1,
\end{align*}
where
$$
\widetilde\tau_0:=\inf\{n\ge1:\ \TR_n< 0\}.
$$
This implies that 
\begin{align}
\label{A_tau}
\P_x \{\tau_0 < \infty, A_{\tau_0}\} 
\le \P_x \{\widetilde{\tau}_0 < \infty\}
=\widetilde{\psi}(x).
\end{align}
To bound the second probability term on the right hand side of (\ref{form:tau0up}),
we firstly apply the total probability law twice
\begin{eqnarray*}
\P_x \{A_{\tau_0}^c\} &=& \P_x\{\xi(R_k) <-R_k/2\text{ for some } k < \tau_0\}\\
&=& \sum_{n=0}^\infty \P_x \{A_n, \tau_0 > n, \xi(R_n)< -R_n/2\} \\
&=& \sum_{n=0}^{\infty} \int_0^\infty \P_x \{R_n \in dy, A_n, \tau_0 > n\}\P\{\xi(y)< -y/2\},
\end{eqnarray*}
and then we apply again Lemma \ref{lemma:mengenAn}
to the probability on the right hand side:
\begin{eqnarray*}
\P_x \{A_{\tau_0}^c\} &\le& 
\sum_{n=0}^{\infty} \int_0^\infty \P_x \{\widetilde R_n \in dy\} (1-g(y))\\
&=& \int_0^\infty (1-g(y)) \widetilde{H}_x (dy).
\end{eqnarray*}
Plugging now this bound and \eqref{A_tau} into (\ref{form:tau0up}), we get the desired 
upper bound.
\end{proof}
In order to get an upper bound for $\psi(x)$ we need upper bounds
for both terms on the right hand side of \eqref{eq:decomp}. 
It turns out that $\widetilde\psi(x)$ can be estimated by the method 
used in the proof of Theorem~\ref{thm:risk}.

\begin{lemma}\label{lem:psi-tilde}
Assume that $\E\tau^2\log(1+\tau)$ and $\E\xi^2\log(1+\xi)$ are finite.
Then there exists a constant $C$ such that
\begin{equation}\label{eq:down-trunc}
\P_x\{\TR_n\le y\text{ for some }n\ge1\}
\le C\frac{1+y^\rho}{x^\rho}\quad\mbox{for all }0<y<x.
\end{equation}
In particular,
$$
\widetilde{\psi}(x)\le\frac{C}{x^\rho}\quad\mbox{for all }x>0.
$$
\end{lemma}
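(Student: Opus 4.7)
The natural approach is to rerun the supermartingale argument from the upper bound in Theorem~\ref{thm:risk}, but applied to the truncated chain $\{\widetilde R_n\}$. Truncation at $-x/2$ deterministically removes the dangerous left tail — precisely the tail whose control in Theorem~\ref{thm:risk} required the strong moment assumption $\E\xi^{\rho+2}<\infty$. Since $\widetilde\xi(x)\ge -x/2$ a.s., the analogue of the left-tail bound \eqref{cond.xi.le.return} is trivial, and the upper-tail estimates \eqref{risk.8.tail}, \eqref{cond.3.moment.return} carry over because the upper tail of $\widetilde\xi(x)$ coincides with that of $\xi(x)$ and is governed by $\tau$ rather than $\xi$.

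The first substantive step is to show that the truncated moments $\tilde m_k(x):=\E\widetilde\xi^k(x)$ satisfy the same asymptotics as $m_1, m_2$ in Lemma~\ref{l:risk}. Writing
$$
\tilde m_k(x)\,g(x)=m_k(x)-\E\{\xi^k(x);\,\xi(x)<-x/2\},
$$
and using $\xi(x)\ge-\xi$ together with $\E\xi^2\log(1+\xi)<\infty$ (which, via Lemma~\ref{l:maj.p.e.log}, makes $\E\{\xi^2;\xi>x\}=o(xp_1(x))$ for some decreasing integrable $p_1$), the truncation corrections for $k=1,2$ are absorbed into the error terms already present in \eqref{risk.7}--\eqref{risk.8}. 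Since $g(x)\to 1$ polynomially fast,
$$
\tilde m_1(x)=\frac{\theta\E\tau}{x}+O(p(x)+1/x^2),\qquad \tilde m_2(x)=\V\xi+v_c^2\V\tau+O(1/x),
$$
so that $2\tilde m_1(x)/\tilde m_2(x)=q(x)+o(p(x))$. Repeating the Taylor expansion from Lemma~\ref{L.Lyapunov.return} verbatim with $\widetilde\xi(x)$ in place of $\xi(x)$ then yields
$$
\E U_-(x+\widetilde\xi(x))-U_-(x)=-p(x)(1+o(1))e^{-Q_-(x)}\le 0
$$
for all $x$ exceeding some threshold $\widehat x$. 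Consequently $\{U_-(\widetilde R_{n\wedge\tau_B})\}$ is a bounded nonnegative supermartingale for any $B=(-\infty,z]$ with $z\ge\widehat x$.

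With the supermartingale in hand, optional stopping (exactly as in \eqref{tau.B.infty.U-}) gives, for every $y\ge\widehat x$ and every $x>y$,
$$
\P_x\{\widetilde R_n\le y\text{ for some }n\ge 1\}\ \le\ \frac{U_-(x)}{U_-(y)}.
$$
The equivalences \eqref{equiv.for.U.return}, \eqref{eq:QandU} give $U_-(z)\sim C_3 z^{-\rho}$ as $z\to\infty$, and $U_-$ is continuous and bounded away from zero on any compact set. This converts the above into the desired bound $C(1+y^\rho)/x^\rho$ for $y\ge\widehat x$ and $x$ large. For $y<\widehat x$ the monotone inclusion $\{\widetilde R_n\le y\}\subseteq\{\widetilde R_n\le \widehat x\}$ reduces everything to the previous case up to adjusting the constant; specialising $y\downarrow 0$ delivers $\widetilde\psi(x)\le C/x^\rho$.

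The main obstacle is the first substantive step: justifying that the truncated moment $\tilde m_2(x)$ retains the precision required by Lemma~\ref{L.Lyapunov.return}, because $\E\{\xi^2(x);\xi(x)<-x/2\}$ is only controlled by $\E\{\xi^2;\xi>x/2\}$, and a bare Markov bound would lose too much. This is exactly where the assumption $\E\xi^2\log(1+\xi)<\infty$ is sharp, entering via Lemma~\ref{l:maj.p.e.log} to supply the integrable majorant $p_1$ under which all the truncation corrections become subdominant.
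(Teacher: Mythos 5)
Your plan is correct and uses the same core idea as the paper — the truncation at $-x/2$ removes precisely the left-tail piece that in the proof of Lemma~\ref{L.Lyapunov.return} required $\E\xi^{\rho+2}<\infty$ via \eqref{cond.xi.le.return} — but the execution differs. You propose to re-derive the moment asymptotics for $\widetilde\xi(x)$ and then rerun the Taylor expansion of Lemma~\ref{L.Lyapunov.return} with $\widetilde\xi(x)$ in place of $\xi(x)$. The paper instead avoids any recomputation by using the identity
$$
\E U_-(x+\widetilde\xi(x))-U_-(x)
=\frac{1}{g(x)}\E\bigl\{U_-(x+\xi(x))-U_-(x);\ \xi(x)\ge -x/2\bigr\},
$$
splitting the right-hand side into the events $\{|\xi(x)|\le x/2\}$ and $\{\xi(x)>x/2\}$, and reusing the intermediate estimates \eqref{L.harm2.2a.return} and \eqref{L.harm2.2.2.return} verbatim; the troublesome $\{\xi(x)<-x/2\}$ term of the original proof simply does not appear. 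This is cleaner because it sidesteps the $g(x)$-corrections and the need to restate moment asymptotics for the conditional jump. Your route works too, though one line is stated too strongly: the truncation correction to $\tilde m_2(x)$ is $g(x)^{-1}\E\{\xi^2(x);\xi(x)<-x/2\}=o(xp_1(x))$, which need not be $O(1/x)$; writing $\tilde m_2(x)=\V\xi+v_c^2\V\tau+O(1/x)$ is therefore slightly imprecise, but since what enters \eqref{L.harm2.2.1.return} is $2\tilde m_1/\tilde m_2 - q$, and the $\tilde m_2$-error contributes only $O(q(x)\cdot\epsilon_2)=o(p_1(x))$ there, the conclusion $2\tilde m_1(x)/\tilde m_2(x)=q(x)+o(p(x))$ and hence the supermartingale property for $U_-(\widetilde R_{n\wedge\tau_B})$ is unaffected. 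Your conversion of the supermartingale into the stated bound, taking $B=(-\infty,y]$ for $y\ge\widehat x$ and using monotone inclusion for $y<\widehat x$, matches what the paper leaves implicit in its final sentence.
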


\begin{proof}
Let $U_{-}(x)$ be the function defined in the proof of Theorem~\ref{thm:risk}. 
By the definition of $\widetilde{\xi}(x)$,
\begin{eqnarray*}
 \E U_-(x+\widetilde{\xi}(x))-U_-(x)
 &=&\frac{1}{g(x)}\E\{U_-(x+\xi(x))-U_-(x);\ \xi(x)\ge -x/2\}\\
 &=&\frac{1}{g(x)}\E\{U_-(x+\xi(x))-U_-(x);\ |\xi(x)|\le x/2\}\\
 && +\frac{1}{g(x)}\E\{U_-(x+\xi(x))-U_-(x);\ \xi(x)> x/2\}.\\
\end{eqnarray*}
Since the estimates \eqref{cond.3.moment.return} and  \eqref{risk.8.tail} are valid under 
the conditions of the present lemma, we may apply \eqref{L.harm2.2a.return} 
and \eqref{L.harm2.2.2.return} to get 
$$
\E U_-(x+\widetilde{\xi}(x))-U_-(x)
=-\frac{1+o(1)}{g(x)}p(x)e^{-Q_-(x)}.
$$
Therefore, there exists $\widehat{x}$ such that 
$U_-(\TR_{n\wedge\tau_B})$ is a bounded supermartingale,
where $B=(-\infty,\widehat{x}]$. 
Then, applying the optional stopping theorem, we get the desired upper bound.
\end{proof}

We now turn to the second term in \eqref{eq:decomp}. 
Firstly we state the following upper bounds for the renewal function.

\begin{lemma}\label{lem:H-tilde}
The following bounds hold true:
\begin{eqnarray}\label{eq:H1}
\widetilde{H}_x(0,y] &\le& C(1+y^2)\quad\mbox{for all }x,y>0
\end{eqnarray}
and 
\begin{eqnarray}\label{eq:H2}
\widetilde{H}_x(0,y] &\le& C\frac{1+y^{2+\rho}}{x^\rho}
\quad\mbox{for all }0<y<x.
\end{eqnarray}
\end{lemma}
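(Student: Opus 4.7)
The plan is to construct a single non-negative Lyapunov function $h = h_y$, depending on the level $y$, that dominates the Green's function of $\{\widetilde R_n\}$ on $(0, y]$ and yields both bounds at once. With such an $h$ satisfying
$$
\E[h(\widetilde R_1) \mid \widetilde R_0 = x] - h(x) \le -\I_{(0,y]}(x)
$$
for all sufficiently large $x$, an optional-stopping argument identical in spirit to those in the proofs of Theorem~\ref{thm:risk} and Lemma~\ref{lem:psi-tilde} will give $\widetilde H_x(0, y] \le h(x) + C$ uniformly in $x$ and $y$.

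The shape of $h$ is dictated by the Lamperti-type moment estimates of Lemma~\ref{l:risk}, which pass over to the truncated jump $\widetilde\xi(x)$ because $g(x)=\P\{\xi(x)\ge -x/2\}\to 1$ and the left tail of $\widetilde\xi(x)$ is trivially bounded by $x/2$. Writing $\mu := \theta\E\tau$ and $b := \V\xi + v_c^2\V\tau$, the key observation is that $2x\widetilde m_1(x) + \widetilde m_2(x) \to 2\mu + b$ as $x\to\infty$. I would set $A := 1/(2\mu + b)$ and $B := 2A/\rho$, and take
$$
h(x) := \begin{cases}
A\bigl(y^2(1 + 2/\rho) - x^2\bigr), & 0 \le x \le y, \\
B\,y^{2+\rho}/x^\rho, & x \ge y,
\end{cases}
$$
which is non-negative, continuous, and $C^1$ at $x=y$. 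Then $h(x) \le (A+B)y^2$ on $[0,y]$ and $h(x) = By^{2+\rho}/x^\rho$ on $[y,\infty)$, so the bounds \eqref{eq:H1} and \eqref{eq:H2} follow at once from $\widetilde H_x(0,y]\le h(x)+C$.

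To verify the drift inequality I would Taylor-expand $h(x+\widetilde\xi(x))$ around $x$, splitting the analysis according to $x < y/2$, $x > 2y$ and the transition band $y/2 \le x \le 2y$. On $x < y/2$, where $h$ is quadratic, the second-order expansion gives $-2Ax\widetilde m_1(x) - A\widetilde m_2(x) + o(1) = -1 + o(1)$, matching $-\I_{(0,y]}(x)$ by the choice of $A$. On $x > 2y$, where $h$ is proportional to $x^{-\rho}$, the very computation that fixes the value of $\rho$ in Theorem~\ref{thm:risk} shows that $x^{-\rho}$ is almost harmonic for the chain (cf.\ Lemma~\ref{L.Lyapunov.return}); the error is of order $O(y^{2+\rho}p(x)/x^\rho)$, negligible since $\I_{(0,y]}(x)=0$ there. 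Tail pieces $\E\{\cdot;|\widetilde\xi(x)|>x/2\}$ on both sides are controlled by \eqref{cond.xi.le.return}--\eqref{risk.8.tail}.

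The main obstacle is the intermediate regime $y/2 \le x \le 2y$, where the two branches of $h$ meet and $\widetilde\xi(x)$ can cross the joining point $y$. Here I would rely on the $C^1$ matching of $h$ at $y$, the uniform bounds $h = O(y^2)$ and $h' = O(y)$ in this band, and the moment estimates for $\widetilde\xi(x)$ to show that the mismatch of second and higher derivatives contributes only $o(1)$ uniformly in $y$. Once this is in hand, the optional-stopping argument of Section~\ref{sec:rate.1x} applied to the supermartingale $h(\widetilde R_{n\wedge\tau_B}) + \sum_{k<n\wedge\tau_B}\I_{(0,y]}(\widetilde R_k)$, with $B$ a suitable bounded set on which $h$ is bounded and which handles the low end via \eqref{cond:Rgr0}, delivers both \eqref{eq:H1} and \eqref{eq:H2}.
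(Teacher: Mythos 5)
Your plan — a single $y$-dependent Lyapunov function bounding the Green's function on $(0,y]$ by optional stopping — is a genuinely different and more self-contained route than the paper's, which simply cites a black-box bound (Lemma~4 in \cite{DKW2013}) for \eqref{eq:H1} and then gets \eqref{eq:H2} by the one-line bootstrap $\widetilde H_x(0,y]\le\P_x\{\widetilde R_n\le y\text{ for some }n\}\cdot\sup_{u\le y}\widetilde H_u(0,y]$ combined with \eqref{eq:down-trunc}. However, your construction has a gap that is not merely a technicality. On $x>y$ you take $h(x)=By^{2+\rho}/x^\rho$, a pure power, and need $\E h(\widetilde R_1)-h(x)\le 0$ there. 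But the drift of a pure power $x^{-\rho}$ under this chain has no sign control: writing out the Taylor expansion as in Lemma~\ref{L.Lyapunov.return}, the terms $-\rho\mu\,x^{-\rho-2}$ and $\tfrac12\rho(\rho+1)b\,x^{-\rho-2}$ cancel \emph{exactly} because $\rho+1=2\mu/b$, and what remains is of order $p(x)x^{-\rho-1}$ with indeterminate sign. This is precisely why the paper replaces $x^{-\rho}$ by the perturbed function $U_-(x)$ built from $q_-(x)=q(x)-p(x)$: the $-p(x)$ correction is what manufactures a strictly negative drift. To repair your $h$ you would need to splice the quadratic branch onto a scaled $U_-$ and redo the $C^1$ matching and the second-order balancing against $U_-'$, $U_-''$, which is essentially the missing content of the proof.

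There is a second, softer issue at the low end. The estimates $\widetilde m_1(x)\sim\mu/x$ and $\widetilde m_2(x)\to b$ are asymptotic, so the drift inequality $\le-\I_{(0,y]}(x)$ cannot be verified on a fixed interval $(0,x_0]$ where the indicator is still $1$. If you instead stop at $B=(0,x_0]$, you control only $\E_x\sum_{k<\tau_B}\I_{(0,y]}(\widetilde R_k)$, and you must separately bound the Green's mass accumulated after $\tau_B$ — which requires a uniform-in-starting-point bound, i.e.\ exactly \eqref{eq:H1}. Relatedly, any additive constant $C$ in a bound of the form $\widetilde H_x(0,y]\le h(x)+C$ does not decay like $x^{-\rho}$, which spoils \eqref{eq:H2} for fixed $y$ and $x\to\infty$; to get that decay you need to pass through the hitting-probability estimate \eqref{eq:down-trunc}, which is the paper's bootstrap after all. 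Appealing to \eqref{cond:Rgr0} does not fix this, since that condition concerns the untruncated chain and says nothing about the Green's measure of $\{\widetilde R_n\}$ started from small $x$.
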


\begin{proof}
Firstly note that
$$
\P\{\widetilde{\xi}(u)<-u/2\}=0.
$$
Next, by using Lemma~\ref{l:risk}, we conclude that 
$$
\E\widetilde{\xi}(u)\sim \frac{\theta\E\tau}{u}
$$
and 
$$
\E\widetilde{\xi}^2(u)\to \V\xi+v_c^2\V\tau\quad\mbox{as }u\to\infty.
$$
Using these estimates one can easily see that all conditions of Lemma 4 in 
\cite{DKW2013} are met. This implies \eqref{eq:H1}.
To prove \eqref{eq:H2} it suffices to notice that 
$$
\widetilde{H}_x(0,y]
\le \P_x\{\TR_n\le y\text{ for some }n\ge1\}
\sup_{u\le y}\widetilde{H}_u(0,y]
$$
and to apply \eqref{eq:down-trunc} and \eqref{eq:H1}.
\end{proof}

Now we are ready to bound the second term in \eqref{eq:decomp}. 
Since $\xi(x)\ge_{st} -\xi$,
\begin{eqnarray}\label{eq:1-g.xi}
\int_0^\infty (1-g(y))\widetilde{H}_x(dy) &\le&
\int_0^\infty \P\{\xi>y/2\}\widetilde{H}_x(dy).
\end{eqnarray}
Integration by parts implies that
\begin{eqnarray*}
\int_0^\infty \P\{\xi>y/2\}\widetilde{H}_x(dy) 
&=& \P\{\xi>y/2\}\widetilde{H}_x(0,y]\Big|_0^\infty
-\int_0^\infty \widetilde{H}_x(0,y]d\P\{\xi>y/2\}\\
&=& \int_0^\infty \widetilde{H}_x(0,y]\P\{2\xi\in dy\},
\end{eqnarray*}
because $\P\{\xi>y\}=o(1/y^2)$ as $y\to\infty$ due to $\E\xi^2<\infty$,
and $\widetilde{H}_x(0,y]=O(y^2)$ due to \eqref{eq:H1}.
Next, by Lemma \ref{lem:H-tilde},
\begin{eqnarray*}
\int_0^\infty \widetilde{H}_x(0,y]\P\{2\xi\in dy\} &\le&
C\int_0^x \frac{1+y^{2+\rho}}{x^\rho}\P\{2\xi\in dy\}
+C\int_x^\infty (1+y^2)\P\{2\xi\in dy\}\\
&\le& C_1 (1/x^\rho+x^2\P\{\xi\ge x/2\}),
\end{eqnarray*}
owing to the regular variation of the distribution of $\xi$
and Karamata's theorem.
Since $\beta<\rho$, we conclude that
\begin{eqnarray*}
\int_0^\infty \widetilde{H}_x(0,y]\P\{2\xi\in dy\} &\le&
C_2 x^2\P\{\xi>x\}.
\end{eqnarray*}
Together with \eqref{eq:1-g.xi} it yields that 
\begin{eqnarray*}
\int_0^\infty (1-g(y))\widetilde{H}_x(dy)
&\le& C_2 x^2\P\{\xi>x\}.
\end{eqnarray*}
This estimate and Lemma~\ref{lem:psi-tilde} imply the desired upper bound.

\subsection{Proof of the lower bound}

To star with, we notice that, for all $x>0$ and $N\ge 1$,
\begin{eqnarray*}
\psi(x) &=& \sum_{n=0}^\infty\int_0^\infty
\P_x\{R_n\in dy,\ R_k\ge 0\mbox{ for all }k\le n\} \P\{\xi(y)<-y\}\\
&\ge& \inf_{y\in[x/2,2x]}\P\{\xi(y)< -y\}
\sum_{n=0}^N \P_x\{R_k\in [x/2,2x]\mbox{ for all }k\le n\}\\
&\ge& \inf_{y\in[x/2,2x]}\P\{\xi(y)< -y\}
N\P_x\{R_k\in [x/2,2x]\mbox{ for all }k\le N\}.
\end{eqnarray*}
Due to Lemma~\ref{lem:g-tail}, 
\begin{eqnarray*}
\inf_{y\in[x/2,2x]}\P\{\xi(y)< -y\} &\ge& \inf_{y\in[x/2,2x]}\P\{\xi(y)< -2x\}\\
&\sim& \P\{\xi>2x\}\quad\mbox{as }x\to\infty.
\end{eqnarray*}
Consequently, putting $N=\delta x^2$, we get the following lower bound
\begin{eqnarray*}
\psi(x)&\ge& c\P\{\xi>x\} \delta x^2\P_x\{R_k\in [x/2,2x]\mbox{ for all }k\le \delta x^2\},
\end{eqnarray*}
for every $\delta>0$. Thus, it only remains to show that we can choose a $\delta>0$ 
so small that the probability on the right hand side is bounded away from zero.

We start by stating the following decomposition
\begin{eqnarray}\label{eq:lb1}
\nonumber
\lefteqn{\P_x\{R_k\not\in [x/2,2x]\mbox{ for some }k\le \delta x^2\}}\\
&\le&\P_x\{R_k\le x/2\text{ for some }k\ge1\}
+\P_x\Bigl\{\max_{k\le \delta x^2}R_k>2x,R_k\ge x/2\text{ for all }n\ge1\Bigr\}.\nonumber\\[-1mm]
\end{eqnarray}
It follows from Lemma~\ref{l:risk} that for every 
$\varepsilon<\rho$ there exists an $x_0$ such that 
$$
\frac{2xm_1(x)}{m_2(x)}\ge 1+\varepsilon
\quad \text{for all }x\ge x_0.
$$
Noting that 
$$
\P\{\xi(x)\le-\gamma x\} \le\P\{\xi>\gamma x\}=o(1/x^{2+\beta_0})
$$
for every $\beta_0<\beta$, we infer that all the conditions of Lemma~1 
in \cite{DKW2013} hold true and, consequently, there exists an $x_0$ such that
$$
\P_x\{R_n\le z\text{ for some }n\ge1\} \le (z/x)^{\beta_0}
\quad\text{for all }x>z>x_0.
$$
In particular,
\begin{equation}\label{eq:lb2}
\P_x\{R_k\le x/2\text{ for some }k\ge1\}\le 1/2^{\beta_0}
\quad\text{for all }x>2x_0.
\end{equation}
To bound the second probability on the right hand side of 
\eqref{eq:lb1} we introduce a martingale
$$
M_k:=R_k-R_0-\sum_{j=0}^{k-1}m_1(R_j),\quad k\ge0.
$$
Due to Lemma~\ref{l:risk}, we may assume that $x_0$ is so large that 
$ym_1(y)\le 2\theta\E\tau$ for all $y\ge x_0$. This implies that, for $R_0=x$,
\begin{eqnarray*}
\max_{k\le \delta x^2}R_k &\le& x+\max_{k\le \delta x^2}M_k+4\theta\delta x\E\tau 
\end{eqnarray*}
on the event $\{R_k\ge x/2\text{ for all }k\ge1\}$.
Consequently,
\begin{eqnarray*}
\P_x\Bigl\{\max_{k\le \delta x^2}R_k>2x,R_k\ge x/2\text{ for all }k\ge1\Bigr\}
&\le& \P_x\Bigl\{\max_{k\le \delta x^2}M_k>(1-c_1\delta)x\Bigr\},
\end{eqnarray*}
where $c_1:=4\theta\E\tau$. 
Applying the Doob inequality to the right hand side
and noting that $\E_x M_k^2\le c_2k$ for all $k$ and $x$, 
we obtain 
\begin{eqnarray*}
\P_x\Bigl\{\max_{k\le \delta x^2}R_k>2x,R_k\ge x/2\text{ for all }k\ge1\Bigr\}
&\le& \frac{c_2 \delta}{(1-c_1\delta)^2}.
\end{eqnarray*}
Plugging this estimate and \eqref{eq:lb2} into \eqref{eq:lb1}, we conclude that 
\begin{eqnarray*}
\P_x\{R_k\not\in [x/2,2x]\mbox{ for some }k\le \delta x^2\} &\le& 
\frac{1}{2^{\beta_0}}+\frac{c_2 \delta}{(1-c_1\delta)^2}
\quad\mbox{for all }x\ge 2x_0.
\end{eqnarray*}
Choosing $\delta>0$ sufficiently small, we can make the right hand side
less than $1$, hence 
\begin{eqnarray*}
\inf_{x\ge 2x_0}\P_x\{R_k\in [x/2,2x]\mbox{ for all }k\le \delta x^2\} &>& 0.
\end{eqnarray*}
This completes the proof of the lower bound.

\section{Appendix}
\label{sec:appendix}

\begin{lemma}\label{l:p.V.maj.o}
Let $\alpha\in(0,1]$ and $\gamma\ge\alpha$. 
Let a family of positive random variables $\{\xi_\theta,\ \theta\in\Theta\}$
possess a majorant $\Xi$ with $\gamma+1-\alpha$ moment finite, 
that is, $\E\Xi^{\gamma+1-\alpha}<\infty$ and
$$
\xi_\theta\ \le_{st}\ \Xi\quad\mbox{for all }\theta\in\Theta.
$$
Then there exists a decreasing integrable at infinity function $p(x)$ such that
$$ 
\sup_{\theta\in\Theta}\E\{\xi_\theta^{\gamma+1};\ \xi_\theta\le x\}
=o(x^{1+\alpha} p(x))\quad\text{as }x\to\infty.
$$
\end{lemma}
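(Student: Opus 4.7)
The plan is to use stochastic dominance to reduce the supremum to a quantity depending only on the majorant $\Xi$, then convert the moment hypothesis $\E\Xi^{\gamma+1-\alpha}<\infty$ into the required $o(x^{1+\alpha}p(x))$ decay with a decreasing integrable function $p$.

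First, using the identity $y^{\gamma+1}=(\gamma+1)\int_0^y t^\gamma dt$ and Fubini, I would write, for each $\theta$,
$$\E\{\xi_\theta^{\gamma+1};\ \xi_\theta\le x\}\ =\ (\gamma+1)\int_0^x t^\gamma\P\{t<\xi_\theta\le x\}dt\ \le\ (\gamma+1)\int_0^x t^\gamma\P\{\Xi>t\}dt\ =:\ (\gamma+1)A(x),$$
where the inequality uses $\P\{\xi_\theta>t\}\le\P\{\Xi>t\}$, so the bound is uniform in $\theta\in\Theta$.

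Next I would translate the moment condition: $\E\Xi^{\gamma+1-\alpha}<\infty$ is equivalent to integrability of $h(t):=t^{\gamma-\alpha}\P\{\Xi>t\}$ on $[0,\infty)$ (note $\gamma-\alpha\ge 0$, so $h$ is well defined). Writing $A(x)=\int_0^x t^\alpha h(t)dt$ and splitting at $x/2$, the part on $[0,x/2]$ is at most $(x/2)^\alpha\int_0^\infty h\,dt=O(x^\alpha)$ while the part on $[x/2,x]$ is at most $x^\alpha\int_{x/2}^x h\,dt=x^\alpha\cdot o(1)$ by the tail integrability of $h$. Dividing by $x^{1+\alpha}$ gives $g(x):=A(x)/x^{1+\alpha}\to 0$. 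A Fubini swap in $\int_1^\infty x^{-1-\alpha}A(x)dx$, using $\int_{\max(1,t)}^\infty x^{-1-\alpha}dx\le(1+t^{-\alpha})/\alpha$, shows that $g$ itself is integrable at infinity, again by the moment assumption on $\Xi$.

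Finally, I would invoke the majorization result used elsewhere in the paper (in the spirit of Lemma \ref{l:g.fin.p.2}) to produce a decreasing integrable-at-infinity function $p$ with $g(x)=o(p(x))$, which yields $(\gamma+1)A(x)=o(x^{1+\alpha}p(x))$ and hence the claim. The main obstacle is precisely this last step: one cannot simply take the non-increasing envelope of $g$, since that may fail to be integrable even when $g$ is. The workaround I have in mind is a dyadic construction based on the tail integral $G(x)=\int_x^\infty g(y)dy$ (which is finite and tends to $0$), making $p$ piecewise constant on intervals chosen so that $G$ roughly halves; by thinning these intervals appropriately, $p$ can be made decreasing, integrable, and large enough to dominate $g$ with vanishing ratio. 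This dyadic construction is the only non-routine ingredient of the argument.
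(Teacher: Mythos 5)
Your first two steps match the paper: Fubini plus stochastic domination gives the uniform bound by $(\gamma+1)A(x)$ with $A(x)=\int_0^x t^\gamma\P\{\Xi>t\}\,dt$, and the Fubini swap showing $g(x):=A(x)/x^{1+\alpha}$ is integrable at infinity is correct. The gap is precisely where you say it is, and it is not merely a matter of writing out a construction: the statement ``given $g\ge 0$ with $g(x)\to0$ and $\int^\infty g<\infty$ there is a decreasing integrable $p$ with $g=o(p)$'' is \emph{false} in general. Take $g$ equal to $1/n$ on $[n!,\,n!+1/n]$ and $0$ elsewhere; then $g\to0$ and $\int g=\sum n^{-2}<\infty$, but any decreasing $p$ with $g(x)/p(x)\to0$ would have $p(n!)\gtrsim 1/n$, hence $\int_1^{n!}p\gtrsim (n!-1)/n\to\infty$. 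So a ``dyadic construction'' based only on $G(x)=\int_x^\infty g$ cannot work; it would have to exploit structure of your particular $g$, and you have not identified which structure or how.

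The structure is there, and once you see it the repair is short. Since $x^{1+\alpha}g(x)=A(x)$ is non-decreasing, for $y\in[x,2x]$ you have $g(y)=A(y)/y^{1+\alpha}\le A(2x)/x^{1+\alpha}=2^{1+\alpha}g(2x)$, whence $\bar g(x):=\sup_{y\ge x}g(y)\le 2^{1+\alpha}\sup_{k\ge1}g(2^kx)\le 2^{1+\alpha}\sum_{k\ge1}g(2^kx)$, and $\int_1^\infty\sum_{k\ge1}g(2^kx)\,dx=\sum_{k\ge1}2^{-k}\int_{2^k}^\infty g\le\int_1^\infty g<\infty$. So the non-increasing envelope $\bar g$ \emph{is} integrable for your $g$ (contrary to your worry), and Lemma~\ref{l:g.fin.p} applied to $\bar g$ finishes the proof. (You cite Lemma~\ref{l:g.fin.p.2}, but the relevant lemma is~\ref{l:g.fin.p}.) The paper's own route avoids the issue entirely and is cleaner: it applies Markov's inequality inside the integral, $t^\gamma\P\{\Xi>t\}\le t^\alpha\,\E\{\Xi^{\gamma-\alpha};\,\Xi>t\}$, replacing your $h(t)$ by the \emph{decreasing} function $\E\{\Xi^{\gamma-\alpha};\,\Xi>t\}$; the resulting $p(x)=x^{-1-\alpha}\int_0^x t^\alpha\E\{\Xi^{\gamma-\alpha};\Xi>t\}\,dt$ is then shown decreasing by a one-line derivative computation, and Lemma~\ref{l:g.fin.p} applies immediately. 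I recommend you adopt one of these two fixes rather than the unspecified dyadic scheme.
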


\begin{proof}
Integration by parts yields that
\begin{eqnarray*}
\E\{\xi_\theta^{\gamma+1};\ \xi_\theta\le x\}
&=& -\int_0^x y^{\gamma+1}d\P\{\xi_\theta>y\}\\
&=& -x^{\gamma+1}\P\{\xi_\theta>x\}
+(\gamma+1)\int_0^x y^\gamma\P\{\xi_\theta>y\}dy\\
&\le& (\gamma+1)\int_0^x y^\gamma\P\{\Xi>y\}dy,
\end{eqnarray*}
by the majorisation condition. Therefore, by the Markov inequality,
\begin{eqnarray*}
\E\{\xi_\theta^{\gamma+1};\ \xi_\theta\le x\}
&\le& (\gamma+1)\int_0^x y^\alpha\E\{\Xi^{\gamma-\alpha};\ \Xi>y\}dy\\
&=& (\gamma+1)x^{1+\alpha} p(x),
\end{eqnarray*}
where
$$
p(x)\ :=\ \frac{1}{x^{1+\alpha}}\int_0^x y^\alpha\E\{\Xi^{\gamma-\alpha};\ \Xi>y\}dy.
$$
The finiteness of $\E\Xi^{\gamma+1-\alpha}$ implies integrability at infinity of $p(x)$.
Indeed,
\begin{eqnarray*}
\int_0^\infty p(x)dx &=& \int_0^\infty \frac{dx}{x^{1+\alpha}}
\int_0^x y^\alpha \E\{\Xi^{\gamma-\alpha};\ \Xi>y\}dy\\
&=& \int_0^\infty y^\alpha\E\{\Xi^{\gamma-\alpha};\ \Xi>y\}dy 
\int_y^\infty\frac{dx}{x^{1+\alpha}}\\
&=& \frac{1}{\alpha}\int_0^\infty \E\{\Xi^{\gamma-\alpha};\ \Xi>y\}dy\\
&=& \frac{\E\Xi^{\gamma+1-\alpha}}{\alpha}\ <\ \infty,
\end{eqnarray*}
by the moment condition on $\Xi$.
In addition, the function $p(x)$ is decreasing because
\begin{eqnarray*}
\lefteqn{\frac{d}{dx}\frac{1}{x^{1+\alpha}}
\int_0^x y^\alpha\E\{\Xi^{\gamma-\alpha};\ \Xi>y\}dy} \\
&&\hspace{10mm}=\ -\frac{1+\alpha}{x^{2+\alpha}}
\int_0^x y^\alpha\E\{\Xi^{\gamma-\alpha};\ \Xi>y\}dy
+\frac{1}{x}\E\{\Xi^{\gamma-\alpha};\ \Xi>x\}\\
&&\hspace{10mm}\le\ -\frac{1+\alpha}{x^{2+\alpha}}
\E\{\Xi^{\gamma-\alpha};\ \Xi>x\}\int_0^x y^\alpha dy
+\frac{1}{x}\E\{\Xi^{\gamma-\alpha};\ \Xi>x\}\\
&&\hspace{10mm}=\ 0.
\end{eqnarray*}
The proof is complete due to the next Lemma \ref{l:g.fin.p}.
\end{proof}

\begin{lemma}\label{l:g.fin.p}
Let $p(x)>0$ be a decreasing function which is integrable at infinity.
Then there exists a decreasing integrable at infinity
function $p_1(x)>0$ such that $p_1(x)/p(x)\to\infty$ as $x\to\infty$.
\end{lemma}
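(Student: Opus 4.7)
The plan is to start from the candidate
\[
f(x) \;:=\; \frac{p(x)}{\sqrt{P(x)}}, \qquad P(x):=\int_x^\infty p(y)\,dy,
\]
and then repair its only defect -- non-monotonicity -- by passing to a decreasing minorant. This $f$ already has two of the three properties we want, essentially for free: by the substitution $t=P(x)$, $dt=-p(x)\,dx$,
\[
\int_a^\infty f(x)\,dx \;=\; \int_0^{P(a)}\frac{dt}{\sqrt t} \;=\; 2\sqrt{P(a)}\;<\;\infty,
\]
so $f$ is integrable at infinity, and $f(x)/p(x)=1/\sqrt{P(x)}\to\infty$. What $f$ lacks is monotonicity: $1/\sqrt{P(x)}$ is increasing while $p(x)$ is decreasing, and on any interval where $p$ is nearly constant the product $f$ strictly increases.

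To repair monotonicity I would take
\[
p_1(x) \;:=\; \inf_{0\le y\le x} f(y).
\]
By construction this is nonincreasing, it lies pointwise below $f$ (plug in $y=x$) hence is integrable at infinity, and it is strictly positive: on $[0,x]$ one has $f(y)\ge p(x)/\sqrt{P(0)}>0$, so $p_1(x)\ge p(x)/\sqrt{P(0)}$.

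The remaining task is to show $p_1(x)/p(x)\to\infty$. The approach is to pick, for each $x$, a point $y^*(x)\in[0,x]$ realising (or approximately realising) the infimum defining $p_1(x)$, so that $p_1(x)=p(y^*(x))/\sqrt{P(y^*(x))}$; since $y^*\le x$ and $p$ is decreasing, $p(y^*)\ge p(x)$, giving
\[
\frac{p_1(x)}{p(x)} \;\ge\; \frac{1}{\sqrt{P(y^*(x))}}.
\]
Everything then reduces to proving $y^*(x)\to\infty$, and I expect this to be the only genuinely delicate step. The idea for that step is to exploit that $p_1$ is nonincreasing and integrable at infinity, hence $p_1(x)\to 0$; if instead $y^*(x_n)\le M$ along some sequence $x_n\to\infty$, then $p_1(x_n)=f(y^*(x_n))\ge p(M)/\sqrt{P(0)}>0$ (using $p$ decreasing to bound $p(y^*)\ge p(M)$ and $P(y^*)\le P(0)$), which would contradict $p_1\to 0$. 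Hence $y^*(x)\to\infty$, $P(y^*(x))\to 0$, and $p_1/p\to\infty$ as required.
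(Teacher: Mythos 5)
Your proposal is correct, and the construction is genuinely different from the paper's. The paper proceeds discretely: it chooses a rapidly growing sequence $x_k$ with $\int_{x_k}^\infty p\,dy\le 1/k^2$, sets $p_1(x_k):=(k+1)p(x_k)$, interpolates piecewise linearly, and then verifies monotonicity (via a further thinning of the $x_k$) and integrability (from the $1/k^2$ tail bound) by hand; the factor $k+1$ inserts the growth of $p_1/p$ by fiat. You instead exploit the calculus identity $\int_a^\infty p/\sqrt{P}=2\sqrt{P(a)}$ with $P(x):=\int_x^\infty p$, so that $f=p/\sqrt{P}$ is integrable and $f/p=1/\sqrt{P}\to\infty$ come for free, and then restore monotonicity by passing to the running infimum $p_1(x)=\inf_{y\le x}f(y)$. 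The only nontrivial step is re-deriving the growth of $p_1/p$ after taking the infimum, and your argument for that is sound: since $p_1$ is nonincreasing and integrable it tends to $0$; if $y$'s attaining (or $2p_1(x)$-approximately attaining) the infimum stayed in a bounded set $[0,M]$, then $f(y)\ge p(M)/\sqrt{P(0)}$ would keep $p_1$ bounded below, a contradiction; hence the near-minimizers $y^*(x)\le x$ escape to $\infty$, $P(y^*(x))\to 0$, and $p_1(x)/p(x)\ge p(y^*(x))/\bigl(2p(x)\sqrt{P(y^*(x))}\bigr)\ge 1/\bigl(2\sqrt{P(y^*(x))}\bigr)\to\infty$, using $p(y^*)\ge p(x)$. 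The small imprecision you flag -- that the infimum may not be attained -- is harmless exactly as you indicate: replace ``realising'' by ``$f(y^*(x))\le 2p_1(x)$'' throughout. Net effect: your route trades the paper's combinatorial bookkeeping for a clean analytic device, and the ratio-growth emerges as a consequence of integrability rather than being built in.
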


\begin{proof}
Since $p(x)$ is integrable at infinity,
there exists an increasing sequence $n_k\to\infty$,
$k\ge 0$, such that $x_0=0$ and
$$
\int_{x_k}^\infty p(y)dy\ \le\ 1/k^2\quad\mbox{for all }k\ge 1.
$$
Define a continuous function $p_1(x)$ as follows:
\begin{eqnarray*}
p_1(x_k) &:=& (k+1)p(x_k),
\end{eqnarray*}
between $x_k$ and $x_{k+1}$ we define $p_1(x)$ piece-wise linearly.
Then the function $p_1(x)$ satisfies the condition
$p_1(x)/p(x)\to\infty$ as $x\to\infty$.
Since $p(x)$ decreases, the sequence $x_k$ may be chosen in such a way that
$$
(k+2)p(x_{k+1})\ <\ (k+1)p(x_k)\quad\mbox{for all }k\ge 1,
$$
which guarantees that the function $p_1(x)$ is decreasing.
In addition, its integral may be bounded as follows:
\begin{eqnarray*}
\int_0^\infty p(x)g(x)dx &\le& \sum_{k=0}^\infty(k+1)
\int_{n_k}^{n_{k+1}}p(x)dx\\
&=& \sum_{k=0}^\infty\int_{n_k}^\infty p(x)dx
\ \le\ \int_0^\infty p(x)dx+\sum_{k=1}^\infty 1/k^2\ <\ \infty,
\end{eqnarray*}
which completes the proof.
\end{proof}

\begin{lemma}[Denisov \cite{D2006}]\label{l:denis}
Let $p(x)>0$ be a decreasing function which is integrable at infinity.
Then there exists a decreasing integrable at infinity function $p_1(x)>0$
which dominates $p(x)$ and is regularly varying at infinity with index $-1$.
\end{lemma}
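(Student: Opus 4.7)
The plan is to look for $p_1$ in the form $p_1(x)=L(x)/x$ with $L$ slowly varying, since this is exactly what regular variation of index $-1$ means. Passing to logarithmic coordinates $s=\log x$ and writing $g(s):=e^s p(e^s)$, the task becomes: construct $\ell:[0,\infty)\to(0,\infty)$ which is slowly varying in the additive sense (i.e.\ $\ell(s+c)/\ell(s)\to 1$ for every fixed $c$), dominates $g$, decreases, and is integrable. The first observation is that $p$ decreasing and integrable forces $xp(x)\to 0$ (from $(x/2)p(x)\le \int_{x/2}^x p$), so $g(s)\to 0$ and $\int_0^\infty g(s)\,ds=\int_1^\infty p(y)\,dy<\infty$; moreover $p$ decreasing means $\log g$ has slope at most $1$ in $s$ wherever differentiable.

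The construction proceeds by choosing a sparse sequence of peak points $s_k\to\infty$ with $\sup_{t\ge s_k}g(t)\le 1/k^2$ (possible because the upper envelope of $g$ tends to $0$), and then defining $M(s):=\log\ell(s)$ piecewise so that $M(s_k)=-2\log k$, $M$ dips down in between to a minimum of order $-2\log k-\epsilon_k(s_{k+1}-s_k)/2$ and rises back up, all with $|M'|$ bounded by a sequence $\epsilon_k\to 0$. Because $\log g$ has slope at most $1$ while $M$ has slope strictly less than $1$, any such $M$ that matches or overshoots $\log g$ at the $s_k$ automatically dominates $\log g$ on every inter-peak interval (a triangle-inequality comparison); slow variation of $\ell=e^M$ follows from $|M'|\to 0$; and a direct computation gives
\begin{equation*}
\int_{s_k}^{s_{k+1}} e^{M(s)}\,ds \; \asymp \; \frac{s_{k+1}-s_k}{k^2\epsilon_k}\bigl(1-e^{-\epsilon_k(s_{k+1}-s_k)/2}\bigr),
\end{equation*}
which for a choice like $\epsilon_k=1/\log k$ and sparsity $s_{k+1}-s_k$ as large as $\log(x_{k+1}/x_k)$ can be, sums to a finite quantity of order $\sum\log k/k^2$.

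The main obstacle is the delicate simultaneous calibration: the sparsity of $\{s_k\}$ must be large enough that $\sup_{t\ge s_k}g(t)\le 1/k^2$ (controlled by the tail decay of $g$, hence by $\int g<\infty$), the slope bounds $\epsilon_k$ must tend to $0$ (for slow variation) yet not so fast that $M$ fails to dip down enough for $\ell$ to be integrable when $s_{k+1}-s_k$ is large, and the margin below slope $1$ for $M$ must be maintained everywhere (for domination). The integrability of $p$ supplies exactly the slack needed: the finiteness of $\sum_k 1/k^2$ bounding the peak heights means the total mass $\int g$ can be distributed over intervals of any length while keeping $\ell$ integrable; in the worst case one first applies Lemma~\ref{l:g.fin.p} to replace $p$ by a decreasing integrable majorant with better-behaved tail. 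Decrease of $p_1$ follows from $(L/x)'=-(1+\eta(\log x))L/x^2<0$ where $\eta(s):=-M'(s)$, and one finishes by multiplying by an absolute constant to pass from domination of $g$ to domination of $xp(x)$ and extending $p_1$ as a constant on a bounded initial interval.
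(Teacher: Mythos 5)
The paper itself offers no proof of this lemma; it is stated as a citation to Denisov \cite{D2006}, so there is no ``paper's own proof'' to compare against. Your proposal is a serious attempt at an independent proof, but it has a genuine gap at its central step.

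The gap is in the domination argument. You claim that because $\log g$ has slope at most $1$ while $M$ has slope strictly less than $1$, any $M$ that matches $\log g$ at the peaks $s_k$ automatically dominates $\log g$ in between. This is false. The bound $(\log g)'(s)=1+\bigl(\log p(e^s)\bigr)'\le 1$ is one-sided: it constrains how fast $\log g$ can \emph{increase}, and hence, given a value of $\log g$ at some $s_2$, yields only a \emph{lower} bound on $\log g(s)$ for $s<s_2$. It never produces an upper bound on $\log g$ beyond what the peak condition already gives, namely $\log g(s)\le -2\log k$ for $s\ge s_k$. Nothing prevents $\log g$ from sitting essentially at $-2\log k$ on almost all of $[s_k,s_{k+1})$ and dropping to $-2\log(k+1)$ only in the last moment (a sharp decrease is permitted since the slope is only bounded above). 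If $M$ dips strictly below $-2\log k$ in the interior of $[s_k,s_{k+1}]$, as your construction requires for integrability, then $M<\log g$ there and domination fails. A two-line counterexample: take $\log g\equiv -2\log k$ on $[s_k,s_{k+1}-\delta]$; your $M$ dips below $-2\log k$ at the midpoint while $\log g$ does not. The tension you correctly identify---that keeping $M$ flat at $-2\log k$ on $[s_k,s_{k+1}]$ may destroy $\int e^M<\infty$ when $s_{k+1}-s_k$ grows too fast, while dipping threatens domination---is exactly the delicate point, and the ``triangle-inequality comparison'' does not resolve it. What is needed and not used is quantitative information from $\int_{s_k}^{s_{k+1}}g<\infty$: if $g$ stays near $1/k^2$ for a long stretch then that stretch already carries a large chunk of the finite integral $\int g$, which limits how long the ``plateau'' can be and thereby makes a controlled dip safe. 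Without that bookkeeping, the proof does not go through.

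Two smaller points. First, the displayed estimate for $\int_{s_k}^{s_{k+1}}e^{M}$ has a spurious extra factor of $(s_{k+1}-s_k)$; the correct order is $\frac{1}{k^2\epsilon_k}\bigl(1-e^{-\epsilon_k(s_{k+1}-s_k)/2}\bigr)$, which still sums if $\epsilon_k=1/\log k$, so this is only a slip. Second, invoking Lemma~\ref{l:g.fin.p} to ``replace $p$ by a decreasing integrable majorant with better-behaved tail'' goes in the wrong direction for this purpose: Lemma~\ref{l:g.fin.p} produces a \emph{larger} function $p_1$ with $p_1/p\to\infty$, which only makes the majorization you seek harder; it does not regularize the tail in a way your argument can exploit.
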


\begin{lemma}\label{l:maj.p.e.V}
Let $\xi\ge 0$ be a random variable and let $V(x)\ge 0$
be an increasing function such that $\E V(\xi)<\infty$.
Let $U(x)\ge 0$ be a function such that
the function $f(x):=V(x)/xU(x)$ increases and satisfies the condition
\begin{eqnarray}\label{con.on.f}
\sup_{x>1}\frac{f(2x)}{f(x)} &<& \infty.
\end{eqnarray}
Then there exists an increasing
function $s(x)\to\infty$ of order $o(x)$ such that
\begin{eqnarray*}
\E\{U(\xi);\ \xi>s(x)\} &=& o(p(x)xU(x)/V(x))
\quad\mbox{as }x\to\infty,
\end{eqnarray*}
where $p(x)$ is a decreasing integrable at infinity function
which is only determined by $\xi$ and $V(x)$.
\end{lemma}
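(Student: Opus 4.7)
The plan is to exploit the identity $U(y)=V(y)/(yf(y))$ together with the monotonicity and doubling of $f$, and thereby reduce the tail expectation of $U(\xi)$ to an integrable tail quantity derived from $V(\xi)$.

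First, I would use the monotonicity of $f$: on $\{\xi>s(x)\}$ we have $f(\xi)\ge f(s(x))$, whence $U(\xi)=V(\xi)/(\xi f(\xi))\le V(\xi)/(\xi f(s(x)))$. Taking expectation gives
\begin{equation*}
\E\{U(\xi);\ \xi>s(x)\}\ \le\ \frac{\widetilde G(s(x))}{f(s(x))},\qquad
\widetilde G(y):=\E\{V(\xi)/\xi;\ \xi>y\}.
\end{equation*}
By Fubini, $\int_0^\infty \widetilde G(y)\,dy = \E V(\xi)<\infty$, so $\widetilde G$ is decreasing and integrable at infinity, with $\widetilde G(y)\downarrow 0$ as $y\to\infty$; crucially, it depends only on the law of $\xi$ and on $V$.

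Second, I would iterate the doubling condition. Setting $K:=\sup_{y>1}f(2y)/f(y)$ and $n:=\lceil\log_2(x/s(x))\rceil$, we obtain $f(x)\le K^n f(s(x))\le K(x/s(x))^{\log_2 K}f(s(x))$. Multiplying the previous inequality by $f(x)=V(x)/(xU(x))$ yields
\begin{equation*}
\frac{V(x)}{xU(x)}\,\E\{U(\xi);\ \xi>s(x)\}\ \le\ K\Bigl(\frac{x}{s(x)}\Bigr)^{\log_2 K}\widetilde G(s(x)).
\end{equation*}
So it suffices to produce a decreasing integrable $p$ (depending only on $\xi,V$) and an increasing $s(x)\to\infty$ with $s(x)=o(x)$ for which the right-hand side is $o(p(x))$.

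Third, I would construct $p$ and $s$. Applying Lemma~\ref{l:g.fin.p} to $\widetilde G$ produces a decreasing integrable function $p(x)>0$, determined by $\xi$ and $V$, with $p(y)/\widetilde G(y)\to\infty$; by invoking Lemma~\ref{l:denis} I would furthermore arrange that $p$ is regularly varying at infinity of index $-1$, so that $p(x/a)/p(x)\sim a$ for any slowly varying choice $a=a(x)$. Inspecting the construction in Lemma~\ref{l:g.fin.p}, the sequence $x_k$ there can be chosen so that the ratio $\widetilde G(y)/p(y)$ decays at any prescribed rate. With $s(x):=x/a(x)$ I would rewrite the bound as
\begin{equation*}
K\Bigl(\frac{x}{s(x)}\Bigr)^{\log_2 K}\widetilde G(s(x))\ \le\
K\,a(x)^{\log_2 K}\cdot\frac{p(x/a(x))}{p(x)}\cdot\frac{\widetilde G(x/a(x))}{p(x/a(x))}\cdot p(x),
\end{equation*}
which is $o(p(x))$ provided $a(x)^{1+\log_2 K}\cdot (\widetilde G/p)(x/a(x))\to 0$. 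Since $(\widetilde G/p)(y)\to 0$ at a controllable rate (via the construction from Lemma~\ref{l:g.fin.p}) and $a(x)$ is at our disposal, this is achieved by taking $a(x)\to\infty$ sufficiently slowly relative to that decay, while keeping $a(x)=o(x)$ to preserve $s(x)=o(x)$.

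The main obstacle is precisely this coordination: because $s(x)=o(x)$ forces $a(x)\to\infty$, both the doubling factor $a(x)^{\log_2 K}$ and the regularly-varying factor $p(x/a)/p(x)\sim a$ push the product upward, and they must be overcome solely by the arbitrary-rate convergence $\widetilde G/p\to 0$ engineered from Lemma~\ref{l:g.fin.p}. Verifying that the $x_k$ in that construction can indeed be tuned to force $\widetilde G(y)/p(y)=o(\psi(y)^{-1})$ for a prescribed $\psi\to\infty$, and then selecting $a(x)$ so that $a(x)^{1+\log_2 K}\psi(x/a(x))^{-1}\to 0$ together with $a(x)=o(x)$, is the technical crux of the argument.
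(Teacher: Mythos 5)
Your first step reproduces the paper's exactly: monotonicity of $f$ gives $\E\{U(\xi);\ \xi>y\}\le \widetilde G(y)/f(y)$ with $\widetilde G(y)=\E\{V(\xi)/\xi;\ \xi>y\}$, and Fubini gives integrability of $\widetilde G$, so Lemmas \ref{l:g.fin.p} and \ref{l:denis} furnish a regularly varying $p$ of index $-1$ with $\widetilde G=o(p)$. Where you diverge is in passing from the cutoff $y=x$ to a cutoff $s(x)=o(x)$. You parametrize $s(x)=x/a(x)$ with $a(x)\to\infty$, iterate the doubling condition an unbounded number of times to bound $f(x)/f(s(x))$ by $Ka(x)^{\log_2 K}$, and then demand a quantitative rate for $\widetilde G/p\to 0$ strong enough to beat $a(x)^{1+\log_2 K}$. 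You correctly identify this as the technical crux, but you leave it unresolved; so as written there is a genuine gap, even though it is closeable with effort (e.g.\ choosing $a(x)\le\min\bigl(x^{1/2},\,(\widetilde G/p)(x^{1/2})^{-1/(2(1+\log_2 K))}\bigr)$ would work).

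The paper's route avoids this entirely and is worth contrasting. Rather than letting the cutoff shrink continuously, the paper first proves, \emph{for each fixed integer $n$}, that $\E\{U(\xi);\ \xi>x/n\}=o(p(x)/f(x))$. For fixed $n$ the two factors you worry about are harmless: $p(x/n)/p(x)\to n$ is bounded by regular variation of index $-1$, and $f(x)/f(x/n)$ is bounded by applying the doubling condition only $\lceil\log_2 n\rceil$ times. No rate information about $\widetilde G/p$ is needed. The statement for varying $s(x)=o(x)$ then follows by a standard diagonal extraction: pick $x_n\uparrow\infty$ so that $\E\{U(\xi);\ \xi>x/n\}\le (1/n)p(x)/f(x)$ for $x\ge x_n$, and let $n(x)$ grow through the integers accordingly. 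So the lesson is that the quantitative coordination you flag as the crux is an artifact of trying to make $s(x)$ explicit; the fixed-$n$-then-diagonalize scheme delivers the conclusion with no rate control whatsoever and is the intended argument.
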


\begin{proof}
Since $\E V(\xi)<\infty$, the decreasing function
$$
p_1(x)\ :=\ \E\{V(\xi)/\xi;\ \xi>x\}
$$
is integrable at infinity. Then by Lemmas \ref{l:g.fin.p} and \ref{l:denis},
$$
\E\{V(\xi)/\xi;\ \xi>x\}\ =\ o(p(x))\quad\mbox{as }x\to\infty,
$$
where a decreasing function $p(x)$ is integrable and regularly varying
at infinity with index $-1$. Hence, due to the increase of $V(x)/xU(x)$,
\begin{eqnarray*}
\E\{U(\xi);\ \xi>x\} &=&
\E\Bigl\{\frac{U(\xi)\xi}{V(\xi)} V(\xi)/\xi;\ \xi>x\Bigr\}\\
&\le& \frac{\E\{V(\xi)/\xi;\ \xi>x\}}{V(x)/xU(x)}\\
&=& o(p(x)xU(x)/V(x))\quad\mbox{as }x\to\infty.
\end{eqnarray*}
Therefore, for any $n\in\N$,
\begin{eqnarray*}
\E\{U(\xi);\ \xi>x/n\}\ =\ o(p(x)xU(x)/V(x))
\quad\mbox{as }x\to\infty
\end{eqnarray*}
because the function $p(x)$ is regularly varying at infinity
and owing to \eqref{con.on.f}.
This implies existence of level $s(x)=o(x)$
which delivers the stated result.
\end{proof}

\begin{lemma}\label{l:maj.p.e}
Let $\xi\ge 0$ be a random variable with finite $\gamma$th moment
for some $\gamma\in[1,\infty)$. Let $\alpha\in[1/\gamma,1]$.
Then there exists an increasing
function $s(x)\to\infty$ of order $o(x^\alpha)$ such that,
for all $\beta\in[0,\gamma-1/\alpha]$,
\begin{eqnarray*}
\E\{\xi^\beta;\ \xi>s(x)\} &=& o(p(x)/x^{\alpha(\gamma-\beta)-1})
\quad\mbox{as }x\to\infty,
\end{eqnarray*}
where $p(x)$ is a decreasing integrable at infinity function
which is only determined by $\xi$, $\gamma$, and $\alpha$.
\end{lemma}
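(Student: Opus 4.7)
The plan is to reduce to Lemma \ref{l:maj.p.e.V} via the substitution $\eta:=\xi^{1/\alpha}$, so that the target rate $s(x)=o(x^\alpha)$ of the present lemma corresponds, upon setting $s(x):=s_0(x)^\alpha$, to the rate $s_0(y)=o(y)$ delivered by Lemma \ref{l:maj.p.e.V}. I apply that lemma to $\eta$ with the choices $U(x):=x^{\alpha\beta}$ and $V(x):=x^{\alpha\gamma}$. Then $\E V(\eta)=\E\xi^\gamma<\infty$ by hypothesis, the ratio
$$
f(x)\ :=\ V(x)/(xU(x))\ =\ x^{\alpha(\gamma-\beta)-1}
$$
is non-decreasing precisely because $\beta\le\gamma-1/\alpha$ is equivalent to $\alpha(\gamma-\beta)\ge 1$, and $f(2x)/f(x)=2^{\alpha(\gamma-\beta)-1}\le 2^{\alpha\gamma-1}$ is bounded uniformly in $\beta$. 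Crucially, $V$ does not depend on $\beta$, so the function $p$ produced by Lemma \ref{l:maj.p.e.V}---built from $p_1(y):=\E\{V(\eta)/\eta;\eta>y\}=\E\{\xi^{\gamma-1/\alpha};\xi>y^\alpha\}$---depends only on $\xi$, $\alpha$, and $\gamma$.

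Lemma \ref{l:maj.p.e.V} then yields $s_0(y)\to\infty$ of order $o(y)$ with
$$
\E\{U(\eta);\eta>s_0(y)\}\ =\ o\bigl(p(y)yU(y)/V(y)\bigr)\ =\ o\bigl(p(y)/y^{\alpha(\gamma-\beta)-1}\bigr)
\quad\mbox{as }y\to\infty.
$$
Setting $s(x):=s_0(x)^\alpha$, which satisfies $s(x)=o(x^\alpha)$, and using $\{\xi>s(x)\}=\{\eta>s_0(x)\}$ together with $U(\eta)=\xi^\beta$, this translates directly into
$$
\E\{\xi^\beta;\xi>s(x)\}\ =\ o\bigl(p(x)/x^{\alpha(\gamma-\beta)-1}\bigr),
$$
which is the desired conclusion.

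The main obstacle is to produce a \emph{single} $s_0$ for which the conclusion holds uniformly over all $\beta\in[0,\gamma-1/\alpha]$. I will handle this by re-inspecting the proof of Lemma \ref{l:maj.p.e.V}: the key estimate $\E\{U(\eta);\eta>y\}\le p_1(y)/f(y)$ has $p_1$ independent of $\beta$, so the factor $p_1(y)/p(y)\to 0$ is uniform in $\beta$; and the scaling ratio $f(y)/f(y/n)=n^{\alpha(\gamma-\beta)-1}\le n^{\alpha\gamma-1}$ is uniformly bounded in $\beta$ for each fixed $n$. Combining these, one obtains $\E\{U(\eta);\eta>y/n\}\le 2n^{\alpha\gamma}\varepsilon(y/n)\,p(y)/f(y)$ with $\varepsilon(\cdot)\to 0$ independent of $\beta$. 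A standard diagonal construction---choosing $s_0(y):=y/n$ on $[y_n,y_{n+1})$ with $y_n$ increasing fast enough that the uniform-in-$\beta$ error falls below $1/n$, followed by monotonization---then produces the required single $s_0=o(y)$, and hence $s(x)=s_0(x)^\alpha=o(x^\alpha)$, delivering the lemma.
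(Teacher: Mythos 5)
Your proof follows the same route as the paper's: the substitution $\eta=\xi^{1/\alpha}$, the choices $V(x)=x^{\alpha\gamma}$ and $U(x)=x^{\alpha\beta}$, and reduction to Lemma~\ref{l:maj.p.e.V}, followed by rescaling $s(x)=s_0(x)^\alpha$. The verifications that $f(x)=V(x)/(xU(x))=x^{\alpha(\gamma-\beta)-1}$ is non-decreasing (from $\beta\le\gamma-1/\alpha$) and that condition \eqref{con.on.f} holds are correct, and you rightly note that $V$, hence $p$, does not depend on $\beta$.

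Where your write-up actually improves on the paper's is the uniformity discussion. The statement of Lemma~\ref{l:maj.p.e} asks for a \emph{single} $s(x)$ valid for all $\beta\in[0,\gamma-1/\alpha]$, while Lemma~\ref{l:maj.p.e.V} produces a level $s$ depending on the particular $U$, i.e.\ on $\beta$. The paper's proof simply cites Lemma~\ref{l:maj.p.e.V} with $U(x)=x^{\alpha\beta}$ without comment, leaving the uniformity over $\beta$ implicit. Your argument closes that gap: the pointwise estimate $\E\{U(\eta);\,\eta>y\}\le p_1(y)/f(y)$ with $p_1(y)=\E\{\xi^{\gamma-1/\alpha};\,\xi>y^\alpha\}$ is $\beta$-free, the ratio $p_1/p\to0$ is therefore uniform in $\beta$, the scaling factor $f(y)/f(y/n)=n^{\alpha(\gamma-\beta)-1}\le n^{\alpha\gamma-1}$ is uniformly bounded, and a diagonal choice of $s_0(y)=y/n$ on $[y_n,y_{n+1})$ (suitably monotonized) then gives a single $s_0=o(y)$ that works for every $\beta$. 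This is a correct and worthwhile refinement; the only thing to flag is that in the paper's own applications of Lemma~\ref{l:maj.p.e} a fixed $\beta$ is always used, so the gap is harmless there, but your version matches the statement as written.
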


\begin{proof}
Put $\eta=\xi^{1/\alpha}$ and $V(x)=x^{\alpha\gamma}$.
As follows from Lemma \ref{l:maj.p.e.V} with $U(x)=x^{\alpha\beta}$,
since $\E\xi^\gamma=\E V(\eta)<\infty$,
there exists a regularly varying at infinity
with index $-1$ function $p(x)$ which is integrable at infinity
and a function $s(x)=o(x)$ such that
\begin{eqnarray*}
\E\{\eta^{\alpha\beta};\ \eta>s(x)\} &=& o(p(x)xU(x)/V(x))\\
&=& o(p(x)/x^{\alpha(\gamma-\beta)-1})\quad\mbox{as }x\to\infty,
\end{eqnarray*}
which can be rewritten as
\begin{eqnarray*}
\E\{\xi^\beta;\ \xi>s^\alpha(x)\} &=& o(p(x)/x^{\alpha(\gamma-\beta)-1})
\quad\mbox{as }x\to\infty,
\end{eqnarray*}
and the proof is complete.
\end{proof}

\begin{lemma}\label{l:maj.p.e.log}
Let $\xi\ge 0$ be a random variable and let $V(x)$ be a non-negative function
such that $\E V(\xi)\log(1+\xi)<\infty$. Then there exists an increasing
function $s(x)\to\infty$ of order $o(x)$ such that,
\begin{eqnarray*}
\E\{V(\xi);\ \xi>s(x)\} &=& o(p(x)x)\quad\mbox{as }x\to\infty,
\end{eqnarray*}
where $p(x)$ is a decreasing integrable at infinity function.
\end{lemma}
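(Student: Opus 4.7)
The plan is to imitate the proof of Lemma \ref{l:maj.p.e.V}: extract a natural decreasing integrable function $p_0$ from the hypothesis, upgrade it via Lemmas \ref{l:g.fin.p} and \ref{l:denis}, and then pass from a pointwise bound at $x$ to one at $s(x)=o(x)$ by diagonal extraction.

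First I set $\overline F(x) := \E\{V(\xi);\ \xi>x\}$, a positive decreasing function with $\overline F(x)\to 0$. The log-moment hypothesis enters through Fubini:
$$
\int_1^\infty \frac{\overline F(x)}{x}\,dx
= \E\{V(\xi)\log\xi;\ \xi>1\}
\ \le\ \E V(\xi)\log(1+\xi)\ <\ \infty,
$$
so $p_0(x):=\overline F(x)/x$ is positive, decreasing (as the product of two non-negative decreasing functions) and integrable at infinity. Lemma \ref{l:g.fin.p} followed by Lemma \ref{l:denis} then supplies a decreasing integrable $p(x)$, regularly varying at infinity with index $-1$ and satisfying $p_0(x)/p(x)\to 0$; in particular $\overline F(x)=o(p(x)x)$ as $x\to\infty$.

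It remains to sharpen this to a bound at $s(x)=o(x)$. Because $L(x):=p(x)x$ is slowly varying, $L(x/n)/L(x)\to 1$ for every fixed $n\in\N$, whence
$$
\frac{\overline F(x/n)}{p(x)x}
= \frac{\overline F(x/n)}{p(x/n)(x/n)}\cdot\frac{L(x/n)}{L(x)}
\longrightarrow 0 \quad\mbox{as }x\to\infty.
$$
A standard diagonalisation---pick a sparse increasing sequence $x_k\uparrow\infty$ along which $\overline F(x/k)/(p(x)x)\le 1/k$ for all $x\ge x_k$, define $s(x):=x/k$ on $[x_k,x_{k+1})$, and make a cosmetic adjustment (e.g.\ hold $s$ constant across each boundary until $x/(k+1)$ catches up) to enforce monotonicity---produces an increasing $s(x)=o(x)$ with $\overline F(s(x))=o(p(x)x)$, which is the desired bound.

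The substantive analytic content is the Fubini identity that converts the log-moment hypothesis into the integrability of $p_0$; the only obstacle I foresee, though purely technical, is the diagonalisation and monotonisation of $s$ in the last step, which is a routine bookkeeping matter once the bound at $x$ is in hand.
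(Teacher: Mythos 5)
Your proof is correct and follows essentially the paper's own route: the Fubini identity converting the log-moment into integrability of $\overline F(x)/x$, the upgrade via Lemmas \ref{l:g.fin.p} and \ref{l:denis} to a regularly varying (index $-1$) integrable majorant $p$, and the diagonal extraction using slow variation of $p(x)x$ to pass from $\overline F(x)=o(p(x)x)$ to a level $s(x)=o(x)$. The paper compresses this last step into a reference to the concluding arguments of Lemma \ref{l:maj.p.e.V}, whereas you spell it out, but the content is the same.
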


\begin{proof}
It follows almost immediately because
\begin{eqnarray*}
\int_1^\infty \frac{\E\{V(\xi);\ \xi>x\}}{x}dx &=&
\int_1^\infty \frac{dx}{x}\int_x^\infty V(y)\P\{\xi\in dy\}\\
&=& \int_1^\infty V(y)\P\{\xi\in dy\}\int_1^y \frac{dx}{x}\\
&=& \int_1^\infty V(y)(\log y)\P\{\xi\in dy\}\ <\ \infty.
\end{eqnarray*}
Hence, by Lemmas \ref{l:g.fin.p} and \ref{l:denis},
\begin{eqnarray*}
\E\{V(\xi);\ \xi>x\} &=& o(p(x)x)\quad\mbox{as }x\to\infty,
\end{eqnarray*}
where a decreasing function $p(x)$ is integrable and regularly varying
at infinity with index $-1$. Then concluding arguments as
in Lemma \ref{l:maj.p.e.V} complete the proof.
\end{proof}

\begin{lemma}\label{l:g.fin.p.2}
Let $p(x)>0$ be a decreasing function which is integrable at infinity.
Then, for any $k\ge 1$, there exists a decreasing integrable at infinity
function $p_k(x)\ge p(x)$ such that it is $k$ times
differentiable and, for all $j\le k$,
$$
\frac{d^j}{dx^j}p_k(x)\ =\ O(1/x^{1+j})\quad\mbox{as }x\to\infty.
$$
\end{lemma}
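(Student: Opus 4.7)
\emph{Proof plan.} The plan is to take $p_k$ to be a rescaled mollification of $p$ living on the dyadic window $[x/2,x]$. Fix, once and for all, a $C^\infty$ function $\psi\colon\R\to[0,\infty)$ with $\operatorname{supp}\psi\subset(1/2,1)$ and $\int_{1/2}^{1}\psi(s)\,ds=1$, and define
\begin{eqnarray*}
p_k(x) &:=& \int_{1/2}^{1} p(xs)\,\psi(s)\,ds, \qquad x>0.
\end{eqnarray*}
Because $p$ is decreasing and $xs\le x$ for $s\le 1$, one gets $p_k(x)\ge p(x)$ for free, and the same monotonicity shows that $p_k$ itself is decreasing. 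On the other hand $p(xs)\le p(x/2)$ for $s\ge 1/2$, so $p_k(x)\le p(x/2)$, and a change of variables turns the integrability of $p$ at infinity into that of $p_k$.

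To handle the derivative bounds I would bypass the fact that $p$ itself is only assumed monotone by moving all smoothness onto $\psi$ via a Stieltjes representation. Write $\mu:=-dp$, a non-negative Radon measure on $(0,\infty)$; since $p(\infty)=0$ (which follows from $p$ being decreasing and integrable), one has $p(y)=\mu([y,\infty))$ and Fubini yields
\begin{eqnarray*}
p_k(x) &=& \int_{0}^{\infty} \Psi(t/x)\,\mu(dt),\qquad
\Psi(r)\ :=\ \int_{1/2}^{\min(r,1)}\psi(s)\,ds
\end{eqnarray*}
(with the convention $\Psi(r)=0$ for $r\le 1/2$). Then $\Psi\in C^\infty(\R)$, equals $0$ on $(-\infty,1/2]$ and $1$ on $[1,\infty)$, and each derivative $\Psi^{(i)}$ ($i\ge 1$) is bounded and supported in $[1/2,1]$.

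Differentiating $j$ times under the integral sign---legitimate because, for $x$ in a neighbourhood of any $x_0>0$, the $x$-derivatives of $\Psi(t/x)$ are bounded by a fixed $\mu$-integrable function supported on a compact subset of $(0,\infty)$---the chain rule produces an identity of the form
\begin{eqnarray*}
p_k^{(j)}(x) &=& \sum_{i=1}^{j}\,c_{j,i}\,x^{-(j+i)}\int_{0}^{\infty} t^i\,\Psi^{(i)}(t/x)\,\mu(dt)
\end{eqnarray*}
for certain universal combinatorial constants $c_{j,i}$. The support condition $t/x\in[1/2,1]$ restricts $t$ to $[x/2,x]$, where $t^i\le x^i$, and hence
\begin{eqnarray*}
|p_k^{(j)}(x)| &\le& C_j\,x^{-j}\,\mu([x/2,x])
\ =\ C_j\,x^{-j}\bigl(p(x/2)-p(x)\bigr).
\end{eqnarray*}
Finally, any decreasing integrable function satisfies $xp(x)\to 0$ at infinity, in particular $p(x/2)=o(1/x)$, whence $|p_k^{(j)}(x)|=o(1/x^{j+1})=O(1/x^{j+1})$, as required.

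The only genuinely delicate step is the differentiation under the integral, since $p$ need not be differentiable in any classical sense; the Stieltjes rewrite $p_k(x)=\int\Psi(t/x)\,\mu(dt)$ is precisely what makes the chain rule legitimate, after which the whole argument reduces to the elementary observation $xp(x)\to 0$.
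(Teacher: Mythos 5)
Your proof is correct, and it takes a genuinely different route from the paper's. The paper constructs $p_k$ as a $k$-fold iterated (Volterra-type) integral,
$$p_k(x) \ =\ 2^k\int_{x/2}^\infty dy_k\int_{y_k/2}^\infty dy_{k-1}\cdots\int_{y_2/2}^\infty\frac{p(y_1)}{y_1^k}\,dy_1,$$
so that each differentiation peels off exactly one integral with the lower limit $x/2^j$, leaving an explicit formula for $p_k^{(k)}$; the bound then reduces to the same elementary fact $p(x)=O(1/x)$, and the lower derivatives are handled by integrating back up. Your construction instead uses a dyadic-window mollification $p_k(x)=\int_{1/2}^1 p(xs)\psi(s)\,ds$ with a fixed smooth bump $\psi$, and the Stieltjes rewrite $p_k(x)=\int\Psi(t/x)\,\mu(dt)$ with $\mu=-dp$ is a clean device for transferring all differentiability from the merely monotone $p$ onto the smooth profile $\Psi$; the support condition $\operatorname{supp}\Psi^{(i)}\subset[1/2,1]$ for $i\ge1$ then localises every derivative to the annulus $t\in[x/2,x]$ and gives the bound via $\mu([x/2,x])\le p(x/2)=o(1/x)$. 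The payoff of your route is a single function that is $C^\infty$ (not just $C^k$) and works simultaneously for every $k$, with a slightly sharper $o(1/x^{1+j})$ estimate; the paper's construction is more elementary (no smooth bump functions, no Lebesgue--Stieltjes measure) but must be redone for each $k$. Both arguments ultimately come down to the same observation that $x\,p(x)\to0$ for a decreasing integrable $p$.
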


\begin{proof}
Consider a decreasing function $p_k(x)$ defined by the equality
\begin{eqnarray*}
p_k(x) &:=& 2^k\int_{x/2}^\infty dy_k\int_{y_k/2}^\infty dy_{k-1}
\ldots\int_{y_3/2}^\infty dy_2
\int_{y_2/2}^\infty\frac{p(y_1)}{y_1^k}dy_1.
\end{eqnarray*}
Firstly, since the function $p(x)/x^k$ decreases,
\begin{eqnarray*}
\int_{y_2/2}^\infty\frac{p(y_1)}{y_1^k}dy_1 &\ge&
\int_{y_2/2}^{y_2}\frac{p(y_1)}{y_1^k}dy_1
\ \ge\ \frac{y_2}{2}\frac{p(y_2)}{y_2^k}
\ =\ \frac{1}{2}\frac{p(y_2)}{y_2^{k-1}},
\end{eqnarray*}
so repetition of this lower bound eventually
leads to the inequalities
\begin{eqnarray*}
p_k(x) &\ge& 2^k\int_{x/2}^x \frac{1}{2^{k-1}}\frac{p(y_k)}{y_k}dy_k
\ \ge\ 2^k\frac{x}{2}\frac{1}{2^{k-1}}\frac{p(x)}{x}\ =\ p(x).
\end{eqnarray*}

Secondly, $p_k(x)$ is integrable at infinity because
\begin{eqnarray*}
\int_{y_2/2}^\infty\frac{p(y_1)}{y_1^k}dy_1 &\le&
p(y_2/2)\int_{y_2/2}^\infty\frac{1}{y_1^k}dy_1
\ =\ O\Bigl(\frac{p(y_2/2)}{y_2^{k-1}}\Bigr),
\end{eqnarray*}
and hence after $k-1$ steps we arrive at upper bound
\begin{eqnarray*}
p_k(x) &\le& c\int_{x/2}^\infty\frac{p(y_k/2^{k-1})}{y_k}dy_k,
\quad c<\infty,
\end{eqnarray*}
where the integral on the right hand side is integrable with respect
to $x$, since
\begin{eqnarray*}
\int_0^\infty dx\int_{x/2}^\infty\frac{p(y/2^{k-1})}{y}dy
&=& \int_0^\infty \frac{p(y/2^{k-1})}{y}dy\int_0^{2y} dx\\
&=& 2\int_0^\infty p(y/2^{k-1})dy\ <\ \infty.
\end{eqnarray*}

Thirdly,
\begin{eqnarray*}
\frac{d^k}{dx^k}p_k(x) &=&
-\frac{2^k}{2} \frac{d^{k-1}}{dx^{k-1}} \int_{x/4}^\infty dy_{k-1}
\ldots\int_{y_3/2}^\infty dy_2
\int_{y_2/2}^\infty\frac{p(y_1)}{y_1^k}dy_1\\
&\ldots& \\
&=& (-1)^k\frac{2^k}{2\cdot 4\cdot\ldots\cdot 2^k}
\frac{p(x/2^k)}{(x/2^k)^k}\ =\ O(p(x/2^k)/x^k)\quad\mbox{as }x\to\infty.
\end{eqnarray*}
Since $p(x)$ is decreasing and integrable at infinity,
$p(x)=O(1/x)$ as $x\to\infty$, so $p_k^{(k)}(x)=O(1/x^{1+k})$.
Integrating the $k$th derivative $k-j$ times we get that the $j$th
derivative of $p_k(x)$ is not greater than $(k-j)$th
integral of $c/x^{1+k}$ which is of order $O(1/x^{1+j})$.
This completes the proof.
\end{proof}

\end{document}